\newcommand{\picor}{{\rm Pic}_{\lbe X}^{\e 0\le,\e\red}}
\def\ms{\mathscr }
\def\msdiv{\mathscr Div}
\newcommand{\ksep}{k^{\e\rm s}}
\newcommand{\lcm}{{\rm lcm}}
\newcommand*{\defeq}{\mathrel{\vcenter{\baselineskip0.5ex \lineskiplimit0pt
			\hbox{\scriptsize.}\hbox{\scriptsize.}}}%
	=}
\def\lbe{\kern -.025em}
\def\picxs{{\rm Pic}_{\, X\!/\lbe S}}
\newcommand\ses{S_{\et}^{\e\sim}}
\def\br{{\rm{Br}}\e}
\def\bra{{\rm{Br}}_{\rm{a}}\le X}
\def\brap{{\rm{Br}}_{\rm{a}}\le\widetilde{X}}
\def\bro{{\rm{Br}}_{1}X}
\def\brop{{\rm{Br}}_{1}\widetilde{X}}
\def\brok{{\rm{Br}}_{\lle 1}X}
\def\brokp{{\rm{Br}}_{\lle 1}\widetilde{X}}
\def\picxs{{\rm Pic}_{\lbe X\!/\lbe S}}
\def\picxst{{\rm Pic}_{ \widetilde{X}\lbe\lbe/\lbe S}}
\def\picxkt{{\rm Pic}_{\widetilde{X}\be/k}}
\def\picxkn{{\rm Pic}_{X^{\be N}\!\be/k}}
\def\brxs{{\rm{Br}}_{\lbe X\be/\lbe S}}
\def\bxs{\br\lbe(\lbe X\!/\be S\e)}
\def\bys{\br(\le Y\be\lbe/\lbe S\le\lle)}
\def\bysp{\br(\le \widetilde{Y}\!/\be S\le\lle)}
\def\bxk{\br\lbe(\lbe X\be/ k\lle)}
\def\bxsp{\br\lbe(\lbe\widetilde{X}\be/\lbe S\le)}
\def\bxkp{\br\lbe\lle(\lbe\widetilde{X}\be/ k\le)}
\def\picxk{{\rm Pic}_{X\be/ k}}
\DeclareMathAlphabet{\mathbbmsl}{U}{bbm}{m}{sl}
\newcommand{\car}{{\rm char}\,}
\def\g{\varGamma}
\newcommand{\into}{\hookrightarrow}
\newcommand{\onto}{\twoheadrightarrow}
\newcommand{\isoto}{\overset{\!\sim}{\to}}
\def\kbar{\overline{k}}
\newcommand\red{{\rm red}}
\newcommand{\lra}{\longrightarrow}
\newcommand{\et}{{\rm {\acute et}}}
\def\be{\kern -.1em}
\def\le{\kern 0.03em}
\def\lle{\kern 0.017em}
\def\mle{\kern 0.009em}
\def\lbe{\kern -.05em}
\newcommand{\A}{{\mathbb A}}
\newcommand{\Z}{{\mathbb Z}}
\newcommand{\Q}{{\mathbb Q}}
\newcommand{\N}{{\mathbb N}}
\newcommand{\G}{{\mathbb G}}
\newcommand{\Pn}{{\mathbb P}}
\newcommand{\spec}{\mathrm{ Spec}\,}
\newcommand{\krn}{\mathrm{Ker}\,}
\newcommand{\img}{\mathrm{Im}\e\le}
\newcommand{\cok}{\mathrm{Coker}\,}
\def\e{\kern 0.08em}
\newcommand{\xs}{X^{\lle\rm{s}}}
\definecolor{labelkey}{rgb}{1,0,0}
\DeclareMathAlphabet{\mathcalligra}{T1}{calligra}{m}{n}
\numberwithin{equation}{section}
\def\pic{{\rm{Pic}}\e}
\definecolor{labelkey}{rgb}{1,0,0}
\numberwithin{equation}{section}
\newcommand\cO{{\mathcal O}}
\newtheorem{lemma}{Lemma}[section]
\newtheorem{theorem}[lemma]{Theorem}
\newtheorem{proposition-definition}[lemma]{Proposition-Definition}
\newtheorem{corollary}[lemma]{Corollary}
\newtheorem{proposition}[lemma]{Proposition}
\theoremstyle{definition}
\newtheorem{lemma-definition}[lemma]{Lemma-Definition}
\theoremstyle{remark}
\newtheorem{remark}[lemma]{Remark}
\newtheorem{remarks}[lemma]{Remarks}
\newtheorem{example}[lemma]{Example}
\begin{document}

\input xy     
\xyoption{all} 

\dedicatory{To Timothy J.\le Ford\le, for his book \cite{sa}.}

\title[The algebraic Brauer group of a pinched variety]{The algebraic Brauer group of a pinched variety}

\author{Cristian D. Gonz\'alez-Avil\'es}
\address{Departamento de Matem\'aticas, Universidad de La Serena, Cisternas 1200, La Serena 1700000, Chile}
\email{cgonzalez@userena.cl}
\thanks{The author is partially supported by Fondecyt grant 1200118.}
\date{\today}

\makeatletter
\@namedef{subjclassname@2020}{\textup{2020} Mathematics Subject Classification}
\makeatother

\subjclass[2020]{Primary 14F22, Secondary 14H20, 14B05, 11G20}
\keywords{Brauer group, non-normal schemes, singular curves, imperfect fields}

\maketitle

\topmargin -1cm

\smallskip

\begin{abstract} Let $k$ be any field, let $\widetilde{X}$ be a projective and geometrically integral $k$-scheme and let $\widetilde{Y}$ be a
finite closed subscheme of $\widetilde{X}$. If $\psi\colon \widetilde{Y} \to Y\!$ is a schematically dominant morphism between finite $k$-schemes and $X$ is obtained by pinching $\widetilde{X}$ along $\widetilde{Y}$ via $\psi$, we describe the kernel (and, in certain cases, the cokernel\le) of the induced pullback map $\br_{\be 1} X\to \br_{\be 1} \widetilde{X}$ between the corresponding algebraic (cohomological) Brauer groups of $X$ and $\widetilde{X}$ {\it solely} in terms of the Brauer groups of the residue fields of $Y$ and $\widetilde{Y}$ and the Amitsur subgroups of $X$ and $\widetilde{X}$ in $\br\le k$. As an application, we compute the algebraic Brauer group of a projective and geometrically integral $k$-scheme $X$ with a finite non\le-\le normal locus whose normalization $X^{\be N}$ is $k\le$-\le isomorphic to $\Pn_{\! k}^{\e{\rm dim}\le X}\!$. If $k$ is a local field and $X$ is a projective and geometrically integral $k$-curve such that $X^{\be N}$ is {\it smooth}, then we show that the order of the Amitsur subgroup of $X$ in $\br\e k$ is the index of $X$, i.e., the least positive degree of a $0\e$-\lle cycle on $X$. This  statement generalizes a well-known theorem of Roquette and Lichtenbaum, who obtained the above conclusion when $k$ is a $p\e$-\le adic field and $X$ is smooth over $k$.
\end{abstract}

\section*{0. Introduction}

The search for rational (or integral) solutions of concrete polynomial equations often involves singular varieties. For example, in their treatment of the Erd\H{o}s-Straus conjecture via the Brauer-Manin obstruction \cite{bl20}, Bright and Loughran considered a certain singular $\Q\e$-\e surface $X$ that is naturally associated to the conjecture and, in order to compute the indicated obstruction, determined its Brauer group $\br X$. In general, computing the (cohomological) Brauer group of a singular variety is a difficult problem. Consequently, the literature on this subject is rather sparse, especially over an imperfect field. See \cite[\S\S8.1-8.4]{ctsk} for a discussion of various examples over a field which is either algebraically closed or of characteristic $0$ and [op.cit., \S8.5] for a study of the Brauer group of a reduced and separated curve over a field of characteristic $0$. A natural method for studying the Brauer group of a singular variety $X$ in terms of a desingularization $\widetilde{X}\to X$ (when one is available) is to first compute the kernel of the pullback map $\br X\to\br \widetilde{X}$ induced by $\widetilde{X}\to X$ and then apply Grothendieck's purity theorem \cite{ces19} to the smooth variety $\widetilde{X}$ in order to obtain information about $\br \widetilde{X}$. This is, essentially, the approach adopted in \cite{bl20}. If no desingularization of $X$ is available, the above method will retain its value if a covering variety $\widetilde{X}$ of $X$ can be found such that the structure of $\br\widetilde{X}$ is more accessible than that of $\br X$. In this paper we implement this idea to study the {\it algebraic} Brauer group of a projective and geometrically integral $k\e$-\le scheme  with a {\it finite} non\le-\le normal locus over an {\it arbitrary} field $k$. To be precise, let $f\colon X\to \spec k$ be a proper and geometrically integral $k$-scheme, let $\ksep$ be a fixed separable closure of $k$ and set $X^{\rm s}=X\times_{k}\ksep$. The structure of the algebraic Brauer group $\br_{\be 1}X=\krn\!\be\left[\e\br X\be\to\be\br\xs\e\right]$ of $X$ is closely related to that of the Picard scheme $\picxk$ via a canonical exact sequence of abelian groups
\begin{equation}\label{brpic}
0\to \pic\le X\to (\pic\le\xs\le)^{\lbe\g}\to\br\e k\overset{f^{\lle *}}{\to} \br_{\lbe 1}X\to H^{\le 1}_{\et}\lbe(k,\picxk\lbe)\overset{\!\!d}{\to} H^{\le\lle 3}_{\et}\lbe(k,\G_{m}\lbe),
\end{equation}
where $\g={\rm Gal}\le(\ksep\be/k\lle)$. Unfortunately, the structure of $\picxk$ is unknown in general and determining this structure, especially over an imperfect field $k$, is a difficult unsolved problem. However, two important advances on this problem have been made in the past few years under certain restrictions. Firstly, Geisser \cite{gei} advanced our understanding of the structure of the Picard variety of an arbitrary reduced and proper scheme $X$ over a {\it perfect} field $k$. His results imply, {\it inter alia}, that if $\nu=\nu_{X}\colon  X^{\lbe N}\to X$ is the normalization morphism of $X$, then the structure of $\krn\be[\nu^{\le *}\be\colon\be \br_{\be 1} X\to \br_{\be 1} X^{\lbe N}\e]$ is closely related to the $\g$-module structure of $H^{1}_{\et}\lbe(\be X^{\rm{s}},\Z\le)\,$\footnote{\,We will not discuss this structure here.}\e. Secondly, if $k$ is an {\it arbitrary} field and $\nu$ is as above, Brion \cite{bri15} established the surjectivity of the pullback morphism $\nu^{\le *}\colon \picxk\to \picxkn$ and described its kernel for any projective\,\footnote{A weaker hypothesis suffices but we assume projectivity here to simplify the exposition.} and geometrically integral $k$-scheme $X$ with a {\it finite} non\le-\le normal locus. In fact, Brion worked in a much more general setting that we now (partially) describe over a field. Let $\widetilde{X}$ be a projective and geometrically integral $k$-scheme, let $\widetilde{Y}$ be a {\it finite} closed subscheme of $\widetilde{X}$ with corresponding inclusion morphism $\widetilde{\iota}\colon \widetilde{Y}\to \widetilde{X}$ and let 
$\psi\colon \widetilde{Y} \to Y$ be a schematically dominant morphism between finite $k$-schemes. By \cite[\S5]{fer}, there exists a pushout $X$ of $(\e\widetilde{\iota}\le,\lbe\psi)$ in the category of $k$-schemes, i.e., a cocartesian diagram
\begin{equation}\label{pin0}
\xymatrix{\widetilde{Y}\,\ar@{^{(}->}[r]^{\widetilde{\iota}}\ar[d]_{\psi}&\widetilde{X}\ar[d]_{\varphi}\\
Y\,\ar@{^{(}->}[r]^(.45){\iota}&X,
}
\end{equation}
where $\iota$ is a closed immersion, $\varphi$ is finite and the morphism $\widetilde{X} \setminus \widetilde{Y} \to X \setminus Y$ (induced by $\varphi$) is an isomorphism. The $k\le$-\le scheme $X$ is projective and geometrically integral and the above diagram is cartesian as well. We say that \emph{$X$ is obtained by pinching $\widetilde{X}$ along $\widetilde{Y}$ via $\psi$} and that \eqref{pin0} is the {\it pinching diagram} associated with $(\lbe\widetilde{X}, \widetilde{Y},\psi\le)$. Conversely, {\it every} projective and geometrically integral $k$-scheme $X$ with a finite non-normal locus can be obtained via the above construction. Indeed, if $Y$ is the closed subscheme of $X$ defined by the ideal ${\rm Ann}_{\e\cO_{\lbe X}}\be(\nu_{*}\cO_{\be X^{\lle\be N}}\!/\cO_{\lbe X}\be)$, then $X$ can be recovered by pinching $\widetilde{X}=X^{\lbe N}$ along $\widetilde{Y}\defeq Y\times_{X}\widetilde{X}$ via the first projection $\psi\colon \widetilde{Y}\to Y$. See \cite[\S3.1]{lau}. Now, by Brion's key insight \cite[Lemma 2.2]{bri15}, the pinching diagram \eqref{pin0} induces an exact sequence of locally algebraic $k$-group schemes
\begin{equation}\label{dos0}
0\to \mu^{\widetilde{Y}\!/\le Y}\to\picxk\overset{\varphi^{*}}{\to} \picxkt\to 0,
\end{equation}
where $\mu^{\widetilde{Y}\!/\le Y}$ is smooth, connected, affine and algebraic. See \cite[Remark 2.6]{bri15}. In this paper we combine, in a rather elaborate way, the sequences \eqref{brpic} and \eqref{dos0} to compute the kernel of the induced pullback map 
$\varphi^{\le *}_{1}\be\colon\be \br_{\lbe 1} X\to \br_{\lbe 1} \widetilde{X}$ {\it solely} in terms of the Brauer groups of the residue fields of $\e Y$ and $\widetilde{Y}$ and the Amitsur subgroups of $X$ and $\widetilde{X}$ in $\br\e k$, i.e., the images of the maps $(\pic\le\xs\le)^{\lbe\g}\to\br\e k$ and $(\pic\le\widetilde{X}^{\rm s}\le)^{\lbe\g}\to\br\e k$ in the sequence \eqref{brpic} for $X$ and $\widetilde{X}$. Further, we bound 
(and, in some cases, prove the triviality of\e) $\cok\varphi^{\le *}_{1}$ in terms of Brauer groups of fields and analogs, in cohomological degree $3$, of relative Brauer groups of finite field extensions. In order to state our results, we introduce the following definitions. If $f\colon X\to S$ is a morphism of schemes, we write $\bxs$ for the kernel of the pullback map $\br S\to\br X$ induced by $f$. If $S=\spec k$, where $k$ is a field, $\bxs$ will be denoted by $\bxk$. We have $\bxk=\krn\be[\, f^{*}\lbe\colon \br\le k\to \brok\e]$, which is an abelian group of finite exponent annihilated by the {\it index} $I(\be X\lle)$ of $X$ over $k$, i.e., the least positive degree of a $0\le$-\le cycle on $X$. If $L/k$ is a field extension, then $\br(\spec L/\lle k)=\br(\lbe L\lbe/k\le)=\krn\be[\e\br k\lbe\to\lbe\br L\le]$ is the {\it relative Brauer group of $L/k$}. If $X$ is $k\e$-\le proper and geometrically integral then, by the exactness of \eqref{brpic}, $\bxk$ agrees with the Amitsur subgroup of $X$ in $\br\e k$. Now set $\bra=\cok\be[\, f^{*}\lbe\colon \br\le k\to\brok\e]$. Then \eqref{brpic} induces isomorphims of abelian groups
\begin{equation}\label{tri0}
\bxk=\cok\!\!\left[\e\pic\le X\hookrightarrow (\pic\le\xs\le)^{\lbe\g}\e\right]
\end{equation}
and
\[
\hskip 1.65cm\bra=\krn\be[H^{\le 1}_{\et}\lbe(k,\picxk\be)\overset{\!\!d}{\to} H^{\le\lle 3}_{\et}\lbe(k,\G_{m}\lbe)],
\]
where $d$ is the differential in \eqref{brpic}. We will also need the following analog of $\bxk$ in cohomological degree $\le 3$:
\[
H^{\le\lle 3}\lbe(\lbe X/\lbe k\e)=\krn\be[H^{\e 3}_{\et}\lbe( k,\G_{\le m}\lbe)\to H^{\e 3}_{\et}\lbe(X,\G_{\le m}\lbe)].
\]
If $\varphi\colon\widetilde{X}\to X$ is an arbitrary morphism of $k$-schemes,  $\varphi_{\lbe\rm a}^{*}\colon \bra\to\brap$ will denote the natural map induced by $\varphi^{\le *}_{1}\be\colon\be \br_{\lbe 1} X\to \br_{\lbe 1} \widetilde{X}$. Note that
\[
\bxk=\krn\be[\e\br\le k\overset{\!f^{\le *}}{\to} \brok\e]\subseteq\krn\be[\e\br\le k\overset{\varphi_{1}^{ *}\lbe\circ\lbe f^{\lle *}}{\lra} \br_{\lbe 1} \widetilde{X}\e]=\bxkp\subseteq\br k.
\]
The subquotient $\bxkp/\e\bxk$ of $\br\e k$ intervenes frequently in our considerations. The main results of this paper are summarized in the following statement.

\begin{theorem}\label{thm a} Let $k$ be an arbitrary field and let $(\lbe\widetilde{X}, \widetilde{Y},\psi\le)$ be as above, i.e., $\widetilde{X}$ is a projective and geometrically integral $k$-scheme, $\widetilde{Y}$ is a finite closed subscheme of $\widetilde{X}$ and $\psi\colon \widetilde{Y} \to Y$ is a schematically dominant morphism between finite $k$-schemes. If $X$ is obtained by pinching $\widetilde{X}$ along $\widetilde{Y}$ via $\psi$, then the pinching diagram \eqref{pin0} induces
\begin{enumerate}
\item[(i)] an injection of abelian groups
\[
\frac{\bxkp}{\bxk}\into \prod_{y\e\in\e Y}\bigcap_{\e \widetilde{y}\e\in\e \widetilde{Y}_{y}}\!\br\lbe(\kappa(\widetilde{y}\,)/\kappa(y)),
\]
\item[(ii)] a canonical exact sequence of abelian groups 
\[
\hskip .2cm 0\to\cok\!\!\!\left[\le \frac{\bxkp}{\bxk}\into \prod_{y\e\in\e Y}\!\bigcap_{\e \widetilde{y}\e\in\e \widetilde{Y}_{y}}\!\br\lbe(\kappa(\widetilde{y}\,)/\kappa(y))\right]\!\to \bra\overset{\!\!\!\varphi_{\lbe\rm a}^{*}}{\to}\brap\to H^{\le 2}_{\et}\lbe(k,\mu^{\widetilde{Y}\!/\le Y}),
\]
where $H^{\le 2}_{\et}\lbe(k,\mu^{\widetilde{Y}\!/\le Y})$ is an extension
\[
\hskip .21cm 0\to\!\!\prod_{\e y\e\in\e Y}\!\be\cok\!\!\!\left[\lbe\br\le \kappa(y)\!\to\!\!\be\prod_{\e\widetilde{y}\e\in\e \widetilde{Y}_{y}}\!\br\e \kappa(\le\widetilde{y}\,)\be\right]\!\to\! H^{\le 2}_{\et}\lbe(k,\mu^{\widetilde{Y}\!/\le Y})\!\to\!\prod_{y\e\in\e Y}\!\bigcap_{\e \widetilde{y}\e\in\e \widetilde{Y}_{y}}\!H^{\le\lle 3}\lbe(\kappa(\le\widetilde{y}\,)/\lbe \kappa(y))\to 0,
\]
and
\item[(iii)] an exact sequence of abelian groups 
\[
0\to \krn \varphi_{1}^{*}\to \brok\!\overset{\!\!\varphi_{\lbe\lle 1}^{*}}{\lra}\!\brokp\!\to\! \cok\varphi_{\lbe\rm a}^{*}\to 0,
\]
where $\krn \varphi_{1}^{*}$ is an extension
\[
\hskip .5cm 0\to\frac{\bxkp}{\bxk}\to\krn \varphi_{1}^{*}\to \cok\!\!\!\left[\le \frac{\bxkp}{\bxk}\into \prod_{y\e\in\e Y}\!\bigcap_{\e \widetilde{y}\e\in\e \widetilde{Y}_{y}}\!\br\lbe(\kappa(\widetilde{y}\,)/\kappa(y))\right]\to 0.
\] 
\end{enumerate}
\end{theorem}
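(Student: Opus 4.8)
The plan is to graft Brion's sequence \eqref{dos0} onto the fundamental sequence \eqref{brpic} for $X$ and for $\widetilde X$ simultaneously, so that the comparison of $\brok$ with $\brokp$ becomes a statement about the $k$-cohomology of the single group $\mu^{\widetilde{Y}/Y}$. Write $A=\tau_{\leq 1}Rf_{*}\G_{m}$ and $B=\tau_{\leq 1}R\widetilde{f}_{*}\G_{m}$ for the truncated direct images on $(\spec k)_{\et}$; then $H^{\e i}_{\et}(k,A)$ equals $\pic X$ for $i=1$ and $\brok$ for $i=2$, the latter being exactly the exactness of \eqref{brpic} around $\brok$, and similarly for $B$ and $\widetilde X$. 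On cohomology sheaves the map $A\to B$ induced by $\varphi$ is the identity on $\G_{m}=R^{0}\lbe f_{*}\G_{m}$ and is $\varphi^{*}\colon\picxk\to\picxkt$ on $R^{1}\lbe f_{*}\G_{m}$, which by \eqref{dos0} is surjective with kernel $\mu^{\widetilde{Y}/Y}$. Hence the mapping cone of $A\to B$ is quasi-isomorphic to $\mu^{\widetilde{Y}/Y}$ placed in degree $0$, and applying $R\Gamma(\spec k,-)$ to the resulting triangle yields the long exact sequence
\[
\pic X\overset{\varphi^{*}}{\to}\pic \widetilde X\to H^{1}_{\et}(k,\mu^{\widetilde{Y}/Y})\to\brok\overset{\varphi_{1}^{*}}{\to}\brokp\to H^{2}_{\et}(k,\mu^{\widetilde{Y}/Y})\to\cdots .
\]

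The decisive simplification is that $\varphi^{*}\colon\pic X\to\pic \widetilde X$ is surjective. Indeed, by the cocartesian property of \eqref{pin0} a line bundle on $X$ amounts to a line bundle on $\widetilde X$ equipped with descent data along $\psi$ over $\widetilde Y$; since $Y$ and $\widetilde Y$ are finite, hence Artinian, $k$-schemes, their Picard groups vanish, so such descent data always exists and every line bundle on $\widetilde X$ extends to $X$. Therefore the connecting map $\pic \widetilde X\to H^{1}_{\et}(k,\mu^{\widetilde{Y}/Y})$ vanishes and the sequence above truncates to the exact sequence
\[
0\to H^{1}_{\et}(k,\mu^{\widetilde{Y}/Y})\to\brok\overset{\varphi_{1}^{*}}{\to}\brokp\to H^{2}_{\et}(k,\mu^{\widetilde{Y}/Y}),
\]
so that $\krn\varphi_{1}^{*}\cong H^{1}_{\et}(k,\mu^{\widetilde{Y}/Y})$ and $\cok\varphi_{1}^{*}\hookrightarrow H^{2}_{\et}(k,\mu^{\widetilde{Y}/Y})$.

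To make these two groups explicit I would feed the presentation $1\to R_{Y/k}\G_{m}\to R_{\widetilde{Y}/k}\G_{m}\to\mu^{\widetilde{Y}/Y}\to 1$ afforded by Brion's description (the left-hand injectivity being the schematic dominance of $\psi$) into its long exact cohomology sequence. For a finite $k$-scheme $Z$ one has $H^{\e i}_{\et}(k,R_{Z/k}\G_{m})=H^{\e i}_{\et}(Z,\G_{m})=\prod_{z\in Z}H^{\e i}_{\et}(\kappa(z),\G_{m})$ for $i\geq 1$, by exactness of finite pushforward together with the insensitivity of $H^{\geq 1}_{\et}(-,\G_{m})$ to nilpotents, and $H^{1}_{\et}(\kappa(z),\G_{m})=0$ by Hilbert's Theorem 90. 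The connecting maps then give at once
\[
H^{1}_{\et}(k,\mu^{\widetilde{Y}/Y})\cong\krn\Big[\textstyle\prod_{y\in Y}\br\kappa(y)\to\prod_{\widetilde{y}\in\widetilde{Y}}\br\kappa(\widetilde{y}\,)\Big]=\prod_{y\in Y}\bigcap_{\widetilde{y}\in\widetilde{Y}_{y}}\br(\kappa(\widetilde{y}\,)/\kappa(y)),
\]
which is the target in (i), and present $H^{2}_{\et}(k,\mu^{\widetilde{Y}/Y})$ as the extension of $\prod_{y}\bigcap_{\widetilde{y}}H^{3}(\kappa(\widetilde{y}\,)/\kappa(y))$ by $\prod_{y}\cok[\br\kappa(y)\to\prod_{\widetilde{y}\in\widetilde{Y}_{y}}\br\kappa(\widetilde{y}\,)]$ recorded in (ii).

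Finally I would split off the field contribution using the two copies of \eqref{brpic}. Since $\krn[f^{*}\colon\br k\to\brok]=\bxk$ while $\varphi_{1}^{*}\lbe\circ\lbe f^{*}=\widetilde{f}^{*}$ has kernel $\bxkp$, the map $f^{*}$ identifies $\bxkp/\bxk$ with a subgroup of $\krn\varphi_{1}^{*}=H^{1}_{\et}(k,\mu^{\widetilde{Y}/Y})$, which is the injection (i). Comparing the extensions $0\to\br k/\bxk\to\brok\to\bra\to 0$ and $0\to\br k/\bxkp\to\brokp\to\brap\to 0$ supplied by \eqref{brpic} through $\varphi_{1}^{*}$ — the two differentials into $H^{3}_{\et}(k,\G_{m})$ agreeing — and chasing the snake lemma identifies $\krn\varphi_{\rm a}^{*}$ with the cokernel of the injection (i) and $\cok\varphi_{\rm a}^{*}$ with $\cok\varphi_{1}^{*}$, which assembles the exact sequences of (ii) and (iii). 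The main obstacle is the rigorous construction of the first long exact sequence: it depends on the identification $H^{2}_{\et}(k,\tau_{\leq 1}Rf_{*}\G_{m})=\brok$ and on the cone computation, both of which must be carried out functorially in $X$ so that the connecting maps are compatible with the differentials of \eqref{brpic}; a secondary subtlety is the reduction to residue fields when $Y$ or $\widetilde Y$ is non-reduced.
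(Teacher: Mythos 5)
Your proposal is correct, and it reaches all three parts of the theorem by a route whose mechanics are genuinely different from the paper's, while consuming the same essential inputs: Brion's sequence \eqref{dos0}, the surjectivity of $\pic X\to\pic\widetilde{X}$, the Picard/Brauer computations for the finite $k$-schemes $Y$ and $\widetilde{Y}$, and a snake-lemma comparison with \eqref{brpic}. The paper never forms a mapping cone: it splices the low-degree Leray sequences \eqref{seq3} for $X$ and $\widetilde{X}$ with the cohomology sequence \eqref{seq2} of \eqref{dos0} through the diagrams \eqref{ali2}, \eqref{ali} and \eqref{dia}, first obtaining $\krn\varphi_{\rm a}^{*}\cong\cok\alpha$ and $\cok\varphi_{\rm a}^{*}\into H^{\le 2}_{\et}\lbe(k,\mu^{\widetilde{Y}\!/\le Y})$ (Theorem \ref{choo}), then part (iii) by the snake lemma applied to \eqref{ali} (Corollary \ref{fund}), and finally (i)--(ii) via the identifications \eqref{pac}, \eqref{rel} and \eqref{corel} (Theorem \ref{fund2} and Corollary \ref{fund3}). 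Your cone of truncated complexes compresses all of these diagram chases into a single functorial six-term sequence, and it even yields a statement the paper does not record: a canonical isomorphism $\krn\varphi_{1}^{*}\cong H^{\le 1}_{\et}\lbe(k,\mu^{\widetilde{Y}\!/\le Y})\cong\prod_{y\in Y}\bigcap_{\widetilde{y}\in\widetilde{Y}_{y}}\br\lbe(\kappa(\widetilde{y}\,)/\kappa(y))$, rather than merely the extension structure asserted in (iii); this is compatible with the paper, since $\alpha$ \eqref{comp} embeds $\bxkp/\e\bxk$ into $H^{\le 1}_{\et}\lbe(k,\mu^{\widetilde{Y}\!/\le Y})$ with cokernel $\krn\varphi_{\rm a}^{*}$. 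What the paper's organization buys in exchange is that everything through Section 2 is carried out over an arbitrary locally noetherian base $S$, which is what makes Theorem \ref{thm B} (via Lemma \ref{comm} and Theorem \ref{gbk}) fall out of the same machine, whereas your argument as written is specific to $S=\spec k$. Two points in your write-up should be firmed up, though neither is a gap: the identification $H^{\le 2}_{\et}\lbe(k,\tau_{\leq 1}Rf_{*}\G_{m})=\brok$ is not literally ``the exactness of \eqref{brpic}'' but follows from the triangle $\tau_{\leq 1}Rf_{*}\G_{m}\to Rf_{*}\G_{m}\to\tau_{\geq 2}Rf_{*}\G_{m}$ together with $H^{\le 1}_{\et}\lbe(k,\tau_{\geq 2}Rf_{*}\G_{m})=0$ and $H^{\le 2}_{\et}\lbe(k,\tau_{\geq 2}Rf_{*}\G_{m})=\br_{\lbe X\be/k}\lbe(k)=(\br\xs\le)^{\lbe\g}$ (Remark \ref{int}(a)); and the surjectivity of $\pic X\to\pic\widetilde{X}$, which you obtain from Milnor--Ferrand patching for the pinching square (cf.\ \cite[\S2]{fer}) together with $\pic Y=\pic\widetilde{Y}=0$, is genuinely needed as a separate step (Brion's surjectivity of $\varphi^{*}\colon\picxk\to\picxkt$ as sheaves says nothing about global line bundles) and is obtained in the paper instead from \cite[${\rm IV}_{\be 4}$, Proposition 21.8.5(ii)]{ega}, using that $X\setminus Y$ is schematically dense in $X$.
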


\smallskip

The group $\prod_{\e y\e\in\e Y}\be\bigcap_{\, \widetilde{y}\e\in\e \widetilde{Y}_{y}}\be\br\lbe(\kappa(\widetilde{y}\,)/\kappa(y))$ which appears in the above statement, and therefore also $\bxkp/\e\bxk$, is an abelian group of finite exponent annihilated by $\lcm\{\le I\lbe\big(\le\widetilde{Y}_{ y}\lbe\lle\big)\colon\be y\in Y\lle\}$. See Remark \ref{alc}\,. Here $\widetilde{Y}_{ y}=\psi^{-1}\lbe(\lle y\lle)$ is the fiber of $\psi$ at $y\in Y$.

\smallskip

\begin{corollary} \label{cor} Let $k$ be any field and let $X$ be a projective and geometrically integral $k$-scheme with a finite non\le-\le normal locus $Y$ such that $X^{\be N}$ is $k\le$-\e isomorphic to $\Pn_{\! k}^{\e\le{\rm dim}\e X}\be$. Then there exists a canonical isomorphism of abelian groups
\[
\brok=\br\e k\oplus \prod_{y\e\in\e Y}\bigcap_{\e \widetilde{y}\e\in\e \widetilde{Y}_{y}}\!\br\lbe(\kappa(\widetilde{y}\,)/\kappa(\le y)),
\]  
where $\widetilde{Y}$ is the ramification locus of $X^{\be N}\to X$.
\end{corollary}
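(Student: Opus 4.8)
The plan is to specialize Theorem~\ref{thm a} to the canonical pinching of $\widetilde{X}=X^{\be N}\cong\Pn^{\e n}_{\! k}$ (with $n={\rm dim}\,X$) along $\widetilde{Y}$, after first recording what projective space forces on $\widetilde{X}$. Since $\pic\,\Pn^{\e n}_{\! k}=\Z\cdot[\cO(1)]$ restricts isomorphically onto $(\pic\,\xts)^{\g}=\Z$, the identity \eqref{tri0} applied to $\widetilde{X}$ gives $\krn[\br k\overset{\widetilde f^{\,*}}{\to}\brokp]=\cok[\pic\,\widetilde{X}\into(\pic\,\xts)^{\g}]=0$, where $\widetilde f\colon\widetilde{X}\to\spec k$ is the structure morphism; thus $\widetilde f^{\,*}$ is injective. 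Moreover $\picxkt$ is the constant group scheme $\Z$, so $H^{\le 1}_{\et}(k,\picxkt)=\Hom_{\rm cont}(\g,\Z)=0$, and hence $\brap=0$. Feeding these two facts into \eqref{brpic} for $\widetilde{X}$ collapses that sequence to an isomorphism $\widetilde f^{\,*}\colon\br k\isoto\brokp$.

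Next I would transfer this to $X$. As $f\circ\varphi=\widetilde f$, we have $\varphi^{\le *}_{1}\circ f^{\lle *}=\widetilde f^{\,*}$, so $\bxkp=\krn\widetilde f^{\,*}=0$; since $\bxk\subseteq\bxkp$, both Amitsur subgroups vanish and $\bxkp/\bxk=0$. In particular $f^{\lle *}\colon\br k\to\brok$ is injective, whence $0\to\br k\overset{f^{\lle *}}{\to}\brok\to\bra\to0$ is exact. Substituting $\bxkp/\bxk=0$ — so that the cokernel appearing in Theorem~\ref{thm a}(ii) is the whole product — together with $\brap=0$, the exact sequence of Theorem~\ref{thm a}(ii) degenerates to an isomorphism
\[
\bra\cong\prod_{y\e\in\e Y}\bigcap_{\e\widetilde{y}\e\in\e\widetilde{Y}_{y}}\br\lbe(\kappa(\widetilde{y}\,)/\kappa(y)),
\]
exhibiting $\brok$ as an extension of this product by $\br k$.

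The only step requiring a genuine idea is to split this extension \emph{canonically}, and here the normalization morphism supplies its own retraction. Using the isomorphism $\widetilde f^{\,*}$ of the first step, set $r=(\widetilde f^{\,*})^{-1}\circ\varphi^{\le *}_{1}\colon\brok\to\br k$. The relation $\varphi^{\le *}_{1}\circ f^{\lle *}=\widetilde f^{\,*}$ gives $r\circ f^{\lle *}=\mathrm{id}$, so $r$ retracts $f^{\lle *}$ and the sequence of the previous paragraph splits, yielding the asserted decomposition $\brok=\br k\oplus\prod_{y}\bigcap_{\widetilde{y}}\br(\kappa(\widetilde{y}\,)/\kappa(y))$. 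Since $r$ is built solely from the morphisms of the pinching diagram \eqref{pin0}, the splitting is canonical; in particular the argument appeals to no rational point of $X$ and therefore applies uniformly over every field $k$, including finite ones.
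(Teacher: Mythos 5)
Your proof is correct, and its overall skeleton matches the paper's: specialize the pinching machinery to the conductor square of $X^{\be N}\to X$, show that the two obstruction groups $\bxkp$ and $\brap$ vanish, and then split the resulting extension canonically via the pullback $f^{\lle *}$ (your retraction $r=(\widetilde f^{\,*})^{-1}\circ\varphi_{1}^{*}$ is exactly the statement that $f^{\lle *}$ is a section, since $\varphi_{1}^{*}\circ f^{\lle *}=\widetilde f^{\,*}$). Where you genuinely diverge is in \emph{how} the vanishing is established. The paper deduces the corollary from Example \ref{ex1}: it observes that $\Pn^{\lle n}_{\!k}$ is retract rational, hence universally ${\rm CH}_{\le 0}$-trivial, and then invokes the Auel--Bigazzi--B\"ohning--von Bothmer theorem \cite[Theorem 1.1]{a3b} to get that $\br F\to\br\widetilde X_{F}$ is an isomorphism for every extension $F/k$, which gives $\bxkp=\brap=0$ at once. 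You instead prove these two vanishings by hand for projective space: $\bxkp=0$ because $\pic\e\Pn^{\lle n}_{\!k}\to(\pic\e\xts)^{\lbe\g}$ is already surjective (via \eqref{tri0}/\eqref{cbx}), and $\brap=0$ because $\picxkt$ is the constant group scheme $\Z$ and $H^{\le 1}_{\et}(k,\Z)=\Hom_{\rm cont}(\g,\Z)=0$, so \eqref{brpic} collapses. Your route is more elementary and self-contained---it avoids a substantial external input---at the cost of being special to $\Pn^{\lle n}_{\!k}$ (or to varieties whose Picard scheme is constant with trivially-acting sections), whereas the paper's route through universal ${\rm CH}_{\le 0}$-triviality yields the stronger Example \ref{ex1}, covering all smooth, universally ${\rm CH}_{\le 0}$-trivial (e.g.\ retract rational) normalizations with no extra work. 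Both splittings are the same canonical one, so the resulting isomorphism agrees.
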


See Example \ref{ex1} for a more general statement.

\medskip

The {\it proof} of Theorem \ref{thm a} also yields the following generalization of a theorem established by Ballico and Koll\'ar \cite{bk} when $S$ is the spectrum of a field of characteristic zero.

\begin{theorem}\label{thm B} Let $\widetilde{X}$ be a reduced scheme, let $S$ be a connected locally noetherian scheme and let $\widetilde{X}\to S$ be a projective and flat morphism with integral geometric fibers. Further, let $\widetilde{Y}$ be a closed subscheme of $\widetilde{X}$ which is finite and faithfully flat over $S$ and let $\psi\colon\widetilde{Y} \to Y$ be a schematically dominant morphism between finite and faithfully flat $S$-schemes such that the pullback map $\psi^{\le *}\colon\pic\e Y\onto \pic\e \widetilde{Y}$ is {\rm surjective}. If $X$ is obtained by pinching $\widetilde{X}$ along $\widetilde{Y}$ via $\psi$, then
\[
\bxs=\bxsp\cap\bys,
\]
where the intersection takes place in $\br\le S$.
\end{theorem}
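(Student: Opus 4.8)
The plan is to establish the nontrivial inclusion $\bxsp\cap\bys\subseteq\bxs$, the reverse being immediate since the structure morphisms of $\widetilde{X}$ and $Y$ factor through $\varphi$ and $\iota$ respectively. Write $f\colon X\to S$ for the structure morphism, so that $\widetilde{f}=f\circ\varphi$ and $g=f\circ\iota$. Fix $\alpha\in\br S$ with $\widetilde{f}^{\,*}\alpha=0$ and $g^{*}\alpha=0$ and put $\beta=f^{*}\alpha\in\br X$; then $\varphi^{*}\beta=0$ and $\iota^{*}\beta=0$, so $\beta\in\krn[(\varphi^{*},\iota^{*})\colon\br X\to\br\widetilde{X}\oplus\br Y]$, and the task is to show that a class in $\img f^{*}$ killed by both $\varphi^{*}$ and $\iota^{*}$ must vanish.

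The engine is the exact sequence of \'etale sheaves on $X$ produced by the pinching square \eqref{pin0}. Because \eqref{pin0} is cocartesian and cartesian with $\varphi$ an isomorphism over $X\setminus Y$, the map $\G_{m,X}\to\varphi_{*}\G_{m,\widetilde{X}}\oplus\iota_{*}\G_{m,Y}$ is injective with image $\krn v$ (a pair of units with equal restrictions to $\widetilde{Y}$ glues to a unit on $X$); writing $j=\iota\circ\psi=\varphi\circ\widetilde{\iota}$ and letting $\mathcal{Q}$ be the cokernel of $v$, one obtains
\[
0\to\G_{m,X}\to\varphi_{*}\G_{m,\widetilde{X}}\oplus\iota_{*}\G_{m,Y}\xrightarrow{\ v\ }j_{*}\G_{m,\widetilde{Y}}\to\mathcal{Q}\to 0 .
\]
Since $\varphi,\iota,j$ are finite, hence acyclic for \'etale cohomology, the pushforward terms compute $\br\widetilde{X},\br Y,\br\widetilde{Y}$; moreover $\mathcal{I}\defeq\img v$ and $\mathcal{Q}$ are supported on the finite subscheme $Y$, so $R^{q}f_{*}\mathcal{I}=R^{q}f_{*}\mathcal{Q}=0$ for $q>0$. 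Pushing forward $0\to\G_{m,X}\to\varphi_{*}\G_{m,\widetilde{X}}\oplus\iota_{*}\G_{m,Y}\to\mathcal{I}\to 0$ and using $f_{*}\G_{m,X}=\G_{m,S}$ (valid as $f$ has geometrically integral fibers) gives the six-term sequence
\[
0\to\G_{m,S}\to\G_{m,S}\oplus g_{*}\G_{m,Y}\to f_{*}\mathcal{I}\to\picxs\xrightarrow{\ \varphi^{*}\ }\picxst\to 0 ,
\]
from which one reads off both the relative form $0\to\mu^{\widetilde{Y}/Y}\to\picxs\xrightarrow{\varphi^{*}}\picxst\to 0$ of \eqref{dos0}, with $\mu^{\widetilde{Y}/Y}=\krn\varphi^{*}$, and the auxiliary sequence $0\to g_{*}\G_{m,Y}\to f_{*}\mathcal{I}\to\mu^{\widetilde{Y}/Y}\to 0$ on $S$.

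Next, the Leray spectral sequence $H^{p}_{\et}(S,R^{q}f_{*}\G_{m})\Rightarrow H^{p+q}_{\et}(X,\G_{m})$ together with $f_{*}\G_{m,X}=\G_{m,S}$ identifies $\bxs$ with the image of the transgression $d_{2}^{X}\colon H^{0}_{\et}(S,\picxs)\to\br S$, and likewise $\bxsp=\img d_{2}^{\widetilde{X}}$; naturality in $\varphi$ yields $d_{2}^{X}=d_{2}^{\widetilde{X}}\circ\varphi^{*}$. As $\alpha\in\bxsp$, write $\alpha=d_{2}^{\widetilde{X}}(\widetilde{s})$ with $\widetilde{s}\in H^{0}_{\et}(S,\picxst)$. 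Then $\alpha\in\bxs$ as soon as $\widetilde{s}$ lifts along $\varphi^{*}$ modulo $\krn d_{2}^{\widetilde{X}}$, i.e. as soon as the obstruction $\delta(\widetilde{s})\in H^{1}_{\et}(S,\mu^{\widetilde{Y}/Y})$ — the image of $\widetilde{s}$ under the connecting map of the relative Picard sequence — lies in $\delta(\krn d_{2}^{\widetilde{X}})$; it is enough to prove $\delta(\widetilde{s})=0$.

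The heart of the matter, and the step I expect to be hardest, is to force $\delta(\widetilde{s})=0$ out of the two remaining hypotheses $g^{*}\alpha=0$ and $\psi^{*}\colon\pic Y\onto\pic\widetilde{Y}$. The auxiliary sequence of the second paragraph has a connecting map $c\colon H^{1}_{\et}(S,\mu^{\widetilde{Y}/Y})\to H^{2}_{\et}(S,g_{*}\G_{m,Y})=\br Y$, and a diagram chase through the double complex $Rf_{*}$ applied to the four-term sequence should identify $c\circ\delta$ with $\pm\,g^{*}\circ d_{2}^{\widetilde{X}}$, whence $c(\delta(\widetilde{s}))=\pm\,g^{*}\alpha=0$. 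It then suffices that $c$ be injective, equivalently (from the long exact sequence of the auxiliary sequence) that $\pic Y\to H^{1}_{\et}(S,f_{*}\mathcal{I})$ be surjective. This is precisely where $\psi^{*}$ surjective enters: the composite $g_{*}\G_{m,Y}\to f_{*}\mathcal{I}\to f_{*}j_{*}\G_{m,\widetilde{Y}}$ is $\psi^{\#}$, so surjectivity of $\psi^{*}$ on Picard groups propagates to the desired surjectivity once the contribution of $H^{0}$ of the cokernel sheaf $\mathcal{Q}$ is shown to be absorbed. Establishing this absorption and the compatibility $c\circ\delta=\pm\,g^{*}\circ d_{2}^{\widetilde{X}}$ is the delicate bookkeeping of the argument, and is exactly the relative counterpart of the computation carried out in the proof of Theorem \ref{thm a}.
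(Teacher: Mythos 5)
Your scaffolding is sound: the easy inclusion, the Mayer--Vietoris sheaf sequence attached to \eqref{pin}, the identifications $\bxs=\img d_{2}^{X}$ and $\bxsp=\img d_{2}^{\widetilde{X}}$ together with $d_{2}^{X}=d_{2}^{\widetilde{X}}\circ\varphi^{*}$, and the reduction to $\delta(\widetilde{s}\,)=0$ are all correct. But the two steps you defer as ``delicate bookkeeping'' are not bookkeeping; together they are the entire mathematical content of the theorem, and the first cannot be obtained by the method you indicate. The compatibility $c\circ\delta=\pm\, g^{*}\circ d_{2}^{\widetilde{X}}$ is (once $\mathcal{Q}$ is disposed of, see below) exactly the paper's Lemma \ref{comm}. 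It does not follow from ``a diagram chase through the double complex $Rf_{*}$'', because the Leray differential $d_{2}^{\widetilde{X}}$ carries no a priori formula with which to chase: one must first express it as a composite of connecting homomorphisms of explicit sheaf sequences. The paper does this by resolving $\G_{m,\widetilde{X}}$ by rational functions and Cartier divisors --- the sequence \eqref{res}, available because $\widetilde{X}$ is reduced --- and appealing to the sign-compatibility \cite[diagram (1.A.11), p.~399]{cts87}; only after that identification does functoriality of \'etale cohomology give the desired square. Your proposal postulates the analogous identity instead of proving it, so as written it has a genuine gap at the decisive point.

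The second deferred step is fillable, although you did not fill it: in fact $\mathcal{Q}=0$, i.e.\ $v$ is surjective. Since $\varphi$, $\iota$ and $j$ are finite, the stalk of each pushforward at a geometric point $\bar{y}$ of $Y$ is a product of stalks over the fibre; every geometric point $\bar{w}$ of $\widetilde{Y}$ over $\bar{y}$ is also one of $\widetilde{X}$, and units lift through the surjection of strictly henselian local rings $\cO^{\,\rm sh}_{\widetilde{X},\bar{w}}\onto\cO^{\,\rm sh}_{\widetilde{Y},\bar{w}}$ (exactness of the sequence on the left and in the middle holds because pinching commutes with flat, hence \'etale, base change \cite{fer}). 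Granting this, $f_{*}\mathcal{I}=(f\circ j)_{*}\G_{m,\widetilde{Y}}$, your auxiliary sequence becomes the paper's \eqref{uno}, $H^{1}_{\et}(S,f_{*}\mathcal{I})=\pic \widetilde{Y}$, and the surjectivity you need is literally the hypothesis on $\psi^{*}$. But once $\mathcal{Q}=0$ is known you should abandon the transgression route altogether: taking \'etale cohomology over $X$ (rather than pushing forward to $S$) of $0\to\G_{m,X}\to\varphi_{*}\G_{m,\widetilde{X}}\oplus\iota_{*}\G_{m,Y}\to j_{*}\G_{m,\widetilde{Y}}\to 0$ and using that finite morphisms are acyclic yields an exact sequence
\[
\pic \widetilde{X}\oplus\pic Y\to\pic \widetilde{Y}\to\br X\to\br\widetilde{X}\oplus\br Y,
\]
in which surjectivity of $\psi^{*}$ makes the first map surjective, hence $\br X\to\br\widetilde{X}\oplus\br Y$ injective; then for $\alpha\in\bxsp\cap\bys$ the class $f^{*}\alpha$ is killed by both $\varphi^{*}$ and $\iota^{*}$, so $f^{*}\alpha=0$, i.e.\ $\alpha\in\bxs$. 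That completed argument would be a genuinely different and more elementary proof than the paper's, which instead invokes Brion's sequence (Theorem \ref{ex}) and deduces from Lemma \ref{comm} that the natural map $\bxsp\be/\e\bxs\to\bysp\be/\e\bys$ is injective (proof of Theorem \ref{gbk}). In its present form, however, your proposal leaves both crucial steps unproved.
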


If $S=\spec k$, where $k$ is a field, the preceding theorem describes the Amitsur subgroup in $\br\e k$ of the pinched variety $X$ in terms of the corresponding group for $\widetilde{X}$ and the group
$\br(\le Y\!/\le k\lle)=\bigcap_{\, y\e\in\e Y}\be\br\lbe(\kappa(\lle y)\lbe/k\lle)$ \eqref{bk0}.
 
\medskip

If $k$ is a local field, $\br\e k$ is isomorphic to a subgroup of $\Q/\Z$. Consequently, if $X$ is $k\e$-\e proper and geometrically integral, then $\bxk\simeq \cok\!\!\left[\e\pic\le X\hookrightarrow (\pic\le\xs\le)^{\lbe\g}\e\right]$ \eqref{tri0} is a finite subgroup of $\br\e k$. If $X$ is smooth and $k$ is a $p\e$-adic field (i.e., a finite extension of $\Q_{\lle p}$), Roquette and Lichtenbaum showed that the order of $\bxk$ is equal to the index of $X$ over $k$. See \cite[Theorem 1]{roq} and \cite[Theorem, p.~120]{lich}. In section \ref{cin} we apply Theorem \ref{thm B} above with $S=\spec k$, where $k$ is any local field, and extend the Roquette\e-\lle Lichtenbaum theorem to a class of (not necessarily smooth) proper and geometrically integral $k\e$-\le curves that contains {\it all} proper and geometrically integral $k\e$-\le curves if $k$ is a $p\e$-\le adic field. The precise statement is the following.

\begin{corollary} Let $X$ be a proper and geometrically integral curve over a local field $k$. If $X^{\be N}$ is {\rm smooth}, then 
\[
\left[\lle(\pic\le\xs\le)^{\lbe\g}\lbe\colon\be \pic\le X \e\right]=I(\be X\lle),
\]
where $I(\be X\lle)$ is the index of $X$ over $k$.
\end{corollary}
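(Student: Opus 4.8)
The plan is to reduce the asserted equality to a computation of the order of the Amitsur subgroup $\bxk$. By \eqref{tri0} and the remark immediately preceding the corollary, $\bxk\simeq\cok\big[\pic X\into(\pic\e\xs)^{\g}\big]$ is a \emph{finite} subgroup of $\br\e k$, whence $\big[(\pic\e\xs)^{\g}\colon\pic X\big]=\#\,\bxk$, and it suffices to prove that $\#\,\bxk=I(\be X\lle)$.

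First I would invoke Theorem \ref{thm B} with $S=\spec k$. Since $Y$ and $\widetilde{Y}$ are finite $k$-schemes---finite disjoint unions of spectra of local Artinian rings---we have $\pic Y=\pic\widetilde{Y}=0$, so the hypothesis that $\psi^{*}\colon\pic Y\onto\pic\widetilde{Y}$ be surjective holds automatically. As $\widetilde{X}=X^{N}$ here, Theorem \ref{thm B} gives $\bxk=\bxkp\cap\br(\le Y\!/\le k\lle)$ inside $\br\e k$, where $\br(\le Y\!/\le k\lle)=\bigcap_{y\e\in\e Y}\br(\kappa(y)/k)$.

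Next I would compute the two groups on the right through the invariant embedding $\br\e k\into\Q/\Z$ afforded by the local field $k$. By local class field theory, for a closed point whose residue field is a finite extension $L/k$ the relative Brauer group $\br(L/k)$ is the subgroup of $\br\e k$ of order $[L:k]$; hence $\br(\le Y\!/\le k\lle)$ is the subgroup of order $d_{Y}\defeq\gcd_{y\e\in\e Y}[\kappa(y):k]$. For the \emph{smooth} curve $X^{N}$ the Roquette--Lichtenbaum theorem \cite{roq,lich} identifies $\bxkp=\br(X^{N}\!/k)$ with the subgroup of $\br\e k$ of order $I(\be X^{N}\lle)$. As the finite subgroups of $\Q/\Z$ are the cyclic groups $\tfrac1n\Z/\Z$ and $\tfrac1a\Z/\Z\cap\tfrac1b\Z/\Z=\tfrac1{\gcd(a,b)}\Z/\Z$, I obtain $\#\,\bxk=\gcd\big(I(\be X^{N}\lle),d_{Y}\big)$.

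It remains to identify $\gcd(I(X^{N}),d_{Y})$ with $I(\be X\lle)$, for which I would use that the index of a proper $k$-scheme is the gcd of the residue degrees $[\kappa(x):k]$ of its closed points. Because $\varphi$ restricts to an isomorphism $\widetilde{X}\setminus\widetilde{Y}\isoto X\setminus Y$, the closed points of $X$ lying outside $Y$ and those of $X^{N}$ lying outside $\widetilde{Y}$ have the same residue degrees, of gcd $d_{U}$, say; thus $I(\be X\lle)=\gcd(d_{Y},d_{U})$ and $I(\be X^{N}\lle)=\gcd(d_{\widetilde{Y}},d_{U})$ with $d_{\widetilde{Y}}=\gcd_{\widetilde{y}\e\in\e\widetilde{Y}}[\kappa(\widetilde{y}\,):k]$. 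Every $\widetilde{y}\in\widetilde{Y}$ lies over some $y=\psi(\widetilde{y})\in Y$, and the induced map $\kappa(y)\into\kappa(\widetilde{y}\,)$ is a field extension, so $d_{Y}\mid d_{\widetilde{Y}}$ and a fortiori $I(\be X\lle)\mid d_{\widetilde{Y}}$; an elementary manipulation then yields $\gcd(I(\be X^{N}\lle),d_{Y})=\gcd(d_{\widetilde{Y}},d_{U},d_{Y})=\gcd(d_{U},d_{Y})=I(\be X\lle)$. The step I expect to be the main obstacle is the smooth base case $\#\,\bxkp=I(\be X^{N}\lle)$: the classical Roquette--Lichtenbaum theorem is proved over $p\e$-\le adic fields, so over an arbitrary local field---in particular over a positive-characteristic local field, where the smoothness of $X^{N}$ is a genuine restriction, and over the archimedean local fields, where $\br\e k$ is a proper subgroup of $\Q/\Z$---one must establish the appropriate extension of that theorem rather than merely cite it.
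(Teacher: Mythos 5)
Your proposal is correct and is essentially the paper's own proof (Theorem~\ref{rgn} in Section~\ref{cin}, combined with \eqref{tri0}): reduce to computing the order of $\bxk$, apply Theorem~\ref{thm B} in the form $\bxk=\bxkp\cap\br(\le Y\!/\le k\lle)$ (this is Theorem~\ref{gbk2}, whose proof uses exactly your observation that $\pic\e Y=\pic\e\widetilde{Y}=0$ for finite $k$-schemes), and evaluate both intersectands through the invariant map via \eqref{inv2} and Roquette--Lichtenbaum for the smooth curve $X^{\be N}\!$, obtaining $\#\bxk=\gcd\{I(X^{\be N}),I(\e Y\e)\}$. Two deviations are worth recording. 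First, your endgame identifying $\gcd\{I(X^{\be N}),I(\e Y\e)\}$ with $I(\be X\lle)$ by bookkeeping residue degrees of closed points over $X\setminus Y\simeq\widetilde{X}\setminus\widetilde{Y}$ is correct, but the paper argues more economically: $I(\be X\lle)$ divides $\gcd\{I(X^{\be N}),I(\e Y\e)\}$ by \eqref{divv}, while conversely $\#\bxk$ divides $I(\be X\lle)$ because $\bxk$ is annihilated by the index \eqref{ihr}, and the two divisibilities sandwich to equality without needing $d_Y\mid d_{\widetilde{Y}}$. Second, the obstacle you flag at the end is real and is precisely what Theorem~\ref{roq} supplies: over a local function field the paper runs Lichtenbaum's proof verbatim, replacing Tate's duality theorem for abelian varieties by Milne's extension of it \cite[Theorem III.7.8, p.~285]{adt}, and this is exactly where the smoothness of $X^{\be N}$ enters, since it guarantees that the reduced identity component of the Picard scheme of $X^{\be N}$ is an abelian variety \cite[Corollary 3.2]{fga236}; as for the archimedean fields, the paper excludes them from Section~\ref{cin} altogether, attributing the real case to \cite{bk} and dismissing the complex case as trivial, which is consistent with your caveat that the invariant-map argument requires $\br\e k\simeq\Q/\Z$.
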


If $k$ is a local function field, i.e., a finite extension of $\mathbb{F}_{\be p}((t))$, then the above corollary applies, in particular, to (proper and geometrically integral) local complete intersection $k\e$-\le curves whose Jacobian numbers are less than $p$. See \cite[Theorem 4.10]{iil20}. Regarding the above corollary, we call the reader's attention to the fact that, over an imperfect field such as $\mathbb{F}_{\be p}((t))$, there exist many curves $X$ such that the normal (i.e., regular) curve $X^{\be N}$ is {\it not} smooth. See Examples \ref{ex3} and \ref{las} below.

\section*{Acknowledgements} I thank Otto Overkamp for helpful comments regarding the proof of Proposition \ref{sn} and Michel Brion for providing Lemma \ref{bri}\,.

\section{Preliminaries}\label{pre}

We say that a scheme $X$ is an {\it FA scheme} if
every finite subset of $X$ is contained in an affine open subset of $X$. If $X$ admits an ample invertible sheaf, then $X$ is an FA scheme \cite[II, Corollary 4.5.4]{ega}. In particular, every proper scheme of dimension $1$ over a field is an FA scheme \cite[Lemmas 0A26 and 0B45]{sp}.

\smallskip

If $S$ is a scheme, $\ses$ will denote the category of abelian sheaves on the small \'etale site over $S$. If $f\colon X\to S$ is a morphism of schemes and $r\geq 0$ is an integer, we will write
\begin{equation}\label{pbm}
H^{\le\lle r}\be(\be X\be/\be S\le)\defeq \krn\be[H^{\le r}_{\et}\be( S,\G_{\le m}\lbe)\overset{f^{*}}{\to} H^{\le r}_{\et}\be( X,\G_{\le m}\lbe)],
\end{equation}
where $f^{\lle *}$ is the pullback homomorphism induced by $f$. If $f\colon \spec K\to\spec k$ is the canonical morphism induced by a field extension $K/k$, then the abelian group \eqref{pbm} will be denoted by $H^{\le r}\be(\be K\!/k)$.

\smallskip

We henceforth assume that $S$ is locally noetherian, so that a morphism $q\colon T\to S$ is finite and locally free if, and only if, $q$ is finite and flat \cite[02K9, Lemma 29.48.2]{sp}. A {\it finite and flat quasi-section} of $f$ is a finite and faithfully flat morphism $q\colon T\to S$ such that $q=f\circ h$ for some morphism $h\colon T\to X$. If this is the case, then the induced morphism $(h, 1_{T})_{S}\colon T \to X_{T}$ is a section of $f_{T}\colon X_{T}\to T$. If $f$ admits a finite and flat quasi-section of constant degree, then the {\it index $I(\lbe X\be/\lbe S\e)$ of $f$} is the greatest common divisor of the degrees of all finite and flat quasi-sections of $f$ of constant degree. Since the proof of \cite[Proposition 3.8.1, p.~96]{ctsk} is valid in arbitrary cohomological degree, \cite[Remark 3.1]{ga17} remains valid if the word {\it \'etale} in [loc.cit.] is replaced with the word {\it flat}. Consequently, if $I(\lbe X\be/\lbe S\e)$ is defined, then
\begin{equation}\label{ihr}
I(\lbe X\be/\lbe S\e)\be\cdot\be H^{\le\lle r}\lbe(\lbe X\be/\be S\le\lle)=0 
\end{equation}
for every $r\geq 1$ by (the flat analog of) \cite[Remark 3.1(d)]{ga17}. If $\widetilde{f}\colon \widetilde{X}\to S$ is another morphism of schemes and $\varphi\colon \widetilde{X}\to X$ is an $S$-morphism, then a finite and flat quasi-section of $\widetilde{f}$ is also a finite and flat quasi-section of $f$. Consequently, if $I(\lbe \widetilde{X}\be/\lbe S\e)$ is defined, then $I(\lbe X\!/\lbe S\e)$ is also defined and divides $I(\lbe \widetilde{X}\be/\lbe S\e)$. We also note that, if $f$ is finite and faithfully flat of constant degree, then $I(\lbe X\!/\lbe S\le)$ is defined since we may take $q=f$ and $h={\rm Id}_{X}$ above.

If $X$ is a scheme over a field $k$, we will write $I(\lbe X\lle)$ for $I\lbe\lle(\lbe X\be/\spec k\le)$. If $X$ is algebraic, i.e., of finite type over $k$, then $I(\be X\lle)$ is the greatest common divisor of the degrees $[\le\kappa(x)\colon\! k\e]$ of all closed points $x$ of $X$. Equivalently, $I(\lbe X\lle)$ is the least positive degree of a $0\e$-\le cycle on $X$. If $K/k$ is a finite field extension and $X=\spec K$, then \eqref{ihr} yields
\begin{equation}\label{ihrk}
[\le K\lbe\colon\! k\e]\be\cdot\be H^{\le r}\be(\lbe K\!/k)=0
\end{equation}
for every $r\geq 1$.

\smallskip

We now recall that the (\'etale) {\it relative Picard functor of $X$ over $S$} is the \'etale sheaf $\picxs$ on $S$ associated to the presheaf $(\textrm{Sch}/S\e)\to \mathbf{Ab}, (T\to S\le)\mapsto\pic X_{T}$. We have
$\picxs=R^{\le 1}_{\et}\le f_{\lbe *}\G_{m,X}$ by \cite[\S2]{klei}. Now, for any scheme $Y$, let $\br\e Y=H^{2}(Y_{\et},\G_{\le m}\lbe)$ be the cohomological Brauer group of $Y$ and let $\brxs=R^{\le 2}_{\et}\le f_{\lbe *}\G_{m,\le X}$ be the sheaf on $S_{\et}$ associated to the presheaf $(T\to S)\mapsto \br X_{T}$. Set
\begin{equation}\label{br1}
\bro=\krn\!\!\left[\br X\be\to\be\brxs(S\e)\e\right]\!,
\end{equation}
where the indicated map is an instance of the canonical adjoint homomorphism $P(S\le)\to P^{\#}\be(S\le)$, where $P$ is a presheaf of abelian groups on $(\textrm{Sch}/S\le)$ and $P^{\le\#}$ is its associated (\'etale) sheaf \cite[Remark, p.~46]{t}. The pullback map $f^{\le *}\colon\br\le S\to \br X$ factors through $\bro\subseteq\br X$ \cite[p.~2754]{ga18}. Set 
\begin{equation}\label{relb}
\bxs=\krn\!\!\left[\e\br\le S\to \bro\le\right]
\end{equation}
and
\begin{equation}\label{bra}
\hskip 1.11cm\bra=\cok\!\be\left[\e\lle\br\le S\to\bro\e\right]\!.
\end{equation}
Since $\bxs=H^{\le\lle 2}\lbe(\lbe X\be/\be S\e)$ by \eqref{pbm}, the formula \eqref{ihr} yields $I(\lbe X\be/\lbe S\e)\be\cdot\be \bxs=0$ when $I(\lbe X\be/\lbe S\e)$ is defined.

\smallskip

\begin{remarks}\label{int} Let $k$ be a field.
\begin{enumerate}
\item[(a)] If $f\colon X\to\spec k$ is quasi-compact and quasi-separated, then $\picxk(k\e)=(\pic\xs\le)^{\lbe\g}$ and $\br_{\be X\be/k}\lbe(k\e)=(\br\xs\le)^{\lbe\g}$ by \cite[II, Corollary 2.2(ii), p.~94, and Theorem 6.4.1, p.~128]{t}. Therefore $\bro=\krn\!\be\left[\e\br X\be\to\be\br\xs\e\right]$ \eqref{br1} is the algebraic Brauer group of $X$ over $k$.
\item[(b)] If $X$ is a proper algebraic curve over $k$, then $\br\xs=0$ by \cite[Corollary 5.8, p.~132]{gb}, whence $\bro=\br X$.
\item[(c)] By (b) and \cite[Theorem 5.6.1(vii), p.~148]{ctsk}, $\br_{\be 1}\lle\Pn^{\le 1}_{\!k}=\br\e\Pn^{\le 1}_{\!k}=\br\e k$.
\end{enumerate}
\end{remarks}

\begin{lemma}\label{ker-cok} If $\mathcal A$ is an abelian category and $f$ and $g$ are morphisms in $\mathcal A$ such that $g\be\circ\!\be f$ is defined, then there exists a canonical exact sequence in $\mathcal A$ 
\[
0\to\krn f\to\krn\lbe(\e\lle g\be\circ\!\be f\e)\to\krn g\to\cok f\to\cok\be(\e\lle g\be\circ\!\be f\e)\to\cok g\to 0.
\]
\end{lemma}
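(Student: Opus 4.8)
The plan is to recognise the asserted sequence as the classical kernel--cokernel exact sequence attached to a composable pair, and to prove it by writing down its six arrows explicitly and then checking exactness at each of the six terms. Write $f\colon A\to B$ and $g\colon B\to C$, so that $g\circ f\colon A\to C$. Since every one of the verifications below is an elementary diagram chase, I would first legitimise such chases in the arbitrary abelian category $\mathcal A$: either by Mac Lane's calculus of members, or by passing to the (essentially small) full abelian subcategory generated by $A$, $B$, $C$ and the finitely many morphisms in sight and invoking the Freyd--Mitchell embedding theorem to reduce to a category of modules, where genuine elements are available. An exact, fully faithful embedding preserves and reflects kernels, cokernels and exactness, so no generality is lost.

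Next I would record the six canonical morphisms. The arrow $\krn f\into\krn(g\circ f)$ is the inclusion of subobjects of $A$ (if $fx=0$ then $gfx=0$); the arrow $\krn(g\circ f)\to\krn g$ is $f$ restricted to $\krn(g\circ f)$ and corestricted to $\krn g$ (legitimate since $g(fx)=0$ there); the connecting arrow $\krn g\to\cok f$ is the composite of the inclusion $\krn g\into B$ with the projection $B\onto\cok f=B/\img f$; the arrow $\cok f\to\cok(g\circ f)$ is induced by $g$, and is well defined because $g(\img f)=\img(g\circ f)$; and the final arrow $\cok(g\circ f)\onto\cok g$ is the canonical projection $C/\img(g\circ f)\to C/\img g$, available because $\img(g\circ f)\subseteq\img g$. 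Each arrow is manifestly functorial in the pair $(f,g)$, which will yield the canonicity asserted in the statement.

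Finally I would check exactness at all six spots. At $\krn f$ the map is a monomorphism and at $\cok g$ it is an epimorphism, both by inspection. Exactness at $\krn(g\circ f)$ is formal: the image of $\krn f\to\krn(g\circ f)$ and the kernel of $\krn(g\circ f)\to\krn g$ both equal $\krn f$, viewed as a subobject of $A$ via $\krn f\subseteq\krn(g\circ f)$. Dually, exactness at $\cok(g\circ f)$ holds because the image of $\cok f\to\cok(g\circ f)$ and the kernel of $\cok(g\circ f)\to\cok g$ both equal $\img g/\img(g\circ f)\subseteq C/\img(g\circ f)$. The real content sits at the two terms flanking the connecting map. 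At $\krn g$ one must show $f\!\left(\krn(g\circ f)\right)=\krn g\cap\img f$; the point is that any $b=fx$ lying in $\krn g$ has $gfx=gb=0$, so $x\in\krn(g\circ f)$. Dually, at $\cok f$ one must show $\krn\!\big[\cok f\to\cok(g\circ f)\big]=(\krn g+\img f)/\img f$, i.e.\ that $gb\in\img(g\circ f)$ forces $b\in\krn g+\img f$, which follows by subtracting from $b$ an element of $\img f$ having the same image under $g$. These two dual computations are the crux; everything else is bookkeeping, and the functoriality of each arrow gives the canonicity. One could instead derive the whole sequence from the snake lemma applied to a suitable morphism of short exact sequences, but the direct verification above seems the most transparent.
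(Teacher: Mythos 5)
Your proof is correct, but it takes a different route from the paper: the paper does not prove the lemma at all, it simply cites Beyl's article \emph{The connecting morphism in the kernel-cokernel sequence} (reference [Bey, 1.2]), where the sequence is constructed and its exactness established. What you do instead is give a self-contained verification: you define the six canonical arrows, justify element chases in an arbitrary abelian category (via Mac Lane's calculus of members, or via Freyd--Mitchell applied to an essentially small full abelian subcategory --- a necessary precaution, since the embedding theorem requires smallness, and you handle it correctly), and then check exactness at each spot. Your identification of where the real content lies is accurate: exactness at $\krn g$ amounts to $f(\krn(g\circ f))=\krn g\cap\img f$, and exactness at $\cok f$ amounts to the dual statement that $gb\in\img(g\circ f)$ forces $b\in\krn g+\img f$; both of your arguments for these are correct, and the remaining four verifications are indeed routine. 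The trade-off is the usual one: the paper's citation is economical and delegates the bookkeeping to the literature, while your argument makes the lemma self-contained and exhibits the canonicity (functoriality in the pair $(f,g)$) explicitly, which is what the word ``canonical'' in the statement requires and what the paper implicitly relies on when it applies the lemma to commutative diagrams such as \eqref{zwe}. Your closing remark that the sequence can alternatively be extracted from the snake lemma is also sound, though as you note it requires a version of the snake lemma whose outer rows are not short exact, plus a small extra argument to recover the initial segment $0\to\krn f\to\krn(g\circ f)$.
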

\begin{proof} See, for example, \cite[1.2]{bey}.
\end{proof}

If $k$ is a field, a {\it $k$-split unipotent $k$-group} is an algebraic $k$-group that admits a composition series whose successive quotients are each 
$k$-isomorphic to $\G_{ a,\le k}$.

\begin{lemma}\label{spl} Let $k$ be a field and let $U$ be a commutative, smooth, connected and unipotent algebraic $k$-group.
\begin{enumerate}
\item[(i)] If $U$ is $k$-split, then $H^{\le r}_{\le\et}\lbe(k,U\le)=0$ for every $r\geq 1$.
\item[(ii)] $H^{\le r}_{\le\et}\lbe(k,U\le)=0$ for every $r\geq 2$. 
	\end{enumerate}
\end{lemma}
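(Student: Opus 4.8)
The plan is to reduce both parts to the basic vanishing $H^{r}_{\et}(k,\G_{a})=0$ for all $r\ge 1$, and then propagate it by dévissage using Lemma \ref{ker-cok} (or, equivalently, long exact cohomology sequences). That basic vanishing holds because $\G_{a}$ is the quasi-coherent sheaf $\cO_{\spec k}$, so its \'etale and Zariski cohomologies agree and the latter vanishes in positive degrees on the affine $0$-dimensional scheme $\spec k$; alternatively it follows from the normal basis theorem, which exhibits $\ksep$ as an induced $\g$-module. For part (i), I would induct on the length $m$ of a composition series $0=U_{0}\subset\dots\subset U_{m}=U$ with each $U_{i}/U_{i-1}\cong\G_{a,\le k}$: all the $U_{i}$ are smooth connected $k$-subgroups, and (as $U$ is commutative) normal, so for each $r\ge 1$ the sequence $0\to U_{m-1}\to U\to\G_{a}\to 0$ gives an exact piece $H^{r}_{\et}(k,U_{m-1})\to H^{r}_{\et}(k,U)\to H^{r}_{\et}(k,\G_{a})$ whose flanking terms vanish, the first by induction. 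Hence $H^{r}_{\et}(k,U)=0$ for $r\ge 1$.

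For part (ii) the case $\car k=0$ is immediate, since there $U\cong\G_{a}^{\dim U}$ is $k$-split and (i) applies. So assume $\car k=p>0$. The essential difficulty is that over an imperfect $k$ the group $U$ need not be $k$-split, so no $\G_{a}$-composition series over $k$ exists; this is precisely why the conclusion weakens from $r\ge 1$ to $r\ge 2$. I would explicitly avoid the tempting shortcut of passing to the perfect closure $k_{\mathrm{perf}}$, over which $U$ does become split: \'etale cohomology is invariant under the universal homeomorphism $\spec k_{\mathrm{perf}}\to\spec k$ only for the sheaf transported through the equivalence of topoi, which is \emph{not} the sheaf represented by $U_{k_{\mathrm{perf}}}$, so the naive comparison would spuriously force $H^{1}=0$. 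Everything must therefore be done over $k$ itself, via two reductions.

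First I would reduce to the case $pU=0$. Since $U_{\kbar}$ is split with a composition series of length $\dim U$ and $\G_{a}$-quotients, each killed by $p$, dévissage shows $U_{\kbar}$, and hence $U$, is annihilated by $p^{\dim U}$. The descending chain $U\supseteq pU\supseteq p^{2}U\supseteq\dots\supseteq p^{\dim U}U=0$ consists of smooth connected $k$-subgroups (images of the homomorphisms $[p^{i}]$), and each quotient $p^{i}U/p^{i+1}U$ is a smooth connected commutative unipotent $k$-group killed by $p$. Applying the long exact sequence to the successive extensions, it suffices to prove the claim for groups killed by $p$.

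Second, for $U$ with $pU=0$ I would invoke the standard structural fact that such a $U$ embeds as a smooth connected $k$-subgroup of a vector group $V=\G_{a}^{\le n}$, and that the quotient $Q=V/U$, being a quotient of the $k$-split group $V$, is itself $k$-split. By part (i), $H^{r}_{\et}(k,V)=H^{r}_{\et}(k,Q)=0$ for all $r\ge 1$, so the long exact sequence attached to $0\to U\to V\to Q\to 0$ contains the exact piece $H^{r-1}_{\et}(k,Q)\to H^{r}_{\et}(k,U)\to H^{r}_{\et}(k,V)$; for $r\ge 2$ both flanking groups vanish, whence $H^{r}_{\et}(k,U)=0$. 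At $r=1$ the same sequence instead yields $\cok\be[\e V(k)\to Q(k)\e]$, which need not vanish, consistent with the sharpness of the bound. The main obstacle is this last step: securing, over a possibly imperfect base field, the embedding of a $p$-torsion unipotent group into a vector group with $k$-split quotient; once that structural input is granted, the cohomological bookkeeping is routine.
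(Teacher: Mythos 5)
Your proof of (i) coincides with the paper's: the same induction on a composition series, reducing to $H^{\le r}_{\le\et}\lbe(k,\G_{a,\le k}\lbe)=0$. For (ii), however, you take a genuinely different route, and a correct one. The paper settles the perfect-field case by Borel's theorem that $U$ is then $k$-split (\cite[Corollary 15.5(ii)]{bo}), and in characteristic $p>0$ it combines two cited facts: $U$ is annihilated by a power of $p$ \cite[XVII, Theorem 3.5]{sga3}, and the $p\e$-cohomological dimension of ${\rm Gal}\le(\ksep\be/k)$ is at most $1$ \cite[II, \S2.2, Proposition 3]{scg}; since $H^{\le r}_{\le\et}\lbe(k,U\le)=H^{\le r}\lbe({\rm Gal}(\ksep\be/k),U(\ksep))$ for smooth $U$, vanishing in degrees $\geq 2$ is immediate for \emph{any} $p\e$-primary module. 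You instead d\'evissage to the case $pU=0$ (your observation that the subgroups $p^{\lle i}U$ are smooth and connected, being images of homomorphisms from a smooth connected group, is correct) and then invoke the structure theorem for smooth connected commutative $p\e$-torsion unipotent groups: such a $U$ is the kernel of a $p\e$-polynomial map $\G_{a}^{\le n+1}\to\G_{a}$, so it sits in an extension $0\to U\to V\to Q\to 0$ with $V$ and $Q$ both $k$-split, and (i) plus the long exact sequence (legitimate, since $V\to Q$ is smooth surjective and hence an epimorphism of \'etale sheaves) finishes the argument. The two inputs you flag as "granted" do need citations but are standard: the embedding is \cite[Proposition B.1.13]{cgp} (which even gives $Q\cong\G_{a}$ directly, so you can bypass the general fact that quotients of $k$-split groups are $k$-split, itself in \cite[Theorem 15.4]{bo}). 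Comparing the two: the paper's argument is shorter given its inputs and needs no structure theory beyond $p\e$-power torsion; yours avoids Galois cohomological dimension entirely, stays inside the theory of unipotent groups, and has the merit of making visible exactly where $r=1$ fails, namely in $\cok\lbe[\e V(k)\to Q(k)\e]$, which is the source of the (possibly huge) $H^{\le 1}$ of wound groups. Your warning against passing to the perfect closure is also correct and worth keeping: the \'etale topoi of $k$ and its perfect closure agree, but the sheaf represented by $U$ does not correspond under that equivalence to the sheaf represented by the base-changed group, so no such shortcut exists.
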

\begin{proof} Assertion (i) follows from the case $U=\G_{a,\le k}$ \cite[X, Proposition 1, p.~150]{ser2} by induction on the length of a composition series for $U$. In (ii), if $\car\le k=0$ (or, more generally, if $k$ is perfect), then $U$ is $k$-split by \cite[Corollary 15.5(ii), p.~205]{bo}, whence (ii) follows from (i). Thus in (ii) we may assume that $\car\le k=p>0$. Since $U$ is annihilated by some power of $p$ \cite[XVII, Theorem 3.5]{sga3} and the $p\e$-\e cohomological dimension of ${\rm Gal}\le(\ksep\be/k)$ is $\leq 1$ \cite[II, \S2.2, Proposition 3, p.~86]{scg}, we have $H^{\le r}_{\le\et}\lbe(k,U\le)=H^{\lle r}\lbe({\rm Gal}(\ksep\be/k),U(\ksep)\le)=0$ for every $r\geq 2$, as asserted. 
\end{proof}

The following lemma and its proof were provided by Michel Brion.

\begin{lemma}\label{bri} Let $k$ be a field and let $X$ be a geometrically integral $k\le$-\le scheme. Then the normalization $X^{\lle\be N}$ of $X$ is geometrically integral.
\end{lemma}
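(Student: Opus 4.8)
The plan is to verify the two constituents of geometric integrality for the normalization $X^{N}$ separately, namely that $X^{N}\times_{k}\kbar$ is irreducible and that it is reduced, exploiting throughout that the normalization morphism $\nu\colon X^{N}\to X$ is birational. Thus $X^{N}$ is again integral (it is normal and irreducible), with the same function field $K\defeq k(X^{N})=k(X)$. I would use two standard criteria: an integral $k$-scheme is geometrically irreducible precisely when $k$ is separably algebraically closed in its function field, and a field extension $K/k$ is separable precisely when $K\otimes_{k}\kbar$ is reduced.

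First I would treat geometric irreducibility. Since $X$ is geometrically integral it is in particular geometrically irreducible, so $k$ is separably algebraically closed in $K$. As $X^{N}$ is integral with the \emph{same} function field $K$, the criterion applies verbatim and shows that $X^{N}\times_{k}\kbar$ is irreducible. (Alternatively one may first base change along the separable, hence pro-\'etale, extension $\ks/k$, along which normalization commutes with base change, reducing to the case $k=\ks$; but the function-field criterion makes this detour unnecessary.)

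Next, and here lies the real content, I would show that $Z\defeq X^{N}\times_{k}\kbar$ is reduced, by verifying Serre's criterion $R_{0}+S_{1}$. Condition $R_{0}$ is immediate: because $X$ is geometrically reduced, $K/k$ is separable, so $K\otimes_{k}\kbar$ is reduced and the local rings of $Z$ at its generic points (which lie over the generic point of $X^{N}$) are fields. For $S_{1}$ I would invoke normality: $X^{N}$ is normal, hence satisfies $S_{2}$ and a fortiori $S_{1}$; the projection $p\colon Z\to X^{N}$ is flat (being the base change of the field extension $\kbar/k$), and its fibres $\spec\!\big(\kappa(x)\otimes_{k}\kbar\big)$ carry no embedded points, so Serre's condition $S_{1}$ ascends along $p$ (flat ascent of Serre's conditions, cf. \cite[IV, 6.4.1]{ega}). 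Thus $Z$ is $S_{1}$, and with $R_{0}$ this forces $Z$ to be reduced. Combined with the previous paragraph, $Z=X^{N}\times_{k}\kbar$ is integral, i.e. $X^{N}$ is geometrically integral.

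The hard part is exactly the reducedness step, and the reason it demands care is that over an imperfect field the formation of the normalization does \emph{not} commute with the purely inseparable base change $-\otimes_{k}\kbar$; one cannot simply write $X^{N}\times_{k}\kbar=(X\times_{k}\kbar)^{N}$. The argument circumvents this by extracting $R_{0}$ from separability of $K/k$ and $S_{1}$ from normality of $X^{N}$ via flat ascent, rather than from any compatibility of normalization with base change. The point to watch is the fibre condition in the $S_{1}$-ascent, that $\kappa(x)\otimes_{k}\kbar$ has no embedded primes, which is where imperfection could otherwise intervene; in the situations of interest $X$ is of finite type over $k$, so these fibres are Noetherian and the ascent applies. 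Finally, note that the method yields only $R_{0}+S_{1}$ and not $R_{1}+S_{2}$ for $Z$: this is unavoidable, since $X^{N}\times_{k}\kbar$ need not be normal (for instance when $X^{N}$ is regular but not smooth over $k$), in line with the examples invoked later in the paper.
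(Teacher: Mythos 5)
Your argument is correct where it applies, but it takes a genuinely different --- and much heavier --- route than the paper's. The paper reduces to $X$ affine, writes $X^{N}=\spec R$ with $R$ a subring of $k(X^{N})=k(X)$, and simply tensors: by flatness of $L$ over $k$ the inclusion $R\subseteq k(X)$ stays injective after $-\otimes_{k}L$, and $k(X)\otimes_{k}L$ is an integral domain for every field extension $L/k$ precisely because $X$ is geometrically integral; a subring of a domain is a domain, so $X^{N}\times_{k}L$ is integral. Irreducibility and reducedness thus come simultaneously, for every $L$, with no finiteness hypotheses at all. Your irreducibility step is close in spirit to this (both arguments live entirely in the function field), but your reducedness step replaces the one-line tensor argument with Serre's criterion plus flat ascent of $S_{1}$. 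That part is sound: the fibres $\kappa(x)\otimes_{k}\overline{k}$ are zero-dimensional because $\overline{k}/k$ is algebraic, hence have no embedded primes, so $S_{1}$ ascends along the flat projection, and $R_{0}$ follows from separability of $k(X)/k$ as you say. The cost is generality: Serre's criterion and the depth formulas require $X^{N}\times_{k}\overline{k}$ and the fibres to be locally Noetherian, which you obtain from finite type of $X$ together with finiteness of normalization (so that $X^{N}$ is again of finite type) --- but the lemma as stated carries no finite-type hypothesis, and the paper's proof needs none. Since the paper only applies the lemma to projective schemes, your proof does cover its applications. One small correction to your closing remark: $S_{2}$ ascends just as well as $S_{1}$, because zero-dimensional Noetherian local rings are Cohen--Macaulay, so the fibres satisfy $S_{k}$ for every $k$; what fails to ascend is $R_{1}$, since over an imperfect field the fibres may be non-reduced and hence non-regular. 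It is this failure of $R_{1}$, not of $S_{2}$, that explains why $X^{N}\times_{k}\overline{k}$ can fail to be normal.
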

\begin{proof} We may assume that $X$ is affine. Then $X^{\lle\be N}=\spec R$ is also
affine. Since $R$ is a subring of the function field $k(X^{\lle\be N})$, which coincides with $k(X)$ by \cite[035E, Lemma 0BXC]{sp}, $R\otimes_{k}L$ is a subring of $k(X)\otimes_{k}L$ for any field extension $L/k$. Since $k(X)\otimes_{k}L$ is integral, the lemma follows.	
\end{proof}

\section{Pinched schemes}\label{piag}

Recall that $S$ is a locally noetherian scheme. Let $\widetilde{X}$ be a reduced FA scheme, let $\widetilde{f}\colon \widetilde{X}\to S$ be a proper and flat morphism with integral geometric fibers and let $\widetilde{Y}$ be a closed subscheme of $\widetilde{X}$ which is finite, faithfully flat and of constant degree over $S$. Further, let $Y$ be a finite and faithfully flat $S$-scheme and let $\psi\colon \widetilde{Y} \to Y$ be a schematically dominant morphism of $S$-schemes, i.e, the induced morphism of Zariski sheaves $\cO_{\le Y}\to\psi_{*}\cO_{\le\widetilde{Y} }$ is {\it injective} \cite[Proposition 5.4.1 and Definition 5.4.2, pp.~283-284]{ega1}. By \cite[\S2.1]{bri15}, there exists a cartesian and cocartesian diagram of $S$-schemes
\begin{equation}\label{pin}
\xymatrix{\widetilde{Y}\,\ar@{^{(}->}[r]^{\widetilde{\iota}}\ar[d]_{\psi}&\widetilde{X}\ar[d]_{\varphi}\\
Y\,\ar@{^{(}->}[r]^(.45){\iota}&X,
}
\end{equation}
where: $\widetilde{\iota}$ is the inclusion, $\iota$ is a closed immersion, $\varphi$ is finite and surjective, the morphism $\widetilde{X} \setminus \widetilde{Y} \to X \setminus Y$ induced by $\varphi$ is an isomorphism, $X$ is an FA scheme and the structural morphism $f\colon X\to S$ is proper and flat with integral geometric fibers. See \cite[comment after the proof of Lemma 2.1]{bri15}. We say that 
\emph{$X$ is obtained by pinching $\widetilde{X}$ along $\widetilde{Y}$ via $\psi$}.
Note that, since $I(\e \widetilde{Y}\!/\lbe S\e)$ is defined, $I(\lbe \widetilde{X}\be/\lbe S\e)$, $I(\e Y\be/\lbe S\e)$ and $I(\lbe X\be/\lbe S\e)$ are also defined and the following holds
\begin{equation}\label{divv}
I(\lbe X\be/\lbe S\e)|\gcd\{ I(\lbe \widetilde{X}\be/\lbe S\e),I(\e Y\be/\lbe S\e) \}.
\end{equation}
The pinching diagram \eqref{pin} induces the right-hand square of the following commutative diagram of abelian groups
\[
\xymatrix{\br\e S\ar@{=}[d]\ar[r]^{f^{*}}&\br\le X\ar[d]^(.4){\iota^{\lbe *}}\ar[r]^{\varphi^{*}}&\br\e \widetilde{X}\ar[d]^(.4){\widetilde{\iota}^{\,*}}\\
\br\e S\ar[r]^{(\le f\circ\e \iota\le)^{*}}&\br\le Y\ar[r]^{\psi^{\le *}}&\br\e \widetilde{Y}.
}
\]
Applying Lemma \ref{ker-cok} to the rows of the above diagram, we obtain an exact and commutative diagram of abelian groups
\begin{equation}\label{zwe}
\xymatrix{0\ar[r]&\bxs\ar[d]^{\subseteq}\ar[r]^{\subseteq}&\bxsp\ar[d]^{\subseteq}\ar[r]^{f^{\lle *}}&\br\lbe(\be\widetilde{X}\!/\be X\lle)\ar[d]^(.4){\iota^{\lbe *}}\\
0\ar[r]&\bys\ar[r]^{\subseteq}&\bysp\ar[r]^{(\le f\circ\e \iota\le)^{*}}&\br\lbe(\e\widetilde{Y}\!/Y\le),
}
\end{equation}
where all groups are instances of \eqref{relb} and all arrows on the left-hand square are inclusion maps between subgroups of $\br\e S$.

In the next statement, $R_{\e Y\be /S}$ (respectively, $R_{\, \widetilde{Y}\be/S}$) denotes the Weil restriction functor associated to the finite and locally free morphisms $Y\to S$ (respectively, $\widetilde{Y}\to S\e$). See \cite[\S7.6]{blr}.

\begin{theorem}\label{ex}{\rm (Brion\lle)} The pinching diagram \eqref{pin} induces an exact sequence in $\ses$
\[
0\to R_{\e Y\be /S}\lle(\G_{m, Y}\be)\overset{\psi^{\lle *}}{\to} R_{\, \widetilde{Y}\be/S}(\G_{m, \widetilde{Y}}\lbe)\to\picxs\overset{\varphi^{*}}{\to} \picxst\to 0.
\]
\end{theorem}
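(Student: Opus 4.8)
The plan is to produce the asserted sequence by applying $Rf_{*}$ to a short exact sequence of $\G_{m}$-sheaves on $X$ that encodes the gluing. Since \eqref{pin} is cocartesian, Ferrand's construction presents $\cO_{X}$ as the fibre product of sheaves of rings $\varphi_{*}\cO_{\widetilde{X}}\times_{(\varphi\circ\widetilde{\iota})_{*}\cO_{\widetilde{Y}}}\iota_{*}\cO_{Y}$ on $X$, in which $\varphi_{*}\cO_{\widetilde{X}}\to(\varphi\circ\widetilde{\iota})_{*}\cO_{\widetilde{Y}}$ is surjective because $\widetilde{\iota}$ is a closed immersion. Passing to units, I would first establish the exactness of the sequence of abelian \'etale sheaves on $X$
\[
1\to \G_{m,X}\to \varphi_{*}\G_{m,\widetilde{X}}\oplus\iota_{*}\G_{m,Y}\overset{\delta}{\to} (\varphi\circ\widetilde{\iota})_{*}\G_{m,\widetilde{Y}}\to 1,
\]
where $\delta(a,b)=\widetilde{\iota}^{\,*}a\cdot(\psi^{*}b)^{-1}$, and then deduce the theorem from the long exact sequence of $Rf_{*}$.

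The exactness of the displayed sequence is the technical core. Its left-exact part is formal: for any fibre product of rings $A=B\times_{C}(A')$ one has $A^{*}=B^{*}\times_{C^{*}}(A')^{*}$, which yields injectivity on the left and exactness at the middle term. The surjectivity of $\delta$ must be checked on stalks, i.e.\ after the flat base change to a strictly henselian local ring $\cO_{X,\bar{x}}^{\,\rm sh}$. Flatness preserves the fibre-product presentation (and the surjection $B\to C$), so one again obtains a Milnor square, whose units--Picard Mayer--Vietoris sequence identifies the cokernel of $\delta$ on the stalk with a subgroup of the Picard group of $\cO_{X,\bar{x}}^{\,\rm sh}$; the latter vanishes because the ring is local. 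Hence $\delta$ is surjective as a map of \'etale sheaves.

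With this sequence in hand I would apply $Rf_{*}$, using two standard identifications. First, because $f$ and $\widetilde{f}$ are proper and flat with geometrically integral fibres, $f_{*}\cO_{X}=\cO_{S}$ universally (constant $h^{0}=1$, cohomology and base change), so $f_{*}\G_{m,X}=\G_{m,S}=\widetilde{f}_{*}\G_{m,\widetilde{X}}$ with the comparison map the identity. Second, $\varphi$, $\iota$ and $\varphi\circ\widetilde{\iota}$ are finite, so their higher \'etale direct images vanish; combined with the Leray spectral sequence and $\picxs=R^{1}f_{*}\G_{m,X}$ this gives $R^{1}f_{*}\varphi_{*}\G_{m,\widetilde{X}}=\picxst$, $f_{*}\iota_{*}\G_{m,Y}=R_{Y/S}(\G_{m,Y})$, $f_{*}(\varphi\circ\widetilde{\iota})_{*}\G_{m,\widetilde{Y}}=R_{\widetilde{Y}/S}(\G_{m,\widetilde{Y}})$, while the $R^{1}$-terms attached to the finite schemes $Y$ and $\widetilde{Y}$ all vanish. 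The long exact sequence therefore reads
\[
0\to\G_{m,S}\to\G_{m,S}\oplus R_{Y/S}(\G_{m,Y})\to R_{\widetilde{Y}/S}(\G_{m,\widetilde{Y}})\to\picxs\overset{\varphi^{*}}{\to}\picxst\to 0 .
\]
The first map is $v\mapsto(v,v|_{Y})$, hence split injective onto the first summand; cancelling it (Lemma \ref{ker-cok}, or directly) collapses the sequence to the asserted four-term one, the surviving arrow $R_{Y/S}(\G_{m,Y})\to R_{\widetilde{Y}/S}(\G_{m,\widetilde{Y}})$ being $\psi^{*}$.

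I expect the surjectivity of $\delta$ to be the main obstacle: it is exactly the point where one must know that the conductor square stays a Milnor square under strict henselization and invoke the triviality of Picard groups of strictly henselian local rings (equivalently, that a line bundle on $X$ which becomes trivial on $\widetilde{X}$ and on $Y$ compatibly along $\widetilde{Y}$ is \'etale-locally trivial). The remaining ingredients---exactness of pushforward along finite morphisms and the universal identity $f_{*}\cO_{X}=\cO_{S}$---are routine, but must be applied in the relative, sheaf-theoretic setting rather than on global sections.
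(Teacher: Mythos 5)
Your proposal is correct, but it is worth noting that the paper does not actually prove Theorem \ref{ex}: the proof there is a citation of \cite[Corollary 2.3]{bri15}, so the relevant comparison is with Brion's own argument. Brion works at the level of presheaves on $(\mathrm{Sch}/S)$: for each base change $T\to S$ the pinching diagram \eqref{pin} remains a pinching diagram (this uses $S$-flatness of $\widetilde{X}$, $Y$, $\widetilde{Y}$), Ferrand's units--Picard Mayer--Vietoris sequence is applied to each $X_T$, the terms $\cO(X_T)^*=\cO(T)^*=\cO(\widetilde{X}_T)^*$ are cancelled, and the resulting exact sequence of presheaves is \'etale-sheafified, using that the sheafifications of $T\mapsto\Pic(Y_T)$ and $T\mapsto\Pic(\widetilde{Y}_T)$ vanish because $Y$ and $\widetilde{Y}$ are finite over $S$. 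You instead build a short exact sequence of sheaves on $X_{\et}$ from the Milnor-square presentation $\cO_X=\varphi_*\cO_{\widetilde{X}}\times_{(\varphi\circ\widetilde{\iota})_*\cO_{\widetilde{Y}}}\iota_*\cO_Y$, prove surjectivity on strictly henselian stalks via Milnor patching and $\Pic$ of a local ring being trivial, and then push forward by $Rf_*$, using acyclicity of finite morphisms, $f_*\G_{m,X}=\G_{m,S}=\widetilde{f}_*\G_{m,\widetilde{X}}$, and $\picxs=R^1 f_*\G_{m,X}$, $\picxst=R^1\widetilde{f}_*\G_{m,\widetilde{X}}$; the final cancellation of the split copy of $\G_{m,S}$ is carried out correctly. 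The two routes rest on the same pillars (Ferrand's conductor square, triviality of Picard groups of local or semi-local rings, finiteness of $Y,\widetilde{Y}$ over $S$), but yours has the advantage of needing base-change compatibility only along the flat maps $\spec\cO_{X,\bar{x}}^{\,\rm sh}\to X$ rather than along arbitrary $T\to S$, while Brion's is closer to the functor-of-points definition of $\picxs$ and avoids invoking the identification of the relative Picard sheaf with $R^1f_*\G_{m}$ on the small \'etale site.
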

\begin{proof} See \cite[Corollary 2.3]{bri15}.
\end{proof}

The exact sequence of the theorem induces the following exact sequences in $\ses$: 
\begin{equation}\label{uno}
0\to R_{\e Y\be/S}(\G_{m,\lle Y})\overset{\psi^{*}}{\to} R_{\e \widetilde{Y}\be/S}(\G_{m,\lle  \widetilde{Y}}\lbe)\to\mu^{\widetilde{Y}\!/\le Y}\to 0
\end{equation}
and
\begin{equation}\label{dos}
0\to \mu^{\widetilde{Y}\!/\le Y}\to\picxs\overset{\varphi^{*}}{\to} \picxst\to 0,
\end{equation}
where 
\begin{equation}\label{myy}
\mu^{\widetilde{Y}\!/\le Y}\defeq R_{\e \widetilde{Y}\be/S}(\G_{m\lle,\lle \widetilde{Y}}\lbe)/\psi^{*}\lbe(R_{\e Y/S}(\G_{m\lle, Y}))\in\ses.
\end{equation}
If $(\e Y,\widetilde{Y}\e)=(\e\spec A\lle,\le\spec B\le)$, we will write $\mu^{B/\lbe A}$ for $\mu^{\widetilde{Y}\!/\le Y}\!$.

By \cite[Theorem 6.4.2(ii), p.~128]{t}, $H^{\le r}_{\et}\lbe(S,R_{\e Y\!/\lbe S}(\G_{m,\lle Y}\lbe))=H^{\le r}_{\et}\lbe(\le Y,\G_{\le m}\lbe)$
for every integer $r\geq 0$, and similarly for $\widetilde{Y}$. Thus \eqref{uno} and \eqref{dos} induce the following exact sequences of abelian groups:
\begin{equation}\label{seq1}
\begin{array}{rcl}
0&\to& \pic\e \widetilde{Y}\!/\le \psi^{\lle *}\lbe(\lle\pic\e Y\lle)\to H^{\le 1}_{\et}\lbe(S,\mu^{\widetilde{Y}\!/\le Y})\overset{\!\partial^{\le 1}}{\to} \br\e Y\overset{\!\psi^{\lle *}}{\to}\br\e \widetilde{Y}\\
&\to& H^{\le 2}_{\et}\lbe\lle(S,\mu^{\widetilde{Y}\!/\le Y})\to H^{\le 3}_{\et}\lbe(\e Y,\G_{\le m}\lbe)\overset{\!\psi^{\lle *}}{\to} H^{\le 3}_{\et}\lbe(\e\widetilde{Y},\G_{\le m}\lbe)\to\dots
\end{array}
\end{equation}
and
\begin{equation}\label{seq2}
\begin{array}{rcl}
\picxs(\lbe S\le)\to\picxst(\lbe S\le)&\overset{\!\partial^{\e 0}}{\to}& H^{\le 1}_{\et}\lbe(S,\mu^{\widetilde{Y}\!/\le Y})\to H^{\le 1}_{\et}\lbe(S,\picxs)\\
&\overset{\!\varphi^{\lle *}}{\to}& H^{\le 1}_{\et}\lbe(S,\picxst)\to H^{\le 2}_{\et}\lbe(S,\mu^{\widetilde{Y}\!/\le Y}),
\end{array}
\end{equation}
where the maps $\partial^{\le 1}$ and $\partial^{\e 0}$ are connecting homomorphisms in \'etale cohomology.

Next, since $\widetilde{f}\colon \widetilde{X}\to S$ is proper and flat with integral geometric fibers, the canonical map $\cO_{\le T}\to (\widetilde{f}_{\lle T}\lbe)_{\lbe *}\cO_{\lbe \widetilde{X}_{T}}$ is an isomorphism of Zariski sheaves on $T$ for every $S$-scheme $T$ \cite[Exercise 9.3.11, pp.~260 and 303]{klei}. Consequently, the Cartan-Leray spectral sequence
$H^{\le r}_{\et}\lbe(\lbe S, R^{\le s}\be\lle \widetilde{f}_{\be *}\G_{m,\le \widetilde{X}}\be)\Rightarrow H^{\le r+s}_{\et}(\lbe \widetilde{X}, \G_{\le m}\lbe)$ induces an exact sequence of abelian groups
\begin{equation}\label{seq3}
0\to\pic S\to \pic \widetilde{X}\to \picxst(\lbe S\le)\overset{\!d_{\lbe 2,\lle \widetilde{X}}^{\e 0,\le 1}}{\to}\br\le S\to\brop\to H^{\le 1}_{\et}\lbe(S,\picxst)\to H^{\le\lle 3}\lbe(\lbe  \widetilde{X}\be/\lbe S\le), 
\end{equation}
where $H^{\le\lle 3}\lbe(\lbe  \widetilde{X}\be/\lbe S\le)$ and $\brop$ are the groups \eqref{pbm} and \eqref{br1}, respectively (cf. \cite[Proposition 5.4.2, p.~138]{ctsk}).

When $S=\spec k$, where $k$ is a field, we have $\pic S=0$ and $\picxkt(\lbe k\le)=(\pic\le\widetilde{X}^{\rm s}\le)^{\lbe\g}$ by Remark \eqref{int}(a), whence the third map in \eqref{seq3} is an injection $\pic \widetilde{X}\hookrightarrow(\pic\le\widetilde{X}^{\rm s}\le)^{\lbe\g}$. Identifying $\pic \widetilde{X}$ with its image in $(\pic\le\widetilde{X}^{\rm s}\le)^{\lbe\g}\!$ under the preceding injection, the map $d_{\lbe 2,\lle \widetilde{X}}^{\e\lle 0,\le 1}$ in \eqref{seq3} for $S=\spec k$ induces an isomorphism of abelian groups
\begin{equation}\label{cbx}
(\pic\le\widetilde{X}^{\rm s}\le)^{\lbe\g}\!/\e\pic\le \widetilde{X}\isoto \bxkp.
\end{equation}

\begin{lemma}\label{comm} The following diagram commutes (up to sign)
\begin{equation}\label{cod}
\xymatrix{\picxst(\lbe S\le)\ar[d]^{\overline{d}_{\lle 2,\lle \widetilde{X}}^{\, 0,\le 1}}\ar[r]^(.45){\partial^{\le\lle 0}}&H^{\le 1}_{\et}\lbe(S,\mu^{\widetilde{Y}\!/\le Y})\ar@{->>}[r]^(.57){\overline{\partial}^{\e 1}}&\br\lbe(\e\widetilde{Y}\!/Y\le)\ar@{=}[d]\\
\bxsp\ar[r]^{\subseteq}&\bysp \ar[r]^{(\le f\circ\e \iota\le)^{*}}&\br\lbe(\e\widetilde{Y}\!/Y\le),	
}
\end{equation}
where $\overline{d}_{\le 2,\lle \widetilde{X}}^{\, 0,\le 1}$ is induced by the first differential in \eqref{seq3}, the bottom row comes from diagram \eqref{zwe}, $\partial^{\e 0}$ is the second map in \eqref{seq2} and $\overline{\partial}^{\e 1}$ is induced by the third map in \eqref{seq1}.
\end{lemma}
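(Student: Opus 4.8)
The plan is to recognize that both composites around the diagram \eqref{cod} compute, up to sign, the same connecting homomorphism of Brion's four-term exact sequence from Theorem \ref{ex}, realized in two different ways, and then to match these two realizations through the geometric construction of that sequence. Throughout, fix $\xi\in\picxst(S)$.

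First I would treat the top route. Splicing the short exact sequences \eqref{dos} and \eqref{uno} at $\mu^{\widetilde{Y}/Y}$ recovers precisely the four-term sequence of Theorem \ref{ex}, namely $0\to R_{Y/S}(\G_{m,Y})\to R_{\widetilde{Y}/S}(\G_{m,\widetilde{Y}})\to\picxs\to\picxst\to 0$. Its Yoneda (iterated coboundary) map $\picxst(S)=H^{0}_{\et}(S,\picxst)\to H^{2}_{\et}(S,R_{Y/S}(\G_{m,Y}))=\br Y$ is, by the standard behaviour of connecting homomorphisms for a spliced $2$-extension, equal to $\partial^{1}\circ\partial^{0}$, where $\partial^{0}$ is the connecting map of \eqref{dos} appearing in \eqref{seq2} and $\partial^{1}$ is the third map of \eqref{seq1}. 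Since $\overline{\partial}^{\,1}$ is simply $\partial^{1}$ corestricted to its image, and by exactness of \eqref{seq1} that image is $\krn[\,\br Y\to\br\widetilde{Y}\,]=\br(\widetilde{Y}/Y)$, the top route is exactly this connecting homomorphism $\delta$ of the Theorem \ref{ex} sequence, landing in the stated target.

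Next I would rewrite the bottom route. By the standard functoriality of the Leray spectral sequence \eqref{seq3}, the differential $d_{2,\widetilde{X}}^{\,0,1}\colon\picxst(S)\to\br S$ is Yoneda composition with the canonical class $c\in\mathrm{Ext}^{2}_{\ses}(\picxst,\G_{m,S})$ attached to the truncation $\tau_{\le 1}R\widetilde{f}_{*}\G_{m,\widetilde{X}}$. The map $(f\circ\iota)^{*}\colon\br S\to\br Y$ is $H^{2}_{\et}(S,-)$ applied to the unit $u\colon\G_{m,S}\to R_{Y/S}(\G_{m,Y})=(f\circ\iota)_{*}\G_{m,Y}$ (using that $f\circ\iota$ is finite, so its higher direct images vanish). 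Hence, by the compatibility of connecting maps with change of coefficients, $(f\circ\iota)^{*}\circ d_{2,\widetilde{X}}^{\,0,1}$ is Yoneda composition with the pushout class $u_{*}c\in\mathrm{Ext}^{2}_{\ses}(\picxst,R_{Y/S}(\G_{m,Y}))$. Moreover $d_{2,\widetilde{X}}^{\,0,1}(\xi)\in\bxsp=\krn[\,\br S\to\br\widetilde{X}\,]$ by \eqref{seq3}, and the structural map $\widetilde{Y}\to S$ factors through $\widetilde{X}$, so $(f\circ\iota)^{*}d_{2,\widetilde{X}}^{\,0,1}(\xi)$ restricts to $0$ in $\br\widetilde{Y}$ and therefore lies in $\br(\widetilde{Y}/Y)$; thus the bottom route also lands in the correct target.

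With these two reformulations in hand, the whole lemma reduces to the identity $[\,\text{Theorem \ref{ex}'s sequence}\,]=\pm\,u_{*}c$ in $\mathrm{Ext}^{2}_{\ses}(\picxst,R_{Y/S}(\G_{m,Y}))$, and this is the step I expect to be the main obstacle. I would establish it from the construction underlying Theorem \ref{ex} in \cite[\S2.1]{bri15}: the cocartesian and cartesian square \eqref{pin} yields, étale-locally, a Milnor/conductor distinguished triangle of sheaves on $X$ relating $\G_{m,X}$, $\varphi_{*}\G_{m,\widetilde{X}}$, $\iota_{*}\G_{m,Y}$ and $\iota_{*}\psi_{*}\G_{m,\widetilde{Y}}$. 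Because $\varphi,\iota,\psi$ are finite, $R\varphi_{*}=\varphi_{*}$ etc.\ and $Rf_{*}\varphi_{*}=R\widetilde{f}_{*}$; applying $Rf_{*}$ turns this triangle into the four-term sequence of Theorem \ref{ex} while identifying $\tau_{\le 1}R\widetilde{f}_{*}\G_{m,\widetilde{X}}$ with the pushforward along $u$ of the corresponding truncation for $X$, and reading off the $2$-extension classes gives the equality. The unavoidable difficulty is the careful setup of this triangle and the control of the acyclicity of $\iota,\psi,\widetilde{\iota}$ under $Rf_{*}$, together with tracking the signs introduced by the two connecting homomorphisms and by the truncation comparison; it is exactly this sign bookkeeping that forces the hedge \emph{up to sign}. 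A more computational alternative, avoiding derived-category language, is to run the comparison at the level of étale Čech cochains for a common trivializing cover: lift $\xi$ through $\varphi^{*}$ to represent $\partial^{0}(\xi)$ by a $\mu^{\widetilde{Y}/Y}$-valued $1$-cocycle, lift further through \eqref{uno} to represent $\overline{\partial}^{\,1}\partial^{0}(\xi)$ by an $R_{Y/S}(\G_{m,Y})$-valued $2$-cocycle, and compare with the $\G_{m,S}$-valued $2$-cocycle representing $d_{2,\widetilde{X}}^{\,0,1}(\xi)$ after restriction along $\iota$; the two cocycles then agree up to a coboundary and a sign.
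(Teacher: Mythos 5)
Your reduction is correct, but it follows a genuinely different route from the paper's proof. The paper stays concrete: since $\widetilde{X}$ is reduced, the sequence of invertible rational functions and Cartier divisors on $\widetilde{X}$ pushes forward to the four-term exact sequence \eqref{res2} in $\ses$, and the citation \cite[diagram (1.A.11), p.~399]{cts87} identifies the Leray differential $d_{2,\widetilde{X}}^{\,0,1}$ of \eqref{seq3}, up to sign, with the iterated connecting homomorphism of that explicit $2$-extension; the lemma is then deduced by comparing this $2$-extension with the splice of \eqref{uno} and \eqref{dos}, invoking functoriality of \'etale cohomology. You instead keep both $2$-extensions abstract: the bottom route of \eqref{cod} is Yoneda multiplication by $u_{*}c$, where $c\in\mathrm{Ext}^{2}_{\ses}(\picxst,\G_{m,S})$ is the truncation class of $\tau_{\leq 1}R\widetilde{f}_{*}\G_{m,\widetilde{X}}$, the top route is the iterated connecting map of Brion's $2$-extension, and the whole lemma becomes the identity $[\textrm{Brion}]=\pm\, u_{*}c$. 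That identity is true, and your Milnor-square mechanism does prove it: applying $Rf_{*}$ to the sequence $0\to\G_{m,X}\to\varphi_{*}\G_{m,\widetilde{X}}\times\iota_{*}\G_{m,Y}\to(\iota\circ\psi)_{*}\G_{m,\widetilde{Y}}\to 0$ of \cite[Lemma 2.2]{bri15}, and using that $Y$ and $\widetilde{Y}$ are finite over $S$, produces a homotopy cartesian square with vertices $Rf_{*}\G_{m,X}$, $R\widetilde{f}_{*}\G_{m,\widetilde{X}}$, $R_{Y/S}(\G_{m,Y})$, $R_{\widetilde{Y}/S}(\G_{m,\widetilde{Y}})$ whose two horizontal cones are both $\mu^{\widetilde{Y}/Y}$; the octahedral axiom applied to the composite $\G_{m,S}\to\tau_{\leq 1}Rf_{*}\G_{m,X}\to\tau_{\leq 1}R\widetilde{f}_{*}\G_{m,\widetilde{X}}$, followed by composition with the map to $R_{Y/S}(\G_{m,Y})$, yields exactly the required identity, with the sign produced by triangle rotations --- which is where ``up to sign'' enters on your route too. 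Comparatively: your argument avoids \cite{cts87} and sheaves of rational functions altogether, and makes the sign conceptual, at the cost of derived-category bookkeeping and of essentially re-proving \cite[Corollary 2.3]{bri15} along the way; the paper's argument is shorter because the hard comparison is outsourced to \cite{cts87}, although its closing ``functoriality'' step also conceals real work (there is no natural map $\widetilde{f}_{*}\msdiv_{\widetilde{X}}\to\picxs$, so one must first pass to divisors avoiding $\widetilde{Y}$).

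One phrase in your write-up is wrong as stated and should be repaired: $\tau_{\leq 1}R\widetilde{f}_{*}\G_{m,\widetilde{X}}$ is \emph{not} ``the pushforward along $u$ of the corresponding truncation for $X$''; its $H^{0}$ is $\G_{m,S}$, not $R_{Y/S}(\G_{m,Y})$, and its $H^{1}$ is $\picxst$, not $\picxs$. The correct statement underlying your comparison is that $R_{\widetilde{Y}/S}(\G_{m,\widetilde{Y}})$ is the homotopy pushout of $R\widetilde{f}_{*}\G_{m,\widetilde{X}}\leftarrow Rf_{*}\G_{m,X}\to R_{Y/S}(\G_{m,Y})$, equivalently that the two horizontal cones of the square above agree; it is this, fed into the octahedron, that forces $[\textrm{Brion}]=\pm\, u_{*}c$. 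With that correction, your outline is sound.
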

\begin{proof} Since $\widetilde{X}$ is reduced, there exists a canonical exact sequence in $\widetilde{X}_{\et}^{\sim}$
\begin{equation}\label{res}
0\to\G_{\lbe m,\le \widetilde{X}}\to {\ms R}_{\! \widetilde{X}}^{\e *}\to\msdiv_{\lbe \widetilde{X}}\to 0,
\end{equation}
where ${\ms R}_{\!\widetilde{X}}^{\e *}$ (respectively, $\msdiv_{\lbe \widetilde{X}}$) is the \'etale sheaf of invertible rational functions (respectively, Cartier divisors) on $\widetilde{X}$ \cite[p.~71]{gb}. Since $\widetilde{f}_{\be *}\G_{ m,\le \widetilde{X}}=\G_{\lbe m,\le S}$ and $R^{1}\be \widetilde{f}_{\be *}\e{\ms
R}_{\! \widetilde{X}}^{\e *}=0$ by \cite[Lemma 1.6]{gb} and \cite[Proposition III.1.13, p.~88]{miet}, the sequence \eqref{res} induces an exact sequence in $\ses$
\begin{equation}\label{res2}
0\to\G_{m,\le S}\to \widetilde{f}_{\be *}\le{\ms R}_{\! \widetilde{X}}^{\e *}\overset{\!\widetilde{q}}\to \widetilde{f}_{\be *}\msdiv_{\lbe \widetilde{X}}\to \picxst\to 0,
\end{equation}
which gives rise to two short exact sequences
\begin{equation}\label{res3}
0\to\G_{m,\le S}\to \widetilde{f}_{\be *}\le{\ms R}_{\! \widetilde{X}}^{\e *}\to\img \e\widetilde{q}\to 0
\end{equation}
and
\begin{equation}\label{res4}
0\to\img\e \widetilde{q}\to \widetilde{f}_{\be *}\msdiv_{\lbe \widetilde{X}}\to \picxst\to 0.
\end{equation}
The sequences \eqref{res2}, \eqref{res3} and \eqref{res4} are of the form \cite[(1.A.8), (1.A.9) and (1.A.10) (respectively), pp.~398\le-\le 399]{cts87}. Thus, by the commutativity (up to sign) of \cite[diagram (1.A.11), p.~399]{cts87}, the differential 
$d_{\lbe 2,\lle \widetilde{X}}^{\, 0,\le 1}\colon\picxst(\lbe S\e)\to\br\le S$ in the sequence \eqref{seq3} is (up to sign) the composition
\[
\picxst(\lbe S\e)\overset{\delta^{\lle 0}}{\to} H^{\le 1}_{\et}\lbe(S,\img \e\widetilde{q}\le\,)\overset{\delta^{\lle 1}}{\to} \br\le S,
\]
where the first (respectively, second) map is the connecting homomorphism induced by the sequence \eqref{res4} (respectively, \eqref{res3}). The lemma then follows from the commutativity of the diagram
\[
\xymatrix{\picxst(\lbe S\le)\ar[d]^{\delta^{\lle 0}}\ar[r]^(.45){\partial^{\le\lle 0}}&H^{\le 1}_{\et}\lbe(S,\mu^{\widetilde{Y}\!/\le Y})\ar[r]^(.63){\partial^{\lle 1}}&\br\le Y\ar@{=}[d]\\
H^{\le 1}_{\et}\lbe(S,\img \e\widetilde{q}\,)\ar[r]^(.6){\delta^{\lle 1}}&\br\le S \ar[r]^{(\le f\circ\e \iota\le)^{*}}&\br\le Y	
}
\]
which, in turn, follows from the functoriality of \'etale cohomology.	
\end{proof}

The sequence \eqref{seq3} induces the following exact sequences of abelian groups:
\begin{equation}\label{seq4}
0\to \pic \widetilde{X}/\e\pic S\to \picxst(\lbe S\le)\overset{\!\overline{d}_{\lle 2,\lle \widetilde{X}}^{\, 0,\le 1}}{\to}\bxsp\to 0,
\end{equation}
\begin{equation}\label{seq5}
0\to\br S/\e\bxsp\to\brop\to\brap\to 0
\end{equation}
and 
\begin{equation}\label{seq6}
0\to \brap\to H^{\le 1}_{\et}\lbe(S,\picxst)\to H^{\le\lle 3}\lbe(\lbe X\be/\be S\e),
\end{equation}
where $\overline{d}_{\lle 2,\lle \widetilde{X}}^{\, 0,\le 1}$ is the left-hand vertical map in diagram \eqref{cod} and $\brap$ is the group \eqref{bra}. The sequences \eqref{seq4}-\eqref{seq6} and their analogs for $f\colon X\to S$ yield the following exact and commutative diagrams of abelian groups:
\begin{equation}\label{ali2}
\xymatrix{0\ar[r]&\pic X/\le\pic S\ar@{->>}[d]\ar[r]&\picxs(\lbe S\le)\ar[d]\ar[r]&\bxs\ar@{^{(}->}[d]^{\subseteq}\ar[r]&0\\
0\ar[r]&\pic\widetilde{X}/\le\pic S\ar[r]&\picxst(\lbe S\le)\ar[r]^{\overline{d}_{\lle 2,\lle \widetilde{X}}^{\, 0,\le 1}}&\bxsp\ar[r]&0,
}
\end{equation}
\begin{equation}\label{ali}
\xymatrix{0\ar[r]&\br\le S/\e\bxs\ar@{->>}[d]\ar[r]&\bro\ar[d]^{\varphi_{1}^{*}}\ar[r]&\bra\ar[d]^{\varphi_{\rm a}^{*}}\ar[r]&0\\
0\ar[r]&\br\le S/\e\bxsp\ar[r]&\brop\ar[r]&\brap\ar[r]&0,
}
\end{equation}
and
\begin{equation}\label{dia}
\xymatrix{0\ar[r]&\bra\ar[d]^{\varphi_{\lbe\rm a}^{*}}\ar[r]&H^{\le 1}_{\et}\lbe(S,\picxs)\ar[d]^{\varphi^{\lle *}}\ar[r]&H^{\le 3}_{\et}\lbe(\le S,\G_{\le m})\ar@{=}[d]\\
0\ar[r]&\brap\ar[r]&H^{\le 1}_{\et}\lbe(S,\picxst)\ar[r]&H^{\le 3}_{\et}\lbe(\le S,\G_{\le m}).
}
\end{equation}
The first vertical map in diagram \eqref{ali2} is surjective by \cite[${\rm IV}_{\be 4}$, Proposition 21.8.5(ii)]{ega}, whose hypotheses are satisfied since
$X\setminus Y$ is schematically dense in $X$ by \cite[Proposition 5.4.3, p.~284]{ega1}. Consequently, the map $\overline{d}_{\lle 2,\lle \widetilde{X}}^{\, 0,\le 1}$ in diagram \eqref{ali2} induces an isomorphism of abelian groups
\begin{equation}\label{ppr}
\check{d}_{\lle 2,\lle \widetilde{X}}^{\,\e 0,\le 1}\colon \cok\be[\e\picxs(\lbe S\le)\to\picxst(\lbe S\le)]\isoto\bxsp/\e\bxs.
\end{equation}
Let
\begin{equation}\label{comp}
\alpha\colon \bxsp/\e\bxs\into H^{\le 1}_{\et}\lbe(S,\mu^{\widetilde{Y}\!/\le Y})
\end{equation}
be the composition
\[
\bxsp/\e\bxs\underset{\!\!\sim}{\e\overset{\!\big(\!\check{d}_{ 2,\lle \widetilde{X}}^{\,\le 0,\le 1}\be\big)^{\! -1}}{\lra}}\cok\be[\e\picxs(\lbe S\le)\to\picxst(\lbe S\le)]\overset{\e\partial^{\e 0}}{\into}H^{\le 1}_{\et}\lbe(S,\mu^{\widetilde{Y}\!/\le Y}),
\]
where $\check{d}_{\lle 2,\lle \widetilde{X}}^{\, 0,\le 1}$ is the isomorphism \eqref{ppr} and $\partial^{\e 0}$ is the second map in \eqref{seq2}. 

\begin{theorem}\label{choo} There exists a canonical exact sequence of abelian groups
\[
0\to\cok\!\!\be\left[\frac{\bxsp}{\bxs}\overset{\alpha}{\hookrightarrow} H^{\le 1}_{\et}\lbe(S,\mu^{\widetilde{Y}\!/\le Y})\right]\to\bra
\overset{\!\!\!\varphi_{\lbe\rm a}^{*}}{\to}\brap\to H^{\le 2}_{\et}\lbe(S,\mu^{\widetilde{Y}\!/\le Y}),
\]
where $\alpha$ is the map \eqref{comp}.
\end{theorem}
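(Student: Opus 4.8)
The plan is to read the asserted sequence off from the long exact cohomology sequence \eqref{seq2} attached to \eqref{dos} together with the commutative diagram \eqref{dia}. Throughout I regard $\bra$ and $\brap$ as the kernels $\krn a$ and $\krn b$ of the horizontal maps $a\colon H^{1}_{\et}(S,\picxs)\to H^{3}_{\et}(S,\G_{m})$ and $b\colon H^{1}_{\et}(S,\picxst)\to H^{3}_{\et}(S,\G_{m})$ displayed in \eqref{dia}, so that $\varphi_{\lbe\rm a}^{*}$ is just the restriction of the map $\varphi^{*}\colon H^{1}_{\et}(S,\picxs)\to H^{1}_{\et}(S,\picxst)$ occurring in \eqref{seq2}, and the final arrow $\brap\to H^{2}_{\et}(S,\mu^{\widetilde{Y}\!/\le Y})$ is the restriction to $\brap$ of the connecting map $\partial\colon H^{1}_{\et}(S,\picxst)\to H^{2}_{\et}(S,\mu^{\widetilde{Y}\!/\le Y})$ in \eqref{seq2}. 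The whole argument is then a diagram chase whose one structural input, used repeatedly, is the commutativity $a=b\circ\varphi^{*}$ coming from \eqref{dia} (the right-hand vertical map there being the identity).

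First I would verify exactness at $\brap$. If $x\in\bra=\krn a$ then $b(\varphi^{*}(x))=a(x)=0$, so $\varphi^{*}(x)\in\krn b=\brap$; hence $\img\varphi_{\lbe\rm a}^{*}=\varphi^{*}(\bra)\subseteq\brap\cap\img\varphi^{*}$. Conversely, any $y=\varphi^{*}(x)\in\brap\cap\img\varphi^{*}$ satisfies $a(x)=b(\varphi^{*}(x))=b(y)=0$, so $x\in\bra$ and $y\in\img\varphi_{\lbe\rm a}^{*}$. Thus $\img\varphi_{\lbe\rm a}^{*}=\brap\cap\img\varphi^{*}$, and since $\img\varphi^{*}=\krn\partial$ by exactness of \eqref{seq2}, this equals $\krn[\partial|_{\brap}]$, as required.

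Next I would treat exactness at $\bra$, which is where $\alpha$ enters. Write $j\colon H^{1}_{\et}(S,\mu^{\widetilde{Y}\!/\le Y})\to H^{1}_{\et}(S,\picxs)$ for the map in \eqref{seq2} induced by the inclusion in \eqref{dos}. The key point is that $\krn\varphi^{*}\subseteq\bra$: indeed $a=b\circ\varphi^{*}$, so $\varphi^{*}(x)=0$ forces $a(x)=0$. Consequently $\krn\varphi_{\lbe\rm a}^{*}=\bra\cap\krn\varphi^{*}=\krn\varphi^{*}=\img j$, the last equality by exactness of \eqref{seq2}. It remains to identify $\img j$ with $\cok\alpha$. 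By exactness of \eqref{seq2} one has $\krn j=\img[\partial^{\e 0}\colon\picxst(S)\to H^{1}_{\et}(S,\mu^{\widetilde{Y}\!/\le Y})]$, and by its construction \eqref{comp} the map $\alpha$ is $\partial^{\e 0}$ precomposed with the isomorphism \eqref{ppr}; hence $\img\alpha=\img\partial^{\e 0}=\krn j$, and therefore $\cok\alpha=H^{1}_{\et}(S,\mu^{\widetilde{Y}\!/\le Y})/\krn j\isoto\img j=\krn\varphi_{\lbe\rm a}^{*}$. This simultaneously produces the injection $\cok\alpha\into\bra$ opening the sequence and identifies its image with $\krn\varphi_{\lbe\rm a}^{*}$, yielding exactness at both $\cok\alpha$ and $\bra$.

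I do not expect a genuine obstacle: once \eqref{seq2} and the commutative diagram \eqref{dia} are in hand, the statement is essentially the low-degree comparison read off from them, and the crucial inclusion $\krn\varphi^{*}\subseteq\bra$ is immediate from commutativity rather than requiring a separate vanishing argument. The only place demanding care is the bookkeeping in the last step, namely the equality $\img\alpha=\krn j$; this rests on exactness of \eqref{seq2} at $\picxst(S)$ (so that $\partial^{\e 0}$ descends to the injection used in \eqref{comp}) and on \eqref{ppr} being an isomorphism, so that factoring $\alpha$ through it does not change images. Everything else is formal.
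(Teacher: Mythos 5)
Your proposal is correct and takes essentially the same route as the paper's own proof: both combine the long exact sequence \eqref{seq2} with the commutative diagram \eqref{dia}, identifying $\krn \varphi_{\lbe\rm a}^{*}$ with $\krn\varphi^{\lle *}\cong\cok\alpha$ and obtaining exactness at $\brap$ from the injection $\cok \varphi_{\lbe\rm a}^{*}\hookrightarrow\cok\varphi^{\lle *}\hookrightarrow H^{\le 2}_{\et}\lbe(S,\mu^{\widetilde{Y}\!/\le Y})$. The only difference is one of presentation: you spell out the diagram chases (the inclusion $\krn\varphi^{\lle *}\subseteq\bra$ via $a=b\circ\varphi^{\lle *}$, the equality $\img\varphi_{\lbe\rm a}^{*}=\brap\cap\img\varphi^{\lle *}$, and the identification $\img\alpha=\krn j$) that the paper compresses into the statements ``diagram \eqref{dia} yields a canonical isomorphism $\krn\varphi_{\lbe\rm a}^{*}\isoto\krn \varphi^{\lle *}$ and an injection $\cok \varphi_{\lbe\rm a}^{*}\hookrightarrow\cok\varphi^{\lle *}$'' and ``the theorem is now clear.''
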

\begin{proof} The sequence \eqref{seq2} induces an exact sequence of abelian groups
\[
\frac{\bxsp}{\bxs}\overset{\alpha}{\hookrightarrow} H^{\le 1}_{\et}\lbe(S,\mu^{\widetilde{Y}\!/\le Y})\to  H^{\le 1}_{\et}\lbe(S,\picxs)\overset{\!\varphi^{\lle *}}{\to} H^{\le 1}_{\et}\lbe(S,\picxst)\to H^{\le 2}_{\et}\lbe(S,\mu^{\widetilde{Y}\!/\le Y}),
\]
where $\alpha$ is the composition \eqref{comp}. On the other hand, diagram \eqref{dia} yields a canonical isomorphism $\krn\varphi_{\lbe\rm a}^{*}\isoto\krn \varphi^{\lle *}$ and an injection $\cok \varphi_{\lbe\rm a}^{*}\hookrightarrow\cok\varphi^{\lle *}$. Thus there exist a canonical isomorphism
\[
\krn\varphi_{\lbe\rm a}^{*}\simeq\cok\!\!\be\left[\frac{\bxsp}{\bxs}\overset{\alpha}{\hookrightarrow} H^{\le 1}_{\et}\lbe(S,\mu^{\widetilde{Y}\!/\le Y})\right]
\]
and a canonical injection $\cok \varphi_{\lbe\rm a}^{*}\hookrightarrow H^{\le 2}_{\et}\lbe(S,\mu^{\widetilde{Y}\!/\le Y})$. The theorem is now clear.
\end{proof}

\begin{corollary}\label{fund} There exists a canonical exact sequence of abelian groups
\[
0\to \krn \varphi_{\lbe 1}^{*}\to \bro\!\overset{\!\!\varphi_{1}^{*}}{\lra}\!\brop\!\to\! \cok \varphi_{\lbe\rm a}^{*}\to 0,
\]
where $\krn \varphi_{1}^{*}$ is an extension
\[
0\to\frac{\bxsp}{\bxs}\to \krn \varphi_{1}^{*}\to \cok\!\!\be\left[\frac{\bxsp}{\bxs}\overset{\alpha}{\hookrightarrow} H^{\le 1}_{\et}\lbe(S,\mu^{\widetilde{Y}\!/\le Y})\right]\to 0.
\]
\end{corollary}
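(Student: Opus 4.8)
The plan is to derive both assertions from a single application of the snake lemma to the commutative diagram \eqref{ali}, whose rows are the exact sequences \eqref{seq5} for $X$ and for $\widetilde{X}$. First I would record the three vertical maps: the leftmost is the canonical projection $\br S/\bxs\to\br S/\bxsp$, the middle is $\varphi_{1}^{*}$, and the rightmost is $\varphi_{\rm a}^{*}$. Because $\bxs\subseteq\bxsp\subseteq\br S$, the leftmost map is surjective with kernel $\bxsp/\bxs$ and zero cokernel.

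Applying the snake lemma then yields the canonical six-term exact sequence
\[
0\to\bxsp/\bxs\to\krn\varphi_{1}^{*}\to\krn\varphi_{\rm a}^{*}\to 0\to\cok\varphi_{1}^{*}\to\cok\varphi_{\rm a}^{*}\to 0,
\]
in which the central zero is the cokernel of the leftmost vertical map. The right-hand portion gives a canonical isomorphism $\cok\varphi_{1}^{*}\isoto\cok\varphi_{\rm a}^{*}$. Feeding this isomorphism into the tautological exact sequence $0\to\krn\varphi_{1}^{*}\to\bro\overset{\varphi_{1}^{*}}{\to}\brop\to\cok\varphi_{1}^{*}\to 0$ produces the first displayed sequence of the corollary.

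The left-hand portion of the snake sequence is the short exact sequence $0\to\bxsp/\bxs\to\krn\varphi_{1}^{*}\to\krn\varphi_{\rm a}^{*}\to 0$. To bring this into the form stated, I would substitute the canonical isomorphism $\krn\varphi_{\rm a}^{*}\simeq\cok[\alpha]$ established inside the proof of Theorem \ref{choo}, with $\alpha$ the map \eqref{comp}; this rewrites the extension as the second display of the corollary.

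I expect no genuine obstacle: the argument is a diagram chase, and the canonicity of every map follows from the naturality of the snake lemma together with that of the identifications made in Theorem \ref{choo}. The only point deserving a moment's attention is that the snake connecting map $\krn\varphi_{\rm a}^{*}\to\cok[\br S/\bxs\to\br S/\bxsp]$ automatically vanishes because its target is zero, so that the sequence splits into the two stated pieces exactly as claimed.
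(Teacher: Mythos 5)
Your proposal is correct and follows essentially the same route as the paper: the paper's proof also applies the snake lemma to diagram \eqref{ali}, obtaining the short exact sequence $0\to\bxsp\!/\e\bxs\to\krn\varphi_{1}^{*}\to\krn\varphi_{\rm a}^{*}\to 0$ together with the isomorphism $\cok\varphi_{1}^{*}=\cok\varphi_{\rm a}^{*}$, and then invokes the identification $\krn\varphi_{\rm a}^{*}\simeq\cok[\alpha]$ from Theorem \ref{choo}. Your write-up merely makes explicit the steps the paper labels ``now immediate.''
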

\begin{proof} An application of the snake lemma to diagram \eqref{ali}
yields an exact sequence of abelian groups
\[
0\to\frac{\bxsp}{\bxs}\to \krn \varphi_{1}^{*}\to \krn \varphi_{\lbe\rm a}^{*}\to 0
\]
and an isomorphism $\cok \varphi_{1}^{*}=\cok \varphi_{\lbe\rm a}^{*}$. The corollary is now immediate.
\end{proof}

The above arguments also yield the following generalization of \cite[Theorem 1]{bk} (see Remark \ref{bkr} below).

\begin{theorem}\label{gbk} If $\pic\e \widetilde{Y}=\psi^{\lle *}\be(\lle\pic\e Y\lle)$, then
\[
\bxs=\bxsp\cap\bys,
\]
where the intersection takes place in $\br\le S$.
\end{theorem}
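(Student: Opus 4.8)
The plan is to prove the two inclusions separately, the nontrivial one being $\bxsp\cap\bys\subseteq\bxs$. The inclusion $\bxs\subseteq\bxsp\cap\bys$ is immediate from the left-hand square of diagram \eqref{zwe}, in which $\bxs$ sits inside both $\bxsp$ and $\bys$ as subgroups of $\br S$. For the reverse inclusion I would fix $b\in\bxsp\cap\bys$ and show that its class $\overline b$ in $\bxsp/\bxs$ vanishes, which is equivalent to $b\in\bxs$.

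First I would apply the injection $\alpha\colon\bxsp/\bxs\into H^1_{\et}(S,\mu^{\widetilde{Y}/Y})$ of \eqref{comp} and follow $\alpha(\overline b)$ by the surjection $\overline\partial^{\e 1}\colon H^1_{\et}(S,\mu^{\widetilde{Y}/Y})\onto\br(\widetilde{Y}/Y)$ of diagram \eqref{cod}. Because $\alpha$ is, by construction, $\partial^{\e 0}$ transported across the isomorphism \eqref{ppr}, the commutativity up to sign of \eqref{cod} (Lemma \ref{comm}) shows that $\overline\partial^{\e 1}\circ\alpha$ agrees, up to sign, with the map $\bxsp/\bxs\to\br(\widetilde{Y}/Y)$ induced by the restriction of $(f\circ\iota)^{*}$ to $\bxsp\subseteq\bysp$ (this is well defined precisely because $\bxs\subseteq\bys=\krn[(f\circ\iota)^{*}\colon\bysp\to\br(\widetilde{Y}/Y)]$ in \eqref{zwe}). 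Since $b\in\bys$, its image under $(f\circ\iota)^{*}$ is $0$, and hence $\overline\partial^{\e 1}(\alpha(\overline b))=0$.

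It remains to upgrade this to $\alpha(\overline b)=0$, and here is where the hypothesis enters. The assumption $\pic\widetilde{Y}=\psi^{*}(\pic Y)$ makes the leading term of the exact sequence \eqref{seq1} vanish, so that the connecting map $\partial^{\e 1}\colon H^1_{\et}(S,\mu^{\widetilde{Y}/Y})\to\br Y$ is injective. As $\overline\partial^{\e 1}$ is the factorization of $\partial^{\e 1}$ through the inclusion $\br(\widetilde{Y}/Y)\into\br Y$, it is injective as well; combined with the previous step this forces $\alpha(\overline b)=0$, and the injectivity of $\alpha$ then gives $\overline b=0$, i.e.\ $b\in\bxs$. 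This establishes $\bxsp\cap\bys\subseteq\bxs$ and completes the proof.

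I expect the only delicate point to be the alignment of the three inputs---the definition of $\alpha$ via \eqref{ppr} and \eqref{seq2}, the factorization $\partial^{\e 1}=(\textrm{incl})\circ\overline\partial^{\e 1}$ coming from exactness of \eqref{seq1}, and diagram \eqref{cod}---so that $\overline\partial^{\e 1}\circ\alpha$ is correctly identified with the map induced by $(f\circ\iota)^{*}$. Once these are matched up the argument is a short diagram chase, and no input beyond Brion's sequence \eqref{dos} and Lemma \ref{comm} is required.
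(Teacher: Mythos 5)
Your proposal is correct and follows essentially the same route as the paper: the easy inclusion from diagram \eqref{zwe}, then the reverse inclusion via the injection $\alpha$ of \eqref{comp}, the sign-commutativity of Lemma \ref{comm}, and the injectivity of $\overline{\partial}^{\e 1}$ forced by the hypothesis $\pic\e\widetilde{Y}=\psi^{\lle *}(\pic\e Y)$ through the exactness of \eqref{seq1}. The only cosmetic difference is that the paper packages your element chase as the statement that the induced map $\beta\colon\bxsp/\e\bxs\to\bysp/\e\bys$ in the quotient diagram \eqref{kbg} is injective, which is exactly what your argument verifies.
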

\begin{proof} Diagram \eqref{zwe} shows that $\bxs\subseteq\bxsp\cap\bys$. Now, by Lemma \ref{comm}\,, the following diagram commutes (up to sign)
\begin{equation}\label{kbg}
\xymatrix{\bxsp/\e\bxs\,\ar[d]^(.45){\beta}\ar@{^{(}->}[r]^(.55){\alpha}&H^{\le 1}_{\et}\lbe(S,\mu^{\widetilde{Y}\!/\le Y})\ar[d]^(.45){\overline{\partial}^{\le 1}}_{\simeq}\\
\bysp/\e\bys\,\ar@{^{(}->}[r]^(.6){\overline{(\le f\lle\circ\le i\e)^{*}}}&\br\lbe(\e\widetilde{Y}\!/Y\le),
}
\end{equation}
where $\beta$ is induced by the inclusion $\bxsp\subseteq\bysp$, $\alpha$ is the map \eqref{comp}, $\overline{\partial}^{\e 1}$ is induced by the map $\partial^{\e 1}$ in the sequence \eqref{seq1} and $\overline{(\le f\lle\circ\le i\e)^{*}}$ is induced by the map $(\le f\lle\circ\le i\e)^{*}\colon \bysp\to \br\lbe(\e\widetilde{Y}\!/Y\le)$ in diagram \eqref{zwe}. The map $\overline{\partial}^{\e 1}$ above is an isomorphism by the exactness of \eqref{seq1} since $\pic\e \widetilde{Y}/\psi^{\lle *}\lbe(\lle\pic\e Y\lle)=0$. It follows that the map $\beta$ in \eqref{kbg} is injective, i.e., $\bxsp\cap\bys\subseteq\bxs$.
\end{proof}

\section{Pinched varieties}\label{abg}

Henceforth, we assume that $S=\spec k$, where $k$ is a field. We will write $p$ for the  characteristic exponent of $k$, i.e., $p=1$ if $\car\e k=0$ and $p=\car\e k$ otherwise. In the pinching diagram \eqref{pin}, $X$ and $\widetilde{X}$ are proper and geometrically integral FA $k$-schemes and $\psi\colon \widetilde{Y} \to Y$ is a morphism between finite $k$-schemes such that the induced morphism of Zariski sheaves $\cO_{\le Y}\to\psi_{*}\cO_{\le\widetilde{Y} }$ is injective. Then $Y=\spec A$ and $\widetilde{Y}=\spec \widetilde{A}\e$, where $A=\varGamma(\e Y,\cO_{\le Y}\lbe)\into\widetilde{A}=\varGamma(\e\widetilde{Y},\cO_{\le\widetilde{Y}}\lbe)$ are artinian $k\e$-\e algebras \cite[Proposition 6.5.4, p.~309]{ega1}. We can write $A=\prod_{\e y\e\in\e Y}\cO_{\e Y,\e y}$ and
$\widetilde{A}=\prod_{\e y\e\in\e Y}\prod_{\e\widetilde{y}\e\in\e \widetilde{Y}_{y}}\cO_{\e\widetilde{Y}\be,\,\widetilde{y}}\,$, where $\cO_{\e Y,\e y}$ and $\cO_{\e\widetilde{Y}\be,\,\widetilde{y}}$ are local artinian $k$-algebras \cite[(12.5), p.~331]{gw}. Since a local artinian ring is henselian (see, e.g., \cite[Corollary 9.1, p.~227]{eis} and \cite[04GE, Lemma 10.153.10]{sp}), the canonical map $H^{\le r}_{\et}\lbe(\cO_{\e Y,\e y},\G_{\le m})\to
H^{\le r}_{\et}\lbe(\kappa(y),\G_{\le m})$ is an isomorphism of abelian groups for every $r\geq 1$ and every $y\e\in\e Y$, and similarly for $\cO_{\e\widetilde{Y}\be,\,\widetilde{y}}$ \cite[Theorem 11.7(2), p.~181]{gb}. Thus the horizontal map in the following canonical commutative diagram of abelian groups is an isomorphism:
\begin{equation}\label{brd}
\xymatrix{H^{\le r}_{\le\lle\et}\lbe(\e k,\G_{\le m})\ar[dr]\ar[d]& \\
H^{\le r}_{\le\lle\et}\lbe(\e Y,\G_{\le m})	\ar[r]^(.4){\sim}& \displaystyle\prod_{\e y\e\in\e Y}H^{\le r}_{\le\lle\et}\be(\kappa(y),\G_{\le m}\lbe).
}
\end{equation}
For each $y\e\in\e Y$, the $y\e$-th component of the oblique map above is the pullback homomorphism $H^{\le r}_{\le\et}\lbe(k,\G_{\le m}\lbe)\to H^{\le r}_{\le\et}\lbe(\kappa(y),\G_{\le m})$ induced by the inclusion $k\subseteq \kappa(y)$. 
Setting $r=1$ in \eqref{brd} (and in its analog for $\widetilde{Y}\e$), we obtain $\pic\e Y=\pic\e \widetilde{Y}=0$, whence
\begin{equation}\label{pac}
H^{\le 1}\lbe(k_{\le\et},\mu^{\widetilde{Y}\!/\le Y})\isoto \br(\e\widetilde{Y}\!/\e Y\e)
\end{equation}
by the exactness of the sequence \eqref{seq1}. Further, the vertical and oblique maps in \eqref{brd} have the same kernel. In particular, setting $r=2$ in \eqref{brd}, we obtain the equality
\begin{equation}\label{bk0}
\br(\le Y\!/\e k\le)=\bigcap_{\e y\e\in\e Y}\br\lbe(\kappa(\lle y)\lbe/k\lle),
\end{equation}
where the intersection takes place in $\br\le k$.

\begin{theorem}\label{gbk2} The following equality between subgroups of $\br\le k$ holds:
\[
\bxk=\bxkp \cap \bigcap_{\e y\e\in\e Y}\br\lbe(\kappa(y)\lbe/k\lle).
\]	
\end{theorem}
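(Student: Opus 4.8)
The plan is to deduce Theorem \ref{gbk2} as the special case $S=\spec k$ of the more general Theorem \ref{gbk}. The key observation is that Theorem \ref{gbk} asserts the equality $\bxs=\bxsp\cap\bys$ under the hypothesis $\pic\e\widetilde{Y}=\psi^{\lle *}(\pic\e Y)$, and this hypothesis is automatically satisfied in the present situation. Indeed, setting $r=1$ in the diagram \eqref{brd} (and in its analog for $\widetilde{Y}$) shows that $\pic\e Y=\pic\e\widetilde{Y}=0$, as already recorded in the paragraph preceding the statement. In particular the equality $\pic\e\widetilde{Y}=\psi^{\lle *}(\pic\e Y)$ holds trivially, both groups being zero.

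First I would invoke Theorem \ref{gbk} with $S=\spec k$ to obtain $\bxk=\bxkp\cap\bys$, where the intersection is taken inside $\br\le k$. It then remains to identify the group $\bys=\br(\le Y\!/\e k\le)$ appearing on the right. This is precisely the content of the equality \eqref{bk0}, which was established just before the statement of the theorem by setting $r=2$ in \eqref{brd} and using that the vertical and oblique maps there have the same kernel: one finds $\br(\le Y\!/\e k\le)=\bigcap_{\e y\e\in\e Y}\br(\kappa(y)/k)$. Substituting this identification into the equality furnished by Theorem \ref{gbk} yields exactly the asserted formula
\[
\bxk=\bxkp\cap\bigcap_{\e y\e\in\e Y}\br(\kappa(y)/k).
\]

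Since both ingredients are already available in the excerpt, there is essentially no obstacle here; the work is purely a matter of specialization and substitution. The only point requiring a moment of care is the verification that the hypothesis of Theorem \ref{gbk} is met, but this is immediate from the vanishing $\pic\e Y=\pic\e\widetilde{Y}=0$ obtained from \eqref{brd} with $r=1$. I would therefore present the proof as a short two-step argument: cite Theorem \ref{gbk} to get $\bxk=\bxkp\cap\bys$, then rewrite $\bys=\br(\le Y\!/\e k\le)$ using \eqref{bk0}.
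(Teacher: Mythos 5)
Your proposal is correct and matches the paper's own proof exactly: the paper likewise notes that $\pic\e \widetilde{Y}=\psi^{\lle *}\lbe(\le\pic\e Y\lle)=0$ (from \eqref{brd} with $r=1$) and then deduces the statement from Theorem \ref{gbk} together with the identification \eqref{bk0} of $\br(\le Y\!/\e k\le)$ with $\bigcap_{\e y\e\in\e Y}\br\lbe(\kappa(y)\lbe/k\lle)$. Nothing is missing; your two-step specialization-and-substitution argument is precisely the intended one.
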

\begin{proof} Since $\pic\e \widetilde{Y}=\psi^{\lle *}\lbe(\le\pic\e Y\lle)=0$, the theorem follows at once from \eqref{bk0} and Theorem \ref{gbk}\,.
\end{proof}

\begin{remark}\label{bkr} The above statement generalizes \cite[Theorem 1]{bk}, where the equality of Theorem \ref{gbk2} is obtained under the following assumptions: $\car k=0$, $X$ is a $k$-curve with normalization $\widetilde{X}$ and $Y$ is the non-normal locus of $X$ (note that in \cite[Theorem 1]{bk} the groups $(\pic\le\widetilde{X}^{\rm s}\le)^{\lbe\g}\!/\e\pic\le \widetilde{X}$ and $\bxkp$ have been identified via \eqref{cbx}, and similarly for $X$).
\end{remark}

We now observe that, for every $r\geq 1$, there exists a canonical commutative diagram of abelian groups
\begin{equation}\label{bdir}
\xymatrix{H^{\le r}_{\et}\lbe(\e Y,\G_{\le m})\ar[r]^(.4){\sim}\ar[d]^(.45){\psi^{\lle*}}& \displaystyle\prod_{\e y\e\in\e Y}H^{\le r}_{\et}\lbe(\kappa(y),\G_{\le m})\ar[d]^{\prod_{\e y\lle\in\lle Y}\lbe \rho_{y}^{\lle *}}\\
H^{\le r}_{\et}\lbe(\e\widetilde{Y},\G_{\le m})	\ar[r]^(.33){\sim}& \displaystyle\prod_{\e y\e\in\e Y}\prod_{\e\widetilde{y}\e\in\e \widetilde{Y}_{y}}H^{\le r}_{\et}\lbe(\kappa(\le\widetilde{y}\,),\G_{\le m})
}
\end{equation}
where, for each $y\e\in\e Y$,
$\rho_{y}^{*}\colon H^{\le r}_{\et}\lbe(\kappa(y),\G_{\le m})\to\prod_{\e\widetilde{y}\e\in\e \widetilde{Y}_{y}}\lbe H^{\le r}_{\et}\lbe(\kappa(\le\widetilde{y}\,),\G_{\le m})$
is the map whose $\widetilde{y}\le$-\le component, for each $\widetilde{y}\e\in\e \widetilde{Y}_{y}$, is the pullback homomorphism $H^{\e r}_{\et}\be(\kappa(y),\G_{\le m}\lbe)\to H^{\le r}_{\et}\lbe(\kappa(\le\widetilde{y}\,),\G_{\le m})$ induced by the inclusion $\kappa(y)\subseteq \kappa(\le\widetilde{y}\,)$. Clearly,
\eqref{bdir} induces isomorphisms of abelian groups
\begin{equation}\label{relr}
H^{\le r}\be(\e \widetilde{Y}\!/Y\le)\isoto\prod_{y\e\in\e Y}\bigcap_{\e\widetilde{y}\e\in\e \widetilde{Y}_{y}}\!H^{\e r}\be(\kappa(\le\widetilde{y}\,)/\lbe \kappa(y))
\end{equation}
(where, for each $y\in Y$, the intersection takes place in $H^{\e r}_{\et}\be(\kappa(y),\G_{\le m})$) and
\begin{equation}\label{corelr}
\frac{H^{\le r}_{\et}\lbe(\e\widetilde{Y},\G_{\le m})}{\psi^{\le*}\lbe(H^{\le r}_{\et}\lbe(\e Y,\G_{\le m}))}\isoto\prod_{y\e\in\e Y}\cok\!\!\lbe\left[\e H^{\e r}_{\et}\lbe(\kappa(y),\G_{\le m})\to \textstyle{\prod_{\,\widetilde{y}\e\in\e \widetilde{Y}_{y}}}\e H^{\e r}_{\et}\lbe(\kappa(\le\widetilde{y}\,),\G_{\le m})\right]\!.
\end{equation}
Setting $r=2$ in \eqref{relr} and \eqref{corelr}, we obtain canonical isomorphisms
\begin{equation}\label{rel}
\hskip -1.2cm\br(\e\widetilde{Y}\!/\e Y\le)\isoto\prod_{y\e\in\e Y}\bigcap_{\e\widetilde{y}\e\in\e \widetilde{Y}_{y}}\!\!\br\lbe(\kappa(\widetilde{y}\,)/\kappa(y))
\end{equation}
and
\begin{equation}\label{corel}
\hskip 1.25cm\frac{\br\e\widetilde{Y}}{\psi^{\le\lle*}\lbe(\br\e Y\lle)}\isoto\prod_{y\e\in\e Y}\!\cok\!\!\lbe\left[\e\br\e \kappa(y)\to\textstyle{\prod_{\e\widetilde{y}\e\in\e \widetilde{Y}_{y}}}\,\br\e \kappa(\le\widetilde{y}\,)\right]\!.
\end{equation}

\begin{theorem}\label{fund2} The pinching diagram \eqref{pin} induces an exact sequence of abelian groups
\[
\hskip -.5cm 0\!\to\!\cok\!\!\!\left[\le \frac{\bxkp}{\bxk}\into \prod_{y\e\in\e Y}\!\lbe\bigcap_{\e\widetilde{y}\e\in\e \widetilde{Y}_{y}}\!\!\br\lbe(\kappa(\widetilde{y}\,)/\kappa(y))\lbe\right]\!\!\to\!\bra
\overset{\!\!\!\varphi_{\lbe\rm a}^{*}}{\to}\!\brap\!\to H^{\le 2}_{\et}\lbe(k,\mu^{\widetilde{Y}\!/\le Y}\lbe),
\]
where $H^{\le 2}_{\et}\lbe(k,\mu^{\widetilde{Y}\!/\le Y})$ is an extension
\[
\hskip 0.1cm 0\to\!\prod_{\e y\e\in\e Y}\!\cok\!\!\!\left[\be\br\le \kappa(y)\!\to\!\!\prod_{\e\widetilde{y}\e\in\e \widetilde{Y}_{y}}\!\!\br\e \kappa(\le\widetilde{y}\,)\right]\!\to\! H^{\le 2}_{\et}\lbe(k,\mu^{\widetilde{Y}\!/\le Y})\!\to\!\prod_{y\e\in\e Y}\be\bigcap_{\e\widetilde{y}\e\in\e \widetilde{Y}_{y}}\!\!H^{\le 3}\lbe(\kappa(\le\widetilde{y}\,)/\lbe \kappa(y))\to 0.
\]
\end{theorem}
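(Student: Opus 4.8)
The plan is to specialize Theorem \ref{choo} to $S=\spec k$ and then rewrite its first and last terms in the field-theoretic language supplied by \eqref{rel}, \eqref{corel} and \eqref{relr}. Theorem \ref{choo} already furnishes the exact sequence
\[
0\to\cok\!\left[\frac{\bxkp}{\bxk}\overset{\alpha}{\hookrightarrow}H^{\le 1}_{\et}(k,\mu^{\widetilde{Y}/Y})\right]\to\bra\overset{\varphi_{\rm a}^{*}}{\to}\brap\to H^{\le 2}_{\et}(k,\mu^{\widetilde{Y}/Y}),
\]
so the only task is to identify the two outer terms. Since $Y$ and $\widetilde{Y}$ are finite over the field $k$, the case $r=1$ of \eqref{brd} gives $\pic Y=\pic\widetilde{Y}=0$; hence \eqref{pac} identifies $H^{\le 1}_{\et}(k,\mu^{\widetilde{Y}/Y})$ with $\br(\widetilde{Y}/Y)$, and \eqref{rel} identifies the latter with $\prod_{y\in Y}\bigcap_{\widetilde{y}\in\widetilde{Y}_{y}}\br(\kappa(\widetilde{y}\,)/\kappa(y))$.

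The next step, which I regard as the main point, is to recognize $\alpha$ under these identifications as the natural injection of the statement. The map $\alpha$ of \eqref{comp} is defined only abstractly, through the inverse of \eqref{ppr} and the connecting homomorphism $\partial^{\,0}$, so the identification is not formal; it is precisely what diagram \eqref{kbg} in the proof of Theorem \ref{gbk} provides. Indeed, when $S=\spec k$ the right-hand vertical map $\overline{\partial}^{\,1}$ of \eqref{kbg} is the isomorphism \eqref{pac}, so commutativity of \eqref{kbg} shows that $\overline{\partial}^{\,1}\circ\alpha$ is the composite $\bxkp/\bxk\to\bysp/\bys\hookrightarrow\br(\widetilde{Y}/Y)$ induced by the inclusion $\bxkp\subseteq\bysp$ of diagram \eqref{zwe}. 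Composing once more with \eqref{rel} exhibits $\alpha$ as the natural map $\bxkp/\bxk\hookrightarrow\prod_{y}\bigcap_{\widetilde{y}}\br(\kappa(\widetilde{y}\,)/\kappa(y))$; in particular this map is injective, which is Theorem \ref{thm a}(i), and $\cok[\alpha]$ is carried isomorphically onto the cokernel displayed in the statement. Feeding this into the sequence of Theorem \ref{choo} produces the asserted four-term exact sequence.

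It remains to describe $H^{\le 2}_{\et}(k,\mu^{\widetilde{Y}/Y})$ as an extension. For this I would return to the long exact sequence \eqref{seq1} with $S=\spec k$ and extract, from exactness at $\br\widetilde{Y}$ and at $H^{\le 3}_{\et}(Y,\G_{m})$, the short exact sequence
\[
0\to\cok\!\left[\br Y\overset{\psi^{*}}{\to}\br\widetilde{Y}\right]\to H^{\le 2}_{\et}(k,\mu^{\widetilde{Y}/Y})\to\krn\!\left[H^{\le 3}_{\et}(Y,\G_{m})\overset{\psi^{*}}{\to}H^{\le 3}_{\et}(\widetilde{Y},\G_{m})\right]\to 0.
\]
The subgroup $\cok[\br Y\overset{\psi^{*}}{\to}\br\widetilde{Y}]=\br\widetilde{Y}/\psi^{*}(\br Y)$ is identified with $\prod_{y}\cok[\br\kappa(y)\to\prod_{\widetilde{y}}\br\kappa(\widetilde{y}\,)]$ by \eqref{corel}, while the quotient is by definition $H^{\le 3}(\widetilde{Y}/Y)$, which \eqref{relr} at $r=3$ identifies with $\prod_{y}\bigcap_{\widetilde{y}}H^{\le 3}(\kappa(\widetilde{y}\,)/\kappa(y))$. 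These two identifications turn the short exact sequence above into the displayed extension, completing the proof. Apart from the identification of $\alpha$ in the second step, every step is bookkeeping with the isomorphisms \eqref{pac}, \eqref{rel}, \eqref{corel} and \eqref{relr}.
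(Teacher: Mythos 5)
Your proposal is correct and follows essentially the same route as the paper: specialize Theorem \ref{choo} to $S=\spec k$, identify $H^{\le 1}_{\et}(k,\mu^{\widetilde{Y}\!/\le Y})$ via \eqref{pac} and \eqref{rel}, extract the extension of $H^{\le 2}_{\et}(k,\mu^{\widetilde{Y}\!/\le Y})$ from \eqref{seq1}, and translate via \eqref{corel} and \eqref{relr} for $r=3$. Your explicit identification of $\alpha$ with the natural injection, using the (sign-)commutativity of diagram \eqref{kbg} with $\overline{\partial}^{\e 1}$ an isomorphism in the field case, is a point the paper leaves implicit in its ``immediate'' conclusion, and it is a worthwhile addition since it also justifies Theorem \ref{thm a}(i).
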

\begin{proof} By \eqref{pac} and \eqref{rel}, there exist canonical isomorphisms
\begin{equation}\label{cap}
H^{\le 1}_{\le\et}\lbe(k,\mu^{\widetilde{Y}\!/\le Y})\isoto \br(\e\widetilde{Y}\!/\e Y\le)\isoto \prod_{y\e\in\e Y}\bigcap_{\e\widetilde{y}\e\in\e \widetilde{Y}_{y}}\!\br\lbe(\kappa(\widetilde{y}\,)/\kappa(y)).
\end{equation}
On the other hand, the exactness of \eqref{seq1} yields an extension
\[
0\to \frac{\br\e\widetilde{Y}}{\psi^{\lle*}\lbe(\br\e Y\le)}\to H^{\le 2}_{\le\et}\lbe(k,\mu^{\widetilde{Y}\!/\le Y})\to H^{\le 3}\lbe(\e \widetilde{Y}\!/Y\lle)\to 0.
\]
The theorem is now immediate from Theorem \ref{fund} via the isomorphisms \eqref{relr} (for $r=3$) and \eqref{corel}.
\end{proof}

\smallskip

\begin{corollary}\label{fund3} There exists a canonical exact sequence of abelian groups
\[
0\to \krn \varphi_{1}^{*}\to \brok\!\overset{\!\!\varphi_{1}^{*}}{\lra}\!\brokp\!\to\! \cok \varphi_{\lbe\rm a}^{*}\to 0,
\]
where $\krn \varphi_{1}^{*}$ is an extension
\[
0\to\frac{\bxkp}{\bxk}\to \krn \varphi_{1}^{*}\to \cok\!\!\!\left[\le \frac{\bxkp}{\bxk}\into \prod_{y\e\in\e Y}\!\lbe\bigcap_{\e\widetilde{y}\e\in\e \widetilde{Y}_{y}}\!\!\br\lbe(\kappa(\widetilde{y}\,)/\kappa(y))\lbe\right]\to 0.
\]
\end{corollary}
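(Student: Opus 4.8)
The plan is to obtain this corollary as the specialization of Corollary \ref{fund} to the case $S=\spec k$, combined with the explicit computation of $H^{\le 1}_{\et}\lbe(k,\mu^{\widetilde{Y}\!/\le Y})$ already carried out in the proof of Theorem \ref{fund2}. No ingredient beyond these two is required, so the work is essentially one of transcription.

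First I would apply Corollary \ref{fund} with $S=\spec k$. By Remark \ref{int}(a) we have $\bro=\brok$ and $\brop=\brokp$, so that corollary yields verbatim the first exact sequence
\[
0\to \krn \varphi_{1}^{*}\to \brok\!\overset{\!\!\varphi_{1}^{*}}{\lra}\!\brokp\!\to\! \cok \varphi_{\lbe\rm a}^{*}\to 0,
\]
together with the extension
\[
0\to\frac{\bxkp}{\bxk}\to \krn \varphi_{1}^{*}\to \cok\!\!\be\left[\frac{\bxkp}{\bxk}\overset{\alpha}{\hookrightarrow} H^{\le 1}_{\et}\lbe(k,\mu^{\widetilde{Y}\!/\le Y})\right]\to 0,
\]
where $\alpha$ is the map \eqref{comp}. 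It thus remains only to rewrite the right-hand cokernel in the form stated.

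To do this I would substitute the canonical isomorphism \eqref{cap},
\[
H^{\le 1}_{\le\et}\lbe(k,\mu^{\widetilde{Y}\!/\le Y})\isoto \prod_{y\e\in\e Y}\bigcap_{\e\widetilde{y}\e\in\e \widetilde{Y}_{y}}\!\br\lbe(\kappa(\widetilde{y}\,)/\kappa(y)),
\]
into the target of $\alpha$. The one point to keep straight is that this must be the very isomorphism used in the proof of Theorem \ref{fund2}, so that its composition with $\alpha$ coincides with the injection of Theorem \ref{fund2} (equivalently Theorem \ref{thm a}(i)); this compatibility is exactly what that proof records, resting ultimately on diagram \eqref{kbg} and Lemma \ref{comm}, and so is bookkeeping rather than genuine work. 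I therefore do not expect any serious obstacle here. With this identification the displayed cokernel becomes
\[
\cok\!\!\!\left[\le \frac{\bxkp}{\bxk}\into \prod_{y\e\in\e Y}\!\lbe\bigcap_{\e\widetilde{y}\e\in\e \widetilde{Y}_{y}}\!\!\br\lbe(\kappa(\widetilde{y}\,)/\kappa(y))\lbe\right],
\]
which is precisely the quotient term in the statement, completing the argument.
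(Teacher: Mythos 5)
Your proposal is correct and is in substance the paper's own argument: the paper proves Corollary \ref{fund3} by first specializing Theorem \ref{choo} to $S=\spec k$ and identifying $H^{\le 1}_{\et}\lbe(k,\mu^{\widetilde{Y}\!/\le Y})$ via \eqref{cap} (this is Theorem \ref{fund2}), and then running the snake-lemma argument on diagram \eqref{ali}, whereas you run the snake-lemma argument first (Corollary \ref{fund}) and then specialize and substitute \eqref{cap} --- the two orderings commute and rest on identical ingredients. The compatibility point you flag (that the composite of $\alpha$ with \eqref{cap} is the injection of Theorem \ref{fund2}) is indeed automatic, since that injection is defined as exactly this composite.
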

\begin{proof} The corollary follows from Theorem \ref{fund2} in the same way that Corollary \ref{fund} follows from Theorem \ref{choo}.
\end{proof}

\smallskip

\begin{remark}\label{alc} For each $y\in Y$, we have $I\lbe\big(\le\widetilde{Y}_{\lbe y}\lbe\lle\big)=\gcd\{\e [\le\kappa\le(\le\widetilde{y}\,)\be\colon\! \kappa(y)\le]\colon \widetilde{y}\e\in\e \widetilde{Y}_{y}\e\}$, whence $I\lbe\big(\le\widetilde{Y}_{\lbe y}\lbe\lle\big)\cdot\left(\le\bigcap_{\,\e\widetilde{y}\e\in\e \widetilde{Y}_{\lbe y}}\! H^{\le r}\lbe(\kappa(\le\widetilde{y}\,)/\lbe \kappa(y))\!\right)\!=0$ for all $r\geq 1$ by \eqref{ihrk}. Consequently, if $m(\e \widetilde{Y}\!/Y\le)\defeq\lcm\{\le I\lbe\big(\le\widetilde{Y}_{\lbe y}\lbe\lle\big)\colon y\in Y\lle\}$, then $\prod_{\, y\e\in\e Y}\be\bigcap_{\,\e\widetilde{y}\e\in\e \widetilde{Y}_{y}}\be H^{\le r}\be(\kappa(\le\widetilde{y}\,)/\lbe \kappa(y))$ is an $m(\e \widetilde{Y}\!/Y\le)\e$-\e torsion group for every $r\geq 1$. In particular, $\prod_{\,y\e\in\e Y}\bigcap_{\,\widetilde{y}\e\in\e \widetilde{Y}_{y}}\br\lbe(\kappa(\widetilde{y}\,)/\kappa(y))$, and therefore also $\bxkp/\le \bxk$ and $H^{\le 1}_{\le\et}\lbe(k,\mu^{\widetilde{Y}\!/\le Y})$ \eqref{cap}, are $m(\e \widetilde{Y}\!/Y\le)\e$-\e torsion groups.
\end{remark}

\smallskip

We will now derive some concrete consequences of the preceding general considerations.

\smallskip

If $\psi\colon \widetilde{Y}\to Y$ is a {\it universal homeomorphism} then, for every $y\e\in\e Y$, $\psi^{\le-1}\lbe(y)$ is a single (closed) point and $\kappa(\psi^{\le-1}\lbe(y))/\kappa(y)$ is a finite and purely inseparable extension \cite[04DC, Lemma 29.45.5 and 01S2, Lemma 29.10.2]{sp}. Set 
$[\e \kappa(\psi^{\le-1}\lbe(y))\colon\! \kappa(y)\le]\!=\!p^{\le n_{\lle y}}$,  
where $n_{\lle y}\geq 0$ is an integer. Then, by \cite[Corollary 11.4.2, p.~437]{sa}, we have $\br\lbe(\kappa(\psi^{\le-1}\lbe(y))/\kappa(y))=\br\lbe(\kappa(y))_{p^{\lle n_{\lbe\le y}}}$, whence
\begin{equation}\label{upa}
\prod_{y\e\in\e Y}\!\lbe\bigcap_{\e\widetilde{y}\e\in\e \widetilde{Y}_{y}}\!\!\br\lbe(\kappa(\widetilde{y}\,)/\kappa(y))=\prod_{\e y\e\in\e Y}\be\br\lbe(\kappa(y))_{\lle p^{\lle n_{\lbe\lle y}}}.
\end{equation}

\begin{proposition}\label{uht} Assume that $\psi\colon \widetilde{Y}\to Y$ is a {\rm universal homeomorphism}. Then there exists a canonical exact sequence of abelian groups
\[
0\to \krn \varphi_{1}^{*}\to \brok\!\overset{\!\!\varphi_{1}^{*}}{\lra}\!\brokp\!\to\! 0.
\]
The group $\krn \varphi_{1}^{*}$ is an extension
\[
0\to\frac{\bxkp}{\bxk}\to \krn \varphi_{1}^{*}\to \cok\!\!\be\left[\le \frac{\bxkp}{\bxk}\into \prod_{\e y\e\in\e Y}\be\br\lbe(\kappa(y))_{\lle p^{\lle n_{\lbe\le\lle y}}}\!\right]\to 0
\]
where, for each $y\in Y$, $p^{\e n_{\le y}}=[\e \kappa(\psi^{\le-1}\lbe(y))\colon\! \kappa(y)\le]$.
\end{proposition}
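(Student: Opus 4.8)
The plan is to reduce everything to Corollary \ref{fund3} and Theorem \ref{fund2}, for which the single missing ingredient is the vanishing $\cok\varphi_{\rm a}^{*}=0$. Granting this, the exact sequence of Corollary \ref{fund3} collapses at once to the asserted short exact sequence $0\to\krn\varphi_{1}^{*}\to\brok\to\brokp\to 0$, while the description of $\krn\varphi_{1}^{*}$ as an extension is obtained from the extension for $\krn\varphi_{1}^{*}$ in Corollary \ref{fund3} by substituting the identification \eqref{upa}, which is available precisely because $\psi$ is a universal homeomorphism. Thus the whole proposition reduces to proving $\cok\varphi_{\rm a}^{*}=0$.

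To establish this I would show that $H^{2}_{\et}(k,\mu^{\widetilde{Y}/Y})=0$. Indeed, by exactness of the sequence of Theorem \ref{fund2} at $\brap$, the natural map identifies $\cok\varphi_{\rm a}^{*}$ with a subgroup of $H^{2}_{\et}(k,\mu^{\widetilde{Y}/Y})$, so its vanishing forces $\cok\varphi_{\rm a}^{*}=0$. Now $\mu^{\widetilde{Y}/Y}$ is a commutative, smooth, connected and affine algebraic $k$-group \eqref{myy}, so by Lemma \ref{spl}(ii) it suffices to prove that $\mu^{\widetilde{Y}/Y}$ is \emph{unipotent}.

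Since unipotence is a geometric property, I would verify it after base change to an algebraic closure $\overline{k}$, where every smooth connected commutative affine group splits canonically as a torus times a unipotent group. Writing $Y=\spec A$ and $\widetilde{Y}=\spec\widetilde{A}$, the base-changed Weil restriction $R_{\widetilde{Y}/k}(\G_{m})_{\overline{k}}=R_{\widetilde{A}\otimes_{k}\overline{k}/\overline{k}}(\G_{m})$ decomposes over the finitely many local factors of the Artinian $\overline{k}$-algebra $\widetilde{A}\otimes_{k}\overline{k}$ as a product of copies of $\G_{m}$ (one per factor, from the residue field $\overline{k}$) and a unipotent group (from the maximal ideals); hence its maximal torus has rank equal to the number of $\overline{k}$-points of $\spec\widetilde{A}$, namely $\sum_{\widetilde{y}}[\kappa(\widetilde{y})\colon k]_{\rm sep}$, and similarly for $Y$. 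The hypothesis that $\psi$ is a universal homeomorphism forces each residue extension $\kappa(\widetilde{y})/\kappa(y)$ to be purely inseparable, so the separable degrees over $k$ agree and the base-changed map $\psi_{\overline{k}}\colon\spec(\widetilde{A}\otimes_{k}\overline{k})\to\spec(A\otimes_{k}\overline{k})$ is a bijection on points. Matching the $\G_{m}$-factors along this bijection (the induced residue-field maps $\overline{k}\to\overline{k}$ being the identity), I conclude that $\psi^{*}$ restricts to an isomorphism on maximal tori. As a homomorphism of such groups over $\overline{k}$ respects the torus-times-unipotent decomposition, the cokernel $\mu^{\widetilde{Y}/Y}_{\overline{k}}$ then has trivial maximal torus and is therefore unipotent, as required.

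The main obstacle is exactly the torus computation of the last paragraph over an imperfect field: one must correctly isolate the maximal torus of a Weil restriction of $\G_{m}$ along an Artinian algebra (the nilpotent contributions being unipotent) and verify that $\psi^{*}$ induces an \emph{isomorphism} on maximal tori rather than merely an isogeny. Reducing unipotence to the perfect field $\overline{k}$, where the canonical splitting and the computation of torus ranks are available, is what makes this tractable, and the universal-homeomorphism hypothesis enters precisely where it guarantees both the equality of separable degrees and the pointwise bijection that pins the torus map down to an isomorphism.
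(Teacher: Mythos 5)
Your proposal is correct, and its overall skeleton coincides with the paper's: both arguments reduce the proposition to Theorem \ref{fund2} and Corollary \ref{fund3} together with the identification \eqref{upa}, and both obtain the vanishing $\cok\varphi_{\lbe\rm a}^{*}=0$ from $H^{\le 2}_{\et}\lbe(k,\mu^{\widetilde{Y}\!/\le Y})=0$, which in turn follows from Lemma \ref{spl}(ii) once $\mu^{\widetilde{Y}\!/\le Y}$ is known to be unipotent. The genuine difference lies in how unipotence is established: the paper simply cites Brion \cite[Proposition 4.11 and Remark 4.12]{bri15}, which asserts exactly that when $\psi$ is a universal homeomorphism the group $\mu^{\widetilde{Y}\!/\le Y}$ is a commutative, smooth, connected unipotent algebraic $k$-group, whereas you re-prove (the needed direction of) this fact from scratch. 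Your argument — base change the exact sequence \eqref{uno} to $\kbar$, decompose each Weil restriction of $\G_{\le m}$ along the local factors of the Artinian $\kbar$-algebras as (maximal torus) $\times$ (unipotent part), observe that a universal homeomorphism forces purely inseparable residue extensions and hence a bijection on $\kbar$-points, and conclude that $\psi^{*}$ is an isomorphism on maximal tori, so the cokernel is unipotent — is sound: homomorphisms of commutative groups over the perfect field $\kbar$ do respect the torus--unipotent splitting, quotients of unipotent groups are unipotent, and unipotence descends along $\kbar/k$. Two small points deserve attention. First, you cite \eqref{myy} for smoothness, connectedness and affineness of $\mu^{\widetilde{Y}\!/\le Y}$, but \eqref{myy} is merely its definition as a quotient sheaf; those properties come from Brion's \cite[Remark 2.6]{bri15}, as quoted after \eqref{dos0} in the introduction, so some appeal to \cite{bri15} is unavoidable even in your route. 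Second, your torus computation must (and does, implicitly) use that the canonical copy of $\G_{\le m}$ inside the Weil restriction of a local Artinian $\kbar$-algebra is its full maximal torus, the quotient $1+\mathfrak{m}$ being unipotent because $\mathfrak{m}$ is nilpotent. What your approach buys is a self-contained proof that makes transparent precisely where the universal-homeomorphism hypothesis enters; what the paper's citation buys is brevity, and also the converse implication (unipotence characterizes universal homeomorphisms), which is not needed here.
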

\begin{proof} By \cite[Proposition 4.11 and Remark 4.12]{bri15}, $\mu^{\widetilde{Y}\!/\e Y}$ is a commutative, smooth and connected unipotent algebraic $k\e$-\e group. Consequently, $H^{\lle 2}\lbe(k_{\le\et},\mu^{\widetilde{Y}\!/\e Y})=0$ by Lemma \ref{spl}(ii). The proposition is now immediate from Theorem \ref{fund2} and Corollary \eqref{fund3} using \eqref{upa}\,.
\end{proof}

\begin{corollary}\label{uht2} Assume that $\psi\colon \widetilde{Y}\to Y$ is a universal homeomorphism and that at least one of the following conditions holds:
\begin{enumerate}
\item[(i)] $k$ is perfect, or
\item[(ii)] $\psi$ induces isomorphisms on residue fields.
\end{enumerate}
Then $\varphi_{1}^{*}\colon \brok\to\brokp$ is an isomorphism of abelian groups.
\end{corollary}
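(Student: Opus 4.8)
The plan is to deduce this corollary directly from Proposition \ref{uht} by showing that in either case (i) or (ii) the kernel $\krn\varphi_{1}^{*}$ vanishes. Since Proposition \ref{uht} already establishes that $\varphi_{1}^{*}$ is surjective whenever $\psi$ is a universal homeomorphism, the only thing left to prove is injectivity, i.e.\ that $\krn\varphi_{1}^{*}=0$. By the extension displayed in Proposition \ref{uht}, it suffices to show that both outer terms
\[
\frac{\bxkp}{\bxk}\qquad\text{and}\qquad\cok\!\!\left[\frac{\bxkp}{\bxk}\into\prod_{\e y\e\in\e Y}\br\lbe(\kappa(y))_{\lle p^{\lle n_{\lbe\le\lle y}}}\right]
\]
vanish. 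Both of these are controlled by the group $\prod_{\e y\e\in\e Y}\br\lbe(\kappa(y))_{\lle p^{\lle n_{\lbe\le\lle y}}}$, since $\bxkp/\bxk$ injects into it by Theorem \ref{thm a}(i) together with \eqref{upa}, and the cokernel is a quotient of it. Hence it is enough to prove that this product group is trivial in each of the two cases.

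First I would treat case (ii). If $\psi$ induces isomorphisms on residue fields, then for every $y\in Y$ the extension $\kappa(\psi^{-1}(y))/\kappa(y)$ is trivial, so $n_{\le y}=0$ and $\br\lbe(\kappa(y))_{\lle p^{\lle 0}}=\br(\kappa(y))_{1}=0$. Thus the entire product $\prod_{y\e\in\e Y}\br(\kappa(y))_{\lle p^{\lle n_{\lbe y}}}$ is zero, forcing both $\bxkp/\bxk=0$ and the displayed cokernel to vanish, whence $\krn\varphi_{1}^{*}=0$.

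For case (i), suppose $k$ is perfect. Then each residue field $\kappa(y)$ is a finite extension of a perfect field and is therefore itself perfect, so the finite purely inseparable extension $\kappa(\psi^{-1}(y))/\kappa(y)$ must be trivial, giving $n_{\le y}=0$ exactly as in case (ii). Alternatively, and perhaps more transparently, over a perfect field a universal homeomorphism between reduced finite schemes is an isomorphism on residue fields, so case (i) reduces to case (ii). Either way the product group vanishes and injectivity follows. Combining the two cases with the surjectivity from Proposition \ref{uht} gives the isomorphism $\varphi_{1}^{*}\colon\brok\isoto\brokp$.

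I expect no serious obstacle here: the corollary is a clean specialization of Proposition \ref{uht}, and the only genuine content is the observation that perfectness of $k$ forces the purely inseparable residue extensions to be trivial. The one point requiring a moment's care is confirming that $n_{\le y}=0$ genuinely kills every factor $\br(\kappa(y))_{\lle p^{\lle n_{\lbe y}}}$ via \eqref{upa}, and that the vanishing of the product simultaneously annihilates both terms of the extension defining $\krn\varphi_{1}^{*}$; once that is in hand, the result is immediate.
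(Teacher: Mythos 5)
Your proposal is correct, and its overall skeleton matches the paper's proof exactly: both deduce the corollary from Proposition \ref{uht} by showing that $\prod_{y\in Y}\br\lbe(\kappa(y))_{p^{n_y}}$ vanishes, which kills both outer terms of the extension defining $\krn\varphi_{1}^{*}$ (the subgroup $\bxkp/\e\bxk$ because it injects into the product, the cokernel because it is a quotient of it), and then uses the surjectivity already contained in Proposition \ref{uht}. Case (ii) is handled identically. The one genuine divergence is case (i): you argue that perfectness of $k$ makes each $\kappa(y)$ perfect, hence admits no nontrivial finite purely inseparable extension, so $n_y=0$ and case (i) reduces to case (ii); the paper instead leaves $n_y$ arbitrary and invokes the fact that the Brauer group of a perfect field has no nontrivial $p\e$-torsion (Ford, \cite[Corollary 11.4.5, p.~439]{sa}), so that $\br\lbe(\kappa(y))_{p^{n_y}}=0$ regardless of the degree. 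Your route is slightly more elementary (pure field theory, no input about Brauer groups of perfect fields), while the paper's isolates the torsion statement that is the ``real'' reason the obstruction dies. One small caution: your alternative phrasing that ``over a perfect field a universal homeomorphism between reduced finite schemes is an isomorphism on residue fields'' smuggles in a reducedness hypothesis that need not hold here ($\widetilde{Y}$ is an arbitrary finite closed subscheme of $\widetilde{X}$, and is non-reduced in the key applications, e.g.\ \eqref{dsn2}); fortunately your primary argument concerns only the residue field extension $\kappa(\psi^{-1}(y))/\kappa(y)$ itself and does not need reducedness, so the proof stands.
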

\begin{proof} If $k$ is perfect, then $\kappa(y)$ is perfect for every $y\e\in\e Y$ by \cite[Corollary 1, p.~A.V.43]{bca}. Consequently, $\br\e\kappa(y)$ contains no nontrivial $p\e$-torsion elements by \cite[Corollary 11.4.5, p.~439]{sa}, whence $\br\lbe(\kappa(y))_{p^{n_{\lbe\le y}}}=0$ for every $y\in Y$. In case (ii) we have $p^{\le n_{\lbe\le y}}=[\e \kappa(\psi^{\le-1}\lbe(y))\colon\! \kappa(y)\le]=1$ for all $y\in Y$, whence $\br\lbe(\kappa(y))_{p^{n_{\lbe\le y}}}=0$ as well for every $y\in Y$. The corollary is now immediate from the proposition.
\end{proof}

\smallskip

In order to state the final result of this section, we need the following definitions.

\smallskip

A finite morphism of $k$-schemes $X^{\prime}\to X$ is called a {\it partial seminormalization} (of $X$) if (a) $X^{\prime}$ is reduced, and (b) for every $x\in X$, $(X^{\prime}\times_{X}\spec \kappa(x))_{\rm red}=\spec \kappa( x^{\le\prime}\le)$ is a one\e-\le point scheme and the induced map $\kappa(x)\to\kappa( x^{\le\prime}\le)$ is an isomorphism \cite[Definition 10.11]{kk}. By \cite[p.~307]{kk}, there exists a unique largest partial seminormalization of $X$ that dominates every other partial seminormalization of $X$. It is denoted by $\sigma\colon X^{ S\lbe N}\to X$ and called the {\it seminormalization morphism of $X$}. If $Y$ is the non\e-\le normal locus of $X$, then $X^{\lbe S\lbe N}$ can be obtained from $X^{\lbe N}$ via a pinching diagram of the form
\begin{equation}\label{dsn}
\xymatrix{(\lbe X^{\lbe N}\!\be\times_{\lbe X}\be\! Y\e)_{\rm red}\,\ar@{^{(}->}[r]\ar[d]&
X^{\lle\be N}\ar[d]\\
Z\,\ar@{^{(}->}[r]&X^{\lbe S\lbe N}
}
\end{equation}
for a certain reduced $Y$-scheme $Z$. See \cite[\S10.18, p.~310]{kk} for more details.
In particular, seminormal curves are obtained from their normalizations by pinching {\it points} only (to obtain an arbitrary curve $X$ from $X^{\lbe N}\!$ via pinching, one also needs to pinch {\it tangent vectors} on $X^{\lbe N}$ to recover any ``cuspidal" singularities that $X$ may have. See \cite[\S1]{ss}).

\begin{proposition}\label{sn} Let $k$ be any field and let $X$ be a proper and geometrically integral {\rm FA} $k\le$-\le scheme with a finite non\e-\le normal locus. If $\sigma\colon X^{\lbe S\lbe N}\to X$ is the seminormalization morphism of $X$, then the pullback map $\sigma^{\le *}\colon \bro \to \br_{\lbe 1} X^{\lbe S\lbe N}$ is an isomorphism of abelian groups.
\end{proposition}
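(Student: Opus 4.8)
The plan is to realize $X$ \emph{itself} as a pinching of $X^{\lbe S\lbe N}$ along a universal homeomorphism and then quote Corollary \ref{uht2}. First I would record the properties of $\sigma$. The normalization morphism $X^{\lbe N}\to X$ is an isomorphism over the complement of the finite non-normal locus and factors through $\sigma$, so $\sigma$ is finite and restricts to an isomorphism away from a finite set of closed points. Moreover $\sigma$ is a partial seminormalization, so property (b) of \cite[Definition 10.11]{kk} gives, for each $x\in X$, that the reduced fibre $(\lbe X^{\lbe S\lbe N}\!\times_{\lbe X}\spec\kappa(x))_{\rm red}=\spec\kappa(x^{\le\prime})$ is a one-point scheme with $\kappa(x)\isoto\kappa(x^{\le\prime})$. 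By \cite[01S2, 04DC]{sp} this means precisely that $\sigma$ is a universal homeomorphism inducing isomorphisms on all residue fields.

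Next I would verify that $X^{\lbe S\lbe N}$ meets the standing hypotheses of Section \ref{piag} for the role of $\widetilde{X}$. It is reduced by definition; it is proper over $k$ and an FA scheme because the finite (hence affine) homeomorphism $\sigma$ carries affine open covers of $X$ to affine open covers of $X^{\lbe S\lbe N}$, so that every finite subset lies in an affine open. For geometric integrality I would argue that $X^{\lbe S\lbe N}$ is integral (it is reduced, and $\sigma$ is a homeomorphism onto the irreducible $X$) and birational to $X$, whence $k(X^{\lbe S\lbe N})=k(X)$; since $X^{\lbe N}$ is geometrically integral by Lemma \ref{bri}, the extension $k(X)/k$ is separable and $k$ is separably closed in it, and therefore the reduced, irreducible, finite-type $k$-scheme $X^{\lbe S\lbe N}$ is geometrically integral.

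With this in place I would build the pinching square. Let $Y$ be the closed subscheme of $X$ defined by the conductor ${\rm Ann}_{\e\cO_{\lbe X}}\be(\sigma_{*}\cO_{X^{\lbe S\lbe N}}\!/\cO_{\lbe X})$ and set $\widetilde{Y}=Y\times_{X}X^{\lbe S\lbe N}$, a finite closed subscheme of $X^{\lbe S\lbe N}$. Exactly as for the normalization recalled in the Introduction --- applying \cite[\S5]{fer} and \cite[\S3.1]{lau} to the finite morphism $\sigma$, which is an isomorphism away from the finite subscheme $Y$ and has reduced source --- the scheme $X$ is the pushout of $(\lbe\widetilde{Y}\hookrightarrow X^{\lbe S\lbe N},\psi)$, where $\psi\colon\widetilde{Y}\to Y$ is the projection, so that \eqref{pin} holds with $\widetilde{X}=X^{\lbe S\lbe N}$ and $\varphi=\sigma$. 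Because $\widetilde{Y}=Y\times_{X}X^{\lbe S\lbe N}$, the morphism $\psi$ is the base change of $\sigma$ along $Y\hookrightarrow X$; hence $\psi$ is again a universal homeomorphism and inherits from $\sigma$ the property of inducing isomorphisms on residue fields. Corollary \ref{uht2}(ii) then applies and yields that $\varphi_{1}^{*}=\sigma^{\le *}\colon\bro\to\br_{\lbe 1}X^{\lbe S\lbe N}$ is an isomorphism, as claimed.

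I expect the genuine work to lie in the two verification steps rather than in the final appeal to Corollary \ref{uht2}, which is purely formal. The most delicate points are the geometric integrality of $X^{\lbe S\lbe N}$ over a possibly imperfect $k$ (handled through Lemma \ref{bri} and the separability of $k(X)/k$) and the confirmation that the conductor square for $\sigma$ is a legitimate pinching diagram in the sense of Section \ref{piag} --- in particular that $\psi$ is schematically dominant --- for which I would lean on the Ferrand/Laumon--Moret-Bailly description of pinchings in terms of conductors.
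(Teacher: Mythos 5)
Your proof is correct, but it follows a genuinely different route from the paper's. You realize $X$ as a \emph{single} pinching of $X^{SN}$, namely along the conductor square of $\sigma$ itself, and apply Corollary \ref{uht2}(ii) once; the key observations are that Koll\'ar's definition of a partial seminormalization makes $\sigma$ a finite universal homeomorphism inducing isomorphisms on residue fields, and that this property passes to the base change $\psi\colon\widetilde{Y}\to Y$. The paper never forms the conductor square of $\sigma$: it first uses Koll\'ar's diagram \eqref{dsn} (a pinching of $X^{N}$) to see that $X^{SN}$ is a proper, geometrically integral FA $k$-scheme, and then invokes Overkamp's factorization theorem \cite[Theorem 2.24(i)]{ov21} --- after arguing that Overkamp's proofs, written for $\dim X=1$, extend to the case where $\cO_{X}\to\nu_{*}\cO_{X^{N}}$ has finite-length cokernel --- to write $\sigma$ as a chain of elementary pinchings \eqref{dsn2} of tangent vectors $\spec\lbe(\kappa(x_{i+1})[\varepsilon]/(\varepsilon^{2}))\to\spec\kappa(x_{i+1})$, applying Corollary \ref{uht2}(ii) to each factor and composing. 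Your route is shorter and bypasses the paper's most delicate step (the ``close examination'' needed to push Overkamp's results beyond dimension one); what it needs instead is that the conductor square of the finite, schematically dominant morphism $\sigma$, an isomorphism off a finite set, is cocartesian --- the same standard fact the paper itself uses for the normalization in the Introduction and in the proof of Theorem \ref{rgn}, whose proof ($A=A/\mathfrak{c}\times_{\widetilde{A}/\mathfrak{c}}\widetilde{A}$ for the conductor ideal $\mathfrak{c}$, together with Ferrand's local construction of pushouts) is insensitive to replacing $\nu$ by $\sigma$; since \cite[\S 3.1]{lau} is stated for the normalization, you should make that transfer explicit. The paper's longer route does record the finer fact that $\sigma$ is a composite of tangent-vector pinchings, but that information is not needed here. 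Two small patches to your write-up: treat separately the degenerate case where $X$ is already seminormal, since then $Y=\emptyset$ violates the faithful-flatness requirement on $Y$ in \eqref{pin} (but $\sigma$ is an isomorphism and there is nothing to prove); and your geometric-integrality argument can be streamlined --- the proof of Lemma \ref{bri} applies verbatim to $X^{SN}$, whose affine rings are subrings of $k(X)=k(X^{SN})$, so the detour through separability of $k(X)/k$ is unnecessary.
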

\begin{proof} By \cite[0BXQ, Lemma 33.27.1(1), and 035E, Lemma 29.54.5(2)]{sp}, the normalization morphism $\nu\colon X^{\lle\be N}\to X$ is finite and surjective. Thus, by \cite[II, Proposition 5.4.2(ii), Theorem 5.5.3(i) and Corollary 6.1.11]{ega} and \cite[\S2.2(3)]{gll}, $X^{\lle\be N}$ is a proper FA $k\e$-\le scheme. Further, by Lemma \ref{bri}\,, $X^{\lle\be N}$ is geometrically integral. Thus $X^{\lle\be N}$ is a proper and geometrically integral FA $k\e$-scheme. Consequently, \eqref{dsn} is a pinching diagram of the form \eqref{pin}, whence $X^{\lbe S\lbe N}$ is also a proper and geometrically integral FA $k\e$-scheme (see the beginning of section \ref{piag}). We now observe that Overkamp \cite[Theorem 2.24(i)]{ov21} has partially generalized \cite[Lemma 0C1L]{sp} by showing that, if $k$ is any field and $\dim X=1$, then the seminormalization morphism $\sigma\colon X^{\lbe S\lbe N}\to X$ factors as
\begin{equation}\label{of}
X^{\lbe S\lbe N}=X_{1}\overset{\!\!\sigma_{\lbe\lle 1}}{\lra} X_{2}\overset{\!\!\sigma_{ 2}}{\lra}\dots \overset{\!\!\sigma_{\lbe\lle n}}{\lra} X_{n+1}=X
\end{equation}
for some $n\in\N$ where, for each $i=1,\dots,n$, $X_{i+1}$ is obtained from $X_{i}$ via a pinching diagram of the form
\begin{equation}\label{dsn2}
\xymatrix{\spec\!\!\lbe\left (\!\frac{\kappa(x_{i+1})[\varepsilon]}{(\varepsilon^{2})}\!\right)\ar@{^{(}->}[r]\ar[d]_{\psi_{i}}& X_{i}	\ar[d]_{\sigma_{\lbe\lle i}}\\
\spec \kappa(x_{i+1})\,\ar@{^{(}->}[r]& X_{i+1}
}
\end{equation}
for some closed point $x_{i+1}\in X_{i+1}$. A close examination of the proofs of \cite[Proposition 2.22, Corollary 2.23 and Theorem 2.24(i)]{ov21} reveals that these proofs (and therefore the factorization \eqref{of} and the associated diagrams \eqref{dsn2} above) remain valid if the hypothesis $\dim X=1$ is replaced with the condition that the cokernel of the canonical morphism $\cO_{\lbe X}\into\nu_{*}\cO_{\be X^{\lle\be N}}$ be of finite length, which certainly holds since $X$ has a finite non\le-\le normal locus.
Next we observe that, since $X_{1}=X^{\lbe S\lbe N}$ is a proper and geometrically integral FA $k\e$-scheme, diagram \eqref{dsn2} with $i=1$ is a pinching diagram of the form \eqref{pin}. It then follows that each of the diagrams \eqref{dsn2} is of the form \eqref{pin}. Finally, since the map $\psi_{i}$ in \eqref{dsn2} is a universal homeomorphism that induces isomorphisms on residue fields, Corollary \ref{uht2}(ii) shows that the pullback map $\sigma_{\lbe\lle i}^{*}\colon \br_{\lbe 1} X_{i+1}\to \br_{\lbe 1} X_{i}$ is an isomorphism for every $i=1,\dots,n$. Since $\sigma^{*}=\sigma_{\lbe\lle 1}^{*}\circ\dots\circ\sigma_{\lbe\lle n}^{*}$, the proposition follows.
\end{proof}

\section{Examples}

\begin{example}\label{ex1}  In the pinching diagram \eqref{pin} with $S=\spec k$, assume that  $\widetilde{X}$ is smooth and universally ${\rm CH}_{\le 0}\e$-trivial, i.e., the degree map ${\rm CH}_{\le 0}\lle(\lbe\widetilde{X}_{F}\be)\to\Z$ is an isomorphism for every field extension $F/k$. Examples include retract $k$-rational varieties, i.e., those $k$-schemes $\widetilde{X}$ for which there exists a dominant rational map $\Pn_{\! k}^{\e n}\dashrightarrow \widetilde{X}$ with a right inverse $\widetilde{X}\dashrightarrow \Pn_{\! k}^{\e n}$ for some $n\geq 1$. See \cite[\S1.2, pp.~34-35]{actp}\,. By \cite[Theorem 1.1]{a3b}, the pullback map $\br\le F\to \br\widetilde{X}_{\lbe F}$ is an isomorphism for every field extension $F/k$, whence $\bxkp=\brap=0$. Thus Corollary \ref{fund3} yields a canonical exact sequence of abelian groups
\[
0\to \prod_{\e y\e\in\e Y}\bigcap_{\,\e \widetilde{y}\e\in\e \widetilde{Y}_{y}}\!\br\lbe(\kappa(\widetilde{y}\,)/\kappa(\le y))\to \brok\to\br\e k\to 0.
\]	
The third map in the above sequence admits a canonical homomorphic section, namely the pullback map $f^{\lle*}\colon\br\e k\to \brok$. Thus there exists a canonical isomorphism of abelian groups
\[
\brok=\br\e k\oplus \prod_{y\e\in\e Y}\bigcap_{\e \widetilde{y}\e\in\e \widetilde{Y}_{y}}\!\br\lbe(\kappa(\widetilde{y}\,)/\kappa(\le y)).
\] 
\end{example}

\begin{example}\label{ex2} In the pinching diagram \eqref{pin} with $S=\spec k$, assume that  $\widetilde{X}$ is a Severi-Brauer $k$-variety, i.e., $\widetilde{X}^{\rm s}\simeq\Pn_{\! k^{\le\rm s}}^{\e\le{\rm dim}\le\lle \widetilde{X}}$. Then the Amitsur subgroup of $\widetilde{X}$ in $\br\e k$ is the finite cyclic subgroup of $\br\e k$ generated by the class of $\widetilde{X}$, i.e., $\bxkp=\langle\widetilde{X}\e\rangle$. See \cite[Theorem 5.4.1]{gs} and \cite[Theorem 3.5.5]{ctsk}. Further, the pullback map $\widetilde{f}^{\le *}\colon\br\e k\to\brop$ is surjective, i.e., $\brap=0$ \cite[p.~183]{ctsk}, whence $\widetilde{f}^{\le *}$ induces an isomorphism of abelian groups $\widetilde{g}\colon\br\e k/\langle\widetilde{X}\le\rangle\isoto\brop$. Let $g=
\widetilde{g}^{\e-1}\circ \varphi_{1}^{*}\colon \brok\to \br\e k/\langle\widetilde{X}\rangle$, where $X$ is as in diagram \eqref{pin}. Then, by Corollary \ref{fund3}\,, the algebraic Brauer group of the pinched Severi-Brauer variety $X$ can be described as follows: there exists a canonical exact sequence of abelian groups
\[
0\to \krn g\to\brok\overset{\!g}{\to}\br\e k/\langle\widetilde{X}\le\rangle\to 0,
\]
where $\krn g$ is an extension
\[
\hskip 1.5cm 0\to\frac{\langle\widetilde{X}\rangle}{\bxk}\to \krn g\to \cok\!\!\!\left[\le \frac{\langle\widetilde{X}\rangle}{\bxk}\into \prod_{y\e\in\e Y}\!\lbe\bigcap_{\e\widetilde{y}\e\in\e \widetilde{Y}_{y}}\!\!\br\lbe(\kappa(\widetilde{y}\,)/\kappa(y))\lbe\right]\to 0.
\]
\end{example}
\smallskip

\begin{example}\label{ex3} (cf. \cite[Example 3.23]{lau}) Let $k=\mathbb{F}_{\be 2}(t_{1}, t_{2})$ and let $G$ be the nontrivial form of $\G_{a,\le k}$ given by the equation $y^{\e 4}=x + t_{1}x^{\le 2}+ t_{2}^{2}\e x^{\le 4}$. The $k$-group $G$ is a $1\e$-\le dimensional $k$-wound group \cite[Definition B.2.1, p.~568]{cgp}. Let $\widetilde{X}$ be the normalization of the schematic closure of $G$ in $\Pn^{\le 2}_{\!k}$. Clearly, $\widetilde{X}(k)\neq\emptyset$, whence $\bxkp=0$. Now, since
$(\be\widetilde{X}_{\kbar})^{N}\simeq\Pn_{\be\kbar}^{\le 1}$ by \cite[proof of Lemma 1.1]{rus} and $\widetilde{X}_{\kbar}$ is obtained by pinching $(\be\widetilde{X}_{\kbar})^{N}$,  $\widetilde{X}_{\kbar}$ is integral, i.e., $\widetilde{X}$ is geometrically integral. Further, $\widetilde{X}-G=P_{\be\infty}$ is a non\e-\le smooth closed point with residue field $\kappa(P_{\be\infty})= \mathbb{F}_{\be 2}\big(t_{1}^{1\lbe/2}, t_{2}^{1\lbe/2}\le\lle\big)$ \cite[Example 3.23]{lau}. For every $c\in k$, set $K_{\lbe c}=k\big(t_{1}^{1\lbe/2}\be+c\e t_{2}^{1/2}\le\lle\big)\subset \kappa(P_{\be\infty})$ and let $X_{\lbe\lle c}$ be obtained by pinching $\widetilde{X}$ along $\spec \kappa(P_{\be\infty})$ via the canonical morphism $\spec \kappa(P_{\be\infty})\to\spec K_{c}\e$, i.e., $X_{c}$ is given by the pushout diagram
\[
\xymatrix{\spec \kappa(P_{\be\infty}) \ar@{^{(}->}[r]\ar[d]&\widetilde{X}\ar[d]^(.45){\nu}\\
\spec K_{\lbe c}\,\ar@{^{(}->}[r]&X_{\lbe\lle c}.
}
\]
The preceding diagram is of the form \eqref{pin} since the FA $k$-scheme $\widetilde{X}$ is proper and geometrically integral. Now, by \cite[Proposition 3.2]{lau}, the map $\nu$ above is the normalization morphism of $X_{c}\e$. Further, by \cite[Lemmas 2.3(iv) and 3.17 and Theorem 3.21(a)]{lau}, $X_{c}$ is a seminormal $k\le$-\le curve which is equipped with an almost homogeneous $G$-action, i.e., $X_{c}$ contains a homogeneous $G\le$-\le stable open subscheme. Now we observe that, since $\kappa(P_{\be\infty})=K_{c}\lbe\big(t_{2}^{1/2}\le\big)$, $\kappa(P_{\be\infty})/K_{c}$ is a purely inseparable quadratic extension. Consequently, Proposition \ref{uht} yields a canonical exact sequence of abelian groups
\[
0\to \br\lbe(\lbe K_{\lbe c})_{\lle 2}\to \br X_{c} \!\overset{\!\!\nu^{*}}{\lra}\!\br \widetilde{X}\!\to\! 0.
\]
Note that $\widetilde{X}$ is a regular non\le -\le smooth $k\e$-\le curve. For reasons that are explained in the next remark, and further illustrated in Example \ref{las} below, the methods of this paper are {\it useless} for determining the structure of $\br \widetilde{X}$.
\end{example}

\begin{remark}\label{rem} Let $k$ be any imperfect field and let $X$ be a proper and 
geometrically integral $k$-curve with non\e-\le normal locus $Y$. If $X^{\be N}$ is {\it geometrically normal}, i.e., smooth \cite[II, Corollary 7.4.5, and
${\rm IV}_{\be 4}$, Corollary 17.5.2]{ega}, Corollary \ref{fund3} applied to the normalization morphism $\nu_{X}\colon X^{\be N}\to X$ relates the Brauer group of $X$ to the (more accessible) Brauer group of the smooth curve $X^{\be N}$. However, there exist many proper and geometrically integral $k$-curves whose normalization is not smooth. See Example \ref{ex3} above and also \cite[Examples 6.8.2 and 6.12.3(2)]{kmt}, \cite[Example 1.22]{ach} and \cite[Example 3.1]{to}. In general, there exists a finite and purely inseparable extension $K/k$ such that the $K$-scheme $(\be X_{\lbe K}\be)^{ N}$ is smooth \cite[0BXQ, Lemma 33.27.4]{sp}. Corollary \ref{fund3} can then be applied to the triple $((\be X_{\lbe K}\be)^{ N}\!, X_{\lbe K}, \nu_{X_{K}}\!\times_{\lbe X_{\lbe K}}\be Y_{\lbe K})$ to relate the structure of $\br X_{\lbe K}$ to that of $\br(\be X_{\lbe K}\be)^{ N}$. However, the methods of this paper yield no information about $\br X$ itself, as we show in the next example.
\end{remark}

\begin{example}\label{las} Let $C$ be any nontrivial form of $\A^{\!1}_{\le k}$, i.e., $C_{k^{\le\lle\prime}}\simeq \A^{\!1}_{k^{\le\lle\prime}}$ for some nontrivial, finite and purely inseparable extension $k^{\e\prime}\!/k$. The indicated class of affine $k\le$-\le curves contains the underlying scheme of every $1\le$-\le dimensional $k\le$-\le wound group (e.g., the $k\le$-\le group $G$ discussed in Example \ref{ex3} above). See \cite[Proposition 4.7, p.~45]{kmt}. Now let $X$ be the regular completion of $C$ \cite[0BXX]{sp}. Then $P_{\be\infty}=X-C$ is a single (closed) point such that $k\subsetneq \kappa(\lbe P_{\be\infty}\lbe)\subseteq k^{\e\prime}$. See \cite[Lemma 1.1]{rus} and \cite[Remark 1.17]{ach}. The point $P_{\be\infty}$ is often non\e-\e smooth, in which case $X$ is a regular non\le-\le smooth $k\le$-\le curve. Next, let $K/k$ be any subextension of $k^{\e\prime}\!/k$ such that the regular completion of $C_{\lbe K}$, i.e., $(\be X_{\be K}\be)^{\lbe N}$ \cite[0BXX, Lemma 53.2.9]{sp}, is $\Pn_{\! K}^{\le 1}$ (e.g., $K=k^{\e\prime}\e$). Further, let $P$ (respectively, $Q$) be the unique point of $X_{\lbe K}$ (respectively, $\Pn^{\le 1}_{\!K}$) lying above $P_{\be\infty}$. Then $X_{\be K}$ fits into a pinching diagram
\[
\xymatrix{\Pn^{\le 1}_{\!K}\be\times_{\lbe X_{\lbe K}}\! Y_{\lbe K} \ar@{^{(}->}[r]\ar[d]_{\nu_{X_{\be K}}\!\times_{\lbe X_{\lbe K}}\be Y_{\lbe K}}&\Pn^{\le 1}_{\!K}	\ar[d]_{\nu_{\lle X_{\be K}}}\\
Y_{\be K}\,\ar@{^{(}->}[r]&X_{\lbe K},
}
\]
where $\nu_{X_{K}}\colon \Pn_{\! K}^{\le 1}\to X_{\lbe K}$ is the normalization morphism of $X_{K}$. The schemes $Y_{\lbe K}$ and $\Pn^{\le 1}_{\!K}\be\times_{\lbe X_{\lbe K}}\! Y_{\lbe K}$ are one\e-\le point schemes with underlying sets $P$ and $Q$, respectively \cite[\S4.1]{ach}. Since $K\subseteq \kappa(P)\subseteq \kappa(Q)\subseteq k^{\le\lle\prime}$, the extension $\kappa(Q)/\kappa(P)$ is purely inseparable, whence $\psi$ is a universal homeomorphism \cite[Proposition 4.11 and Remark 4.12]{bri15}. Thus, by \eqref{upa} and Corollary \ref{cor} (see also Example \ref{ex1}), there exists a canonical isomorphism of abelian groups
\[
\br X_{\lbe K}=\br K\be\oplus\be \br\lbe(\lbe\kappa(\be P))_{p^{\le d}},
\]
where $p^{\le d}\defeq[\le\kappa(Q)\colon\be \kappa(P)\le]$ divides $[\e k^{\e\lle\prime}\lbe\colon\!\be K\e]$. The preceding isomorphism essentially determines the structure of $\br X_{\lbe K}$. However, in order to determine the structure of $\br X$, one needs to understand the structure of $\br\lbe(\lbe X_{\be K}\lbe/\be X\lle)=\krn\![\e\br X\!\to\! \br X_{\lbe K}]$, which requires methods (to be discussed elsewhere) that are essentially different from those of this paper.
\end{example}

\section{The Roquette\e-Lichtenbaum theorem for a class of singular curves}\label{cin}

Unless stated otherwise, in this section $k$ denotes a non-archimedean local field, i.e., a finite extension of either $\Q_{\lle p}$ or $\mathbb{F}_{\be p}((t))$ for some prime number $p$. We exclude from our discussion the archimedean local fields since the results of this section over such fields are either trivial (when $k\simeq\mathbb C\le$) or already contained in \cite{bk} (when $k\simeq\mathbb R\le$).

\smallskip

The invariant map of local class field theory
\begin{equation}\label{inv}
{\rm inv}\colon \br\e k\isoto\Q/\le\Z
\end{equation}
is an isomorphism of abelian groups. In particular, every subgroup of finite exponent of $\br\e k$ is finite and cyclic, i.e., its order is equal to its exponent. If $X$ is an algebraic $k$-scheme, then $\bxk$ \eqref{relb} is a subgroup of $\br\e k$ that is annihilated by the index $I(\be X\lle)$ of $X$. Consequently, $\bxk$ is finite and cyclic and its order divides $I(\be X\lle)$.

If $K/k$ is a finite field extension, then \eqref{inv} induces an isomorphism of abelian groups
\begin{equation}\label{inv2}
\br\lbe(\be K\!/k)\isoto [K\colon\be k\e]^{-1}\Z/\le\Z\e.
\end{equation}
See \cite[Theorem 37, p.~155]{sh}.

\begin{theorem} \label{roq} {\rm (Roquette\e-Lichtenbaum)} Let $X$ be a smooth, proper and geometrically irreducible curve over a non-archimedean local field $k$. Then the invariant map \eqref{inv} induces an isomorphism of abelian groups
\[ 
\bxk\isoto I(\be X\lle)^{-1}\Z/\le\Z\,.
\]
Equivalently, $\bxk$ is a group of order $I(\be X\lle)$.
\end{theorem}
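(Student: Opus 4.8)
The plan is to start from the fundamental exact sequence \eqref{brpic} and the fact, recorded just before the statement, that \eqref{inv} embeds $\bxk$ into $\Q/\Z$. Since $X$ is a proper curve, Remark \ref{int}(b) gives $\br\xs=0$, so $\bro=\br X$ and $\bxk$ is exactly the Amitsur subgroup; by \eqref{tri0} it equals $\cok\be[\e\pic X\into(\pic\le\xs\le)^{\g}\e]$. Being a finite subgroup of $\br k\isoto\Q/\Z$ it is cyclic, annihilated by $I(\be X\lle)$, so its order divides $I(\be X\lle)$. Thus it suffices to prove the order is exactly $I(\be X\lle)$, after which \eqref{inv} identifies $\bxk$ with the unique subgroup $I(\be X\lle)^{-1}\Z/\Z$ of that order.

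First I would filter everything by degree. Writing $J=\pico$ for the Jacobian, so that $J(k)=(\Pic^{0}\xs)^{\g}$, there are two short exact sequences
\[
0\to\Pic^{0}X\to\pic X\xrightarrow{\deg}I(\be X\lle)\Z\to 0,\qquad 0\to J(k)\to(\pic\le\xs\le)^{\g}\xrightarrow{\deg}P\Z\to 0,
\]
where $P$ is the positive generator of the image of $\deg$ on $(\pic\le\xs\le)^{\g}$, i.e. the \emph{period} of $X$; here $\deg(\pic X)=I(\be X\lle)\Z$ because closed points are effective divisors whose degrees have gcd $I(\be X\lle)$, and $P\mid I(\be X\lle)$ since $\pic X\subseteq(\pic\le\xs\le)^{\g}$. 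The inclusion of the first sequence into the second has injective vertical maps, so the snake lemma produces
\[
0\to C\to\bxk\to P\Z/I(\be X\lle)\Z\to 0,\qquad C=\cok\be[\e\Pic^{0}X\to J(k)\e].
\]
As $|P\Z/I(\be X\lle)\Z|=I(\be X\lle)/P$, the theorem reduces to the single identity $|C|=P$.

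The heart of the argument is this identity, and here the arithmetic of the local field enters. Restricting the differential of \eqref{brpic} to degree-zero classes identifies $C$ with the image of an obstruction map $\epsilon\colon J(k)\to\br k\isoto\Q/\Z$. I would then identify $\epsilon$, up to sign, with the character $a\mapsto\langle a,[\Pic^{1}_{X/k}]\rangle$, where $\langle\,,\,\rangle$ is Tate's local duality pairing transported through the canonical principal polarization $J\isoto J^{\vee}$ and $[\Pic^{1}_{X/k}]\in H^{1}_{\et}(k,J)$ is the class of the torsor of degree-one divisor classes. Since the connecting map of $0\to J\to\picxk\to\Z\to 0$ sends $1$ to $[\Pic^{1}_{X/k}]$ and has kernel $P\Z$, this class has order exactly $P$; perfectness of Tate's pairing then forces $\epsilon$ to be a character of order $P$, whose image $C$ is therefore the cyclic group of order $P$. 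Combining with the snake sequence gives $|\bxk|=|C|\cdot\big(I(\be X\lle)/P\big)=I(\be X\lle)$, as desired. (Note that this route delivers the order directly and does not require the equality of period and index as separate input.)

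The main obstacle is precisely the compatibility asserted above: that the Leray/Brauer obstruction map $\epsilon$ coincides, up to sign, with the cup-product pairing against $[\Pic^{1}_{X/k}]$ furnished by Tate duality. This is the step where the local field is genuinely used — through the nondegeneracy of Tate's pairing and the self-duality of $J$ — and it is what fails over a general base. A secondary technical point is to ensure that Tate's local duality is invoked in a form valid uniformly for both $p$-adic and equal-characteristic local fields, the latter requiring the duality theorem for abelian varieties over complete discretely valued fields with finite residue field.
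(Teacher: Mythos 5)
Your proposal is correct and is essentially the paper's own proof: the paper simply cites Lichtenbaum's Theorem 3 for $p\e$-adic $k$ and observes that the same argument works over local function fields once Tate's duality theorem for abelian varieties is replaced by Milne's extension of it, and Lichtenbaum's argument is precisely your degree filtration with its snake-lemma reduction to $|C|=P$, combined with the identification of the degree-zero obstruction map with the duality pairing against the class of $\Pic^{1}_{X/k}$ (of order $P$). The compatibility you flag as the main obstacle is exactly the content of Lichtenbaum's duality theorems (resp.\ of Milne's theorem in equal characteristic), which is what the paper's citation supplies.
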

\begin{proof} If $k$ is a $p\e$-adic field (i.e., a finite extension of $\Q_{\e p}$), then the above statement is equivalent to \cite[Theorem 3, p.~130]{lich} (see Remark \ref{bel} below). If $k$ is a local function field (i.e., a finite extension of $\mathbb{F}_{\be p}((t))$), then the proof is formally the same as that of \cite[Theorem 3, p.~130]{lich} using Milne's extension to the local function field case of Tate's duality theorem for abelian varieties over $p\,$-\e adic fields \cite[Theorem III.7.8, p.~285]{adt}. Here the relevant abelian variety is  $\picor\!$, which is indeed an abelian variety by the smoothness of $X$ \cite[Corollary 3.2]{fga236}.	
\end{proof}

\begin{remark} \label{bel} Regarding the statement of Theorem \ref{roq}\,, in \cite[Theorem 3, p.~130]{lich} the term {\it connected} and the group $\br(k(\lbe X)/\le k)$, where $k(\lbe X)$ is the function field of $X$, appear in place of the term {\it irreducible} and the group $\bxk$, respectively. Since $X$ is smooth, connectedness and irreducibility are equivalent concepts for $X$ \cite[Remark 6.37, p.~165]{gw}. Further, the pullback map $\br\le X\to \br\e k(\lbe X)$ is injective by \cite[Corollary 1.8, p.~170]{gb}. Thus Theorem \ref{roq} above (over a $p\e$-adic field) is indeed equivalent to \cite[Theorem 3, p.~130]{lich}.
\end{remark}

\smallskip

\begin{theorem} \label{rgn} Let $k$ be a non-archimedean local field and let $X$ be a proper and geometrically integral $k\e$-\le curve such that $X^{\lle\be N}$ is {\rm smooth} over $k$. Then
$\bxk$ is a group of order $I(\be X\lle)$.
\end{theorem}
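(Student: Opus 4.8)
The plan is to exhibit $X$ as a pinching of its normalization and thereby reduce the assertion to the smooth case treated in Theorem~\ref{roq}. First I would set $\widetilde X=X^{\be N}$ and record that $\widetilde X$ is smooth (by hypothesis), proper and geometrically integral---properness because the normalization morphism $\nu\colon\widetilde X\to X$ is finite and surjective, and geometric integrality by Lemma~\ref{bri}---and that it is an FA scheme, being a proper $k$-curve. The non-normal locus $Y$ of $X$ is finite, since $\nu$ is an isomorphism over a dense open subset of the integral curve $X$. Writing $\widetilde Y=Y\times_{X}\widetilde X$ and letting $\psi\colon\widetilde Y\to Y$ be the projection, the curve $X$ is obtained by pinching $\widetilde X$ along $\widetilde Y$ via $\psi$, so the hypotheses of Theorem~\ref{gbk2} hold. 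That theorem gives
\[
\bxk=\bxkp\cap\bigcap_{\e y\e\in\e Y}\br\lbe(\kappa(y)/k),
\]
the intersection being taken inside $\br\e k$.

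I would then evaluate each factor under the invariant isomorphism \eqref{inv}. Applied to the smooth, proper and geometrically irreducible curve $\widetilde X=X^{\be N}$, Theorem~\ref{roq} identifies $\bxkp=\br(\widetilde X/k)$ with the cyclic subgroup $I(\widetilde X\e)^{-1}\Z/\le\Z$ of $\Q/\Z$, while \eqref{inv2} identifies $\br\lbe(\kappa(y)/k)$ with $[\kappa(y)\colon\! k\e]^{-1}\Z/\le\Z$ for each $y\in Y$. Since $\Q/\Z$ has a unique subgroup of each finite order, the intersection of cyclic subgroups of orders $n$ and $m$ is the cyclic subgroup of order $\gcd(n,m)$; applying \eqref{inv} to the displayed formula therefore shows that $\bxk$ is finite and cyclic of order
\[
\gcd\!\left(I(\widetilde X\e),\,\{\le[\kappa(y)\colon\! k\e]\colon y\in Y\e\}\right).
\]

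The remaining task---and the step I expect to demand the most care---is to identify this integer with $I(\be X\lle)$. The two inputs are that the pinching morphism restricts to an isomorphism $\widetilde X\setminus\widetilde Y\isoto X\setminus Y$ preserving residue fields, and that $[\kappa(y)\colon\! k\e]$ divides $[\kappa(\widetilde y\,)\colon\! k\e]$ whenever $\widetilde y\in\widetilde Y$ lies over $y=\psi(\widetilde y\,)$. The first input shows that $I(\be X\lle)$ is the greatest common divisor of the degrees $[\kappa(y)\colon\! k\e]$ $(y\in Y)$ together with the degrees $[\kappa(\widetilde x)\colon\! k\e]$ $(\widetilde x\in\widetilde X\setminus\widetilde Y)$, whereas $I(\widetilde X\e)$ is the greatest common divisor of the latter degrees together with the degrees $[\kappa(\widetilde y\,)\colon\! k\e]$ $(\widetilde y\in\widetilde Y)$. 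A short divisibility argument now yields the claim: $I(\be X\lle)$ divides each $[\kappa(y)\colon\! k\e]$ and each $[\kappa(\widetilde x)\colon\! k\e]$, and by the second input it also divides each $[\kappa(\widetilde y\,)\colon\! k\e]$, so $I(\be X\lle)$ divides both $I(\widetilde X\e)$ and every $[\kappa(y)\colon\! k\e]$; conversely $\gcd(I(\widetilde X\e),\{[\kappa(y)\colon\! k\e]\})$ divides every generator of $I(\be X\lle)$. Hence the two integers coincide, and $\bxk$ has order $I(\be X\lle)$, as asserted.
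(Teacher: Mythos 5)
Your proposal is correct, and it follows the paper's own reduction almost to the end: you form the conductor square realizing $X$ as a pinching of $X^{\be N}\!$, apply Theorem \ref{gbk2}, and then evaluate the resulting intersection via Theorem \ref{roq} and \eqref{inv2} to conclude that $\bxk$ is cyclic of order $\gcd\bigl\{I(X^{\be N}), I(Y)\bigr\}$. The divergence is in the final identification of this integer with $I(\be X\lle)$. You prove the equality $\gcd\bigl\{I(X^{\be N}), I(Y)\bigr\}=I(\be X\lle)$ directly and purely geometrically, by splitting the closed points of $X$ and of $X^{\be N}$ along $Y$ and $\widetilde{Y}$, using that $\varphi$ is a residue-field-preserving isomorphism off the pinched locus and that $[\kappa(\widetilde{y}\,)\colon k]$ is a multiple of $[\kappa(y)\colon k]$ for $\widetilde{y}$ above $y$; this divisibility argument is complete and correct. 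The paper instead argues by a sandwich: $I(\be X\lle)$ divides $\gcd\bigl\{I(X^{\be N}), I(Y)\bigr\}=\#\bxk$ by \eqref{divv}, while $\#\bxk$ divides $I(\be X\lle)$ because $\bxk$ is annihilated by the index (the fact \eqref{ihr}), whence the two coincide. Your route yields the index identity as a standalone geometric fact requiring no Brauer-group input---something the paper obtains only as a byproduct of the computation---whereas the paper's route is shorter given the machinery already in place and avoids the closed-point bookkeeping.
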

\begin{proof} Recall the normalization morphism $\nu\colon X^{\lle\be N}\to X$ and let $Y$ be the non-normal locus of $X\le$. Since $X^{\lle\be N}$ is a proper and geometrically integral FA $k$-scheme (see the proof of Theorem \ref{sn}), the associated conductor square
\[
\xymatrix{X^{\lle\be N}\be\times_{\lbe X}\! Y \,\ar@{^{(}->}[r]\ar[d]_{\nu\le \times_{\lbe X}\lbe Y}&
X^{\lle\be N}\ar[d]_(.45){\nu}\\
Y\,\ar@{^{(}->}[r]&X
}
\]
is a pinching diagram of type \eqref{pin}. See \cite[Lemma 3.1 and its preamble]{lau} for more details. Now Theorems \ref{gbk2} and \ref{roq} and the isomorphism \eqref{inv2} show that the invariant map \eqref{inv} induces an isomorphism of abelian groups
\[
\bxk\isoto \left(\!I\!\lbe\left(\lbe X^{\lle\be N}\right)^{\!-1}\!\Z\lle/\le\Z\right)\cap \bigcap_{\e y\e\in\e Y}\!\left([\kappa(y)\colon\be k\e]^{-1}\Z/\le\Z\right)\be,
\]
where the intersections take place in $\Q/\Z$. Thus $\#\bxk=\gcd\{I\!\lbe\left(\be X^{\be N}\right)\be,I\lbe(Y\lle)\}$, which is divisible by $I(\be X\lle)$ \eqref{divv}. Since $\#\bxk$ divides $I(\be X\lle)$, we conclude that $\#\bxk=\gcd\{I\!\lbe\left(\be X^{\be N}\right)\be,I\lbe(Y\lle)\}=I(\be X\lle)$, as claimed.
\end{proof}

\begin{remark}\label{com} Let $k$ be any field of characteristic exponent $p$ and let $X$ be an algebraic $k$-scheme. Then $X^{\lle\be N}$ is geometrically normal if, and only if, $(\lbe X^{\lle\be N}\le)_{k^{1\lbe/\lbe p}}$ is normal (see \cite[${\rm IV}_{\be 4}$, Proposition 6.7.7]{ega} and \cite[Proposition 2.10(3)]{tan}). Thus the hypothesis of Theorem \ref{rgn} certainly holds if $k$ is a $p\e$-adic field but not, in general, if $k$ is a local function field (see Example \ref{ex3}). Thus the methods of this paper are insufficient for obtaining a precise formula for $\#\bxk$ (if such a formula exists) if $X$ is an arbitrary proper and geometrically integral curve over a local function field $k$.
\end{remark}

\end{document}